\newtheorem{theorem}{Theorem}[section]
\newtheorem{proposition}[theorem]{Proposition}
\newtheorem{corollary}[theorem]{Corollary}
\newtheorem{lemma}[theorem]{Lemma}
\theoremstyle{definition}
\newtheorem{definition}[theorem]{Definition}
\newtheorem{example}[theorem]{Example}
\theoremstyle{remark}
\newtheorem{remark}[theorem]{Remark}
\newcommand{\wt}{\mathrm{wt}}
\newcommand{\rk}{\mathrm{rk}}
\newcommand{\cl}{\mathrm{cl}}
\newcommand{\Rsupp}{\mathrm{Rsupp}}
\newcommand{\qbinom}[2]{\genfrac{[}{]}{0pt}{}{#1}{#2}}
\title{Defining the $q$-analogue of a matroid}
\author{R. Jurrius \and R. Pellikaan}
\begin{document}

\maketitle

\begin{abstract}
This paper defines the $q$-analogue of a matroid and establishes several properties like duality, restriction and contraction. We discuss possible ways to define a $q$-matroid, and why they are (not) cryptomorphic. Also, we explain the motivation for studying $q$-matroids by showing that a rank metric code gives a $q$-matroid. \\
Keywords: matroid theory, $q$-analogue, rank metric codes
\end{abstract}

This paper establishes the definition and several basic properties of $q$-matroids. Also, we explain the motivation for studying $q$-matroids by showing that a rank metric code gives a $q$-matroid. We give definitions of a $q$-matroid in terms of its rank function and independent spaces. The dual, restriction and contraction of a $q$-matroid are defined, as well as truncation, closure, and circuits. Several definitions and results are straightforward translations of facts for ordinary matroids, but some notions are more subtle. We illustrate the theory by some running examples and conclude with a discussion on further research directions involving $q$-matroids. \\

Many theorems in this article have a proof that is a straightforward $q$-analogue of the proof for the case of ordinary matroids. Although this makes them appear very easy, we feel it is needed to include them for completeness and also because it is not a guarantee that $q$-analogues of proofs exist.

\section{$q$-Analogues}

The $q$-analogue of the number $n$ is defined by
\[ [n]_q = 1 + q + \cdots + q^{n-1} = \frac{q^n-1}{q-1}. \]
This forms the basis of \emph{quantum calculus}, and we refer to Kac and Cheung \cite{kac:2002} for an introduction to the subject. In combinatorics, one can view the $q$-analogue as what happens if we generalize from a finite set to a finite dimensional vector space. The ``$q$'' in $q$-analogue does not only refer to quantum, but also to the size of a finite field. In the latter case, $[n]_q$ is the number of $1$-dimensional vector spaces of a vector space $\mathbb{F}_q^n$; but also in general, we can view $1$-dimensional subspaces of a finite dimensional space as the $q$-analogues of the elements of a finite set. In this text we keep in mind finite fields, because of applications, but we will consider finite dimensional vector spaces over both finite and infinite fields. \\

Most notions concerned with sets have a straightforward $q$-analogue, as given in the following table:

\begin{center}
\begin{tabular}{|c|c|}
\hline
finite set & finite dim space \\
\hline
element & $1$-dim subspace \\
$\emptyset$ & $\mathbf{0}$ \\
size & dimension \\
$n$ & $\frac{q^n-1}{q-1}$ \\
intersection & intersection \\
union & sum \\
\hline
\end{tabular}
\end{center}
Furthermore, the Newton binomial
\[ \binom{n}{k} = \frac{n!}{k! (n-k)!} = \frac{n\cdots (n-k+1)}{1\cdots k} \]
counts the number of subsets of $\{1, \ldots ,n\}$ of size $k$. The $q$-analogue is given by the Gaussian binomial, or $q$-binomial
\[ \qbinom{n}{k}_q = \frac{[n]_q!}{[k]_q! [n-k]_q!} = \frac{(q^n-1)\cdots (q^{n-k+1}-1)}{(q-1)\cdots (q^k-1)}. \]
If we consider $q$ as the size of a finite field, the $q$-binomial counts the number of subspaces of $\mathbb{F}_q^n$ of dimension $k$. For infinite fields, we get a polynomial in $q$ that can be considered as the counting polynomial of the Grassmann variety of $k$-dimensional subspaces of an $n$-dimensional vector space, see \cite{plesken:2009}.

In most cases, we can go from the $q$-analogue to the ``normal'' case by taking the limit for $q\to1$. This can also be viewed as projective geometry over the field $\mathbb{F}_1$, as is nicely explained by Cohn \cite{cohn:2004}. \\

Two notions that were not mentioned above, because they need a bit more caution, are the difference and the complement. When taking the difference $A-B$ of two subsets $A$ and $B$, we mean ``all elements that are in $A$ but not in $B$''. The $q$-analogue of this would be ``all $1$-dimensional subspaces that are in $A$ but not in $B$''. The problem is that when $A$ and $B$ are finite dimensional spaces, all these $1$-dimensional subspaces together do not form a subspace. Sometimes this is not a problem, as we will see for example in property (I3) later on. We have several options for $A-B$ as a subspace.  We can take a subspace $C$ with $C\cap B=\mathbf{0}$ and $C\oplus(A\cap B)=A$. However, this space is not uniquely defined. We can also take the orthogonal complement, but this has the disadvantage that $A\cap A^\perp$ can be non-trivial. Using the quotient space as a complement will lower the dimension of the ambient space, which makes it perfect for the definition of contraction but not very suitable for other purposes. \\
The solution to this problem is to use all options described above, depending on for which property of $A-B$ we need a $q$-analogue.

\section{Rank function}

Although it is not strictly necessary to know about matroids before defining their $q$-analogue, the subject probably makes a lot more sense with ordinary matroids in mind. A great resource on matroids is Oxley \cite{oxley:2011}. Another one, that we will follow for in our search for cryptomorphic definitions of a $q$-matroid and the proofs of their equivalence, is Gordon and McNulty \cite{gordon:2012}.

\begin{definition}
A $q$-matroid $M$ is a pair $(E,r)$ in which $E$ is a finite dimensional vector space over a field $\mathbb{F}$ and $r$ an integer-valued function defined on the subspaces of $E$, called the \emph{rank}, such that for all subspaces $A,B$ of $E$:
\begin{itemize}
\item[(r1)] $0\leq r(A)\leq\dim A$
\item[(r2)] If $A\subseteq B$, then $r(A)\leq r(B)$.
\item[(r3)] $r(A+B)+r(A\cap B)\leq r(A)+r(B)$
\end{itemize}
\end{definition}

Note that this definition is a straightforward $q$-analogue of the definition of a matroid in terms of its rank. In the same way, we define the following.

\begin{definition}
Let $M=(E,r)$ be a $q$-matroid and let $A$ be a subspace of $E$. If $r(A)=\dim A$, we call $A$ an \emph{independent space}. If not, $A$ is called \emph{dependent}. If $A$ is independent and $r(A)=r(E)$, we call $A$ a \emph{basis}. The \emph{rank of $M$} is denoted by $r(M)$ and is equal to $r(E)$. A $1$-dimensional subspace that is dependent, is called a \emph{loop}.
\end{definition}

These definitions might cause some confusion at first: we assign a rank to a subspace that has little to do with its dimension, and we call a complete subspace (in)dependent. However, we stick to these notions because they are a direct $q$-analogue of what happens in ordinary matroids. Before we go to an example, we prove a Lemma that will be used repeatedly.

\begin{lemma}\label{unit-rank-increase}
Let $(E,r)$ be a $q$-matroid. Let $A$ be a subspace of $E$ and let $x$ be a $1$-dimensional subspace of $E$. Then $r(A+x)\leq r(A)+1$.
\end{lemma}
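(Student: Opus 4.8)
The plan is to read this off directly from submodularity (r3), applied to the pair of subspaces $A$ and $x$. Since $A+x$ is the sum of $A$ and $x$ and $A\cap x$ is their intersection, (r3) gives immediately
\[ r(A+x) + r(A\cap x) \leq r(A) + r(x). \]
The whole proof is then a matter of bounding the two terms that are not $r(A+x)$ or $r(A)$.

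For the right-hand side, I would invoke (r1) on the $1$-dimensional subspace $x$: it yields $r(x)\leq \dim x = 1$. For the stray term on the left, (r1) applied to $A\cap x$ gives $r(A\cap x)\geq 0$. Substituting both bounds into the submodular inequality produces $r(A+x) \leq r(A) + r(x) - r(A\cap x) \leq r(A) + 1 - 0 = r(A)+1$, which is the claim.

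There is essentially no obstacle here; the only thing worth a remark is that the argument does not need a case distinction. If $x\subseteq A$ then $A+x=A$ and the inequality is trivial (indeed with a strict gap), but the submodular computation above already covers this uniformly, since in that case $A\cap x=x$ and the two "error" terms simply cancel more tightly. So I would present it as a single three-line deduction from (r1) and (r3), in keeping with the paper's stated philosophy that these basic facts are direct $q$-analogues of the matroid case (where the identical proof shows $r(A\cup\{e\})\leq r(A)+1$).
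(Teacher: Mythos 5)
Your proof is correct and follows essentially the same route as the paper: apply (r3) to the pair $A$, $x$, bound $r(x)\leq 1$ by (r1), and discard the $r(A\cap x)$ term. If anything, your version is slightly more careful, since you only use $r(A\cap x)\geq 0$ rather than asserting it equals $0$ (which the paper does, and which is only automatic when $x\not\subseteq A$), so no case distinction is needed.
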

\begin{proof}
First note that for any $q$-matroid $r(\mathbf{0})=0$ and $r(x)$ is either $0$ or $1$, by (r1). Now apply property (r3) to $A$ and $x$:
\begin{eqnarray*}
r(A+x) & = & r(A+x) + 0 \\
 & = & r(A+x)+r(A\cap x) \\
 & \leq & r(A)+r(x) \\
 & \leq & r(A)+1.
\end{eqnarray*}
\end{proof}

\begin{example}\label{ex-uniform}
Let $E$ be a finite dimensional vector space of dimension $n$. Let $0\leq k\leq n$ be an integer. Define a function $r$ on the subspaces of $E$ as follows:
\[ r(A)=\left\{ \begin{array}{ll}
\dim A & \text{if }\dim A\leq k \\
k & \text{if }\dim A>k
\end{array} \right. \]
To show that $(E,r)$ is a $q$-matroid, we have to show that $r$ satisfies the properties (r1),(r2),(r3). First of all, $r$ is an integer valued function. It is clear from the definition of $r$ that (r1) and (r2) hold. For (r3), let $A,B$ be subspaces of $E$. We distinguish three cases, depending on the dimensions of $A$ and $B$. \\
If $r(A)=\dim A$ and $r(B)=\dim B$, then the definition of $r$ implies that $r(A\cap B)=\dim A\cap B$. By the modularity of dimension and (r2) it follows that
\begin{eqnarray*}
r(A+B)+r(A\cap B) & = & r(A+B)+\dim A\cap B \\
 & = & r(A+B)+\dim A+\dim B-\dim(A+B) \\
 & = & r(A+B)+r(A)+r(B)-\dim(A+B) \\
 & \leq & r(A)+r(B).
\end{eqnarray*}
If $r(A)=r(B)=k$, this implies that also $r(A+B)=k$. Since $r(A\cap B)\leq k$ by definition, we have that
\begin{eqnarray*}
r(A+B)+r(A\cap B) & \leq & k+k \\
 & = & r(A)+r(B).
\end{eqnarray*}
Finally, let $r(A)=\dim A$ and $r(B)=k$. Since $\dim B\geq k$, we also have that $\dim A+B\geq k$, hence $r(A+B)=k$.
\begin{eqnarray*}
r(A+B)+r(A\cap B) & = & k+r(A\cap B) \\
 & \leq & k+\dim A\cap B \\
 & \leq & k+\dim A \\
 & \leq & r(B)+r(A).
\end{eqnarray*}
We conclude that $(E,r)$ is indeed a $q$-matroid. We call it the \emph{uniform $q$-matroid} and denote it by $U_{k,n}$. Its independent spaces are all subspaces of dimension at most $k$, and its bases are all subspaces of dimension $k$.
\end{example}

The following two Propositions can be viewed as a variation of (r3). We will use them in later proofs.

\begin{proposition}\label{p-rank1}
Let $r$ be the rank function of a $q$-matroid $(E,r)$ and let $A,B$ be subspaces of $E$. Suppose $r(A+x)=r(A)$ for all $1$-dimensional subspaces $x\subseteq B$, $x\not\subseteq A$. Then $r(A+B)=r(A)$.
\end{proposition}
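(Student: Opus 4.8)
The plan is to induct on the codimension $d = \dim(A+B) - \dim A$ of $A$ inside $A+B$. If $d=0$ then $A+B=A$ and there is nothing to prove. So assume $d\geq 1$; then $B\not\subseteq A$, so I can pick a $1$-dimensional subspace $x\subseteq B$ with $x\not\subseteq A$ and set $A_1 = A+x$. The hypothesis gives $r(A_1)=r(A)$, and since $x\subseteq B$ we have $A_1+B = A+B$ while $\dim A_1 = \dim A + 1$, so $\dim(A_1+B)-\dim A_1 = d-1$. Hence, once I know that the hypothesis of the Proposition is inherited by the pair $(A_1,B)$, the inductive hypothesis yields $r(A+B)=r(A_1+B)=r(A_1)=r(A)$, which is what we want.

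The core of the argument is therefore to show: for every $1$-dimensional subspace $y\subseteq B$ with $y\not\subseteq A_1$ we have $r(A_1+y)=r(A_1)$. Since $y\not\subseteq A_1\supseteq A$, in particular $y\not\subseteq A$, so the original hypothesis applies and gives $r(A+y)=r(A)$; we also have $r(A+x)=r(A)$. Now I would apply submodularity (r3) to the subspaces $A+x$ and $A+y$: their sum is $A+x+y = A_1+y$, and their intersection contains $A$, hence has rank at least $r(A)$ by (r2). This gives $r(A_1+y) + r(A) \leq r(A+x)+r(A+y) = 2r(A)$, so $r(A_1+y)\leq r(A)$; combining with $r(A_1+y)\geq r(A_1)=r(A)$ from (r2) forces $r(A_1+y)=r(A_1)$.

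I expect the only real subtlety — and the step to get right — is precisely this transfer of the hypothesis from $A$ to $A_1=A+x$: one must observe that every line $y$ that is ``new'' over $A_1$ was already a line over $A$ covered by the hypothesis, and then feed the two resulting rank equalities into (r3) in the correct order. The remaining ingredients (existence of the line $x$, the bookkeeping $A_1+B=A+B$ and $\dim A_1=\dim A+1$, and the base case $d=0$) are routine; note that Lemma~\ref{unit-rank-increase} is not even needed here, as the easy reverse inequalities come directly from monotonicity (r2).
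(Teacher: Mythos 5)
Your proof is correct. It follows the same basic strategy as the paper---induction on $\dim(A+B)-\dim A$ (the paper phrases the induction quantity as $\dim B-\dim(A\cap B)$, which is the same number by modularity), with submodularity (r3) applied to two spaces whose intersection contains $A$---but the inductive bookkeeping is organized differently. The paper keeps $A$ fixed, fixes lines $x_1,\ldots,x_k$ with $A+x_1+\cdots+x_k=A+B$, takes $r(A+x_1+\cdots+x_{k-1})=r(A)$ as the inductive hypothesis, and applies (r3) to the pair $A+x_1+\cdots+x_{k-1}$ and $A+x_k$. You instead absorb one line into the left argument, forming $A_1=A+x$, and verify that the hypothesis of the Proposition is inherited by the pair $(A_1,B)$; that verification (from $r(A+x)=r(A+y)=r(A)$ deduce $r(A+x+y)=r(A)$) is exactly the content of Proposition~\ref{p-rank2}, which you re-derive inline. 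Your organization has the mild advantage that the inductive step is a literal application of the Proposition as stated to a pair of strictly smaller codimension, and it exhibits Proposition~\ref{p-rank1} as a formal consequence of Proposition~\ref{p-rank2}, whereas the paper proves the two independently (and in the opposite order). Both arguments are equally elementary and rest on the same key observation about the rank of the intersection in (r3).
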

\begin{proof}
We prove this by induction on $k=\dim B-\dim(A\cap B)$. Let $\{x_1,\ldots,x_k\}$ be $1$-dimensional subspaces of $E$ that are in $B$ but not in $A$ such that $A+x_1+\cdots+x_k=A+B$. So the $x_i$ are generated by linearly independent vectors. Note that $k$ is finite, since $\dim(A+B)$ is finite and $k\leq\dim(A+B)$. If $k=0$, then $B\subseteq A$ so clearly $r(A+B)=r(A)$. \\
Now assume that $r(A+x_1+\cdots+x_t)=r(A)$ for all $t<k$.
We have to show that $r(A+x_1+\cdots+x_k)=r(A)$. By (r2) we have that $r(A)\leq r(A+x_1+\cdots+x_k)$.
By (r3) we have that
\begin{multline*}
r((A+x_1+\cdots+x_{k-1})+(A+x_k))+r((A+x_1+\cdots+x_{k-1})\cap(A+x_k)) \\
\leq r(A+x_1+\cdots+x_{k-1})+r(A+x_k)
\end{multline*}
which is equal to
\[ r(A+x_1+\cdots+x_k)+r(A)\leq r(A)+r(A) \]
and thus $r(A+x_1+\cdots+x_k)\leq r(A)$. We conclude that equality holds, and since $A+x_1+\cdots+x_k=A+B$, this proves the statement.
\end{proof}

\begin{proposition}\label{p-rank2}
Let $r$ be the rank function of a $q$-matroid $(E,r)$, let $A$ be a subspace of $E$ and let $x,y$ be $1$-dimensional subspaces of $E$. Suppose $r(A+x)=r(A+y)=r(A)$. Then $r(A+x+y)=r(A)$.
\end{proposition}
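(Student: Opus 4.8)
The plan is to apply the submodularity axiom (r3) directly to the pair of subspaces $A+x$ and $A+y$. It is worth noting first why the seemingly relevant Proposition~\ref{p-rank1} does not settle this immediately: to deduce $r(A+(x+y))=r(A)$ from it one would need $r(A+z)=r(A)$ for \emph{every} $1$-dimensional subspace $z\subseteq x+y$ with $z\not\subseteq A$, but the hypothesis only provides this for $z=x$ and $z=y$, while $x+y$ may be two-dimensional and contain many further lines. So a short separate argument is needed.

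First I would apply (r3) to $A+x$ and $A+y$, giving
\[ r\bigl((A+x)+(A+y)\bigr) + r\bigl((A+x)\cap(A+y)\bigr) \;\le\; r(A+x) + r(A+y). \]
Next I would rewrite the left-hand side: $(A+x)+(A+y)=A+x+y$, and since $A\subseteq (A+x)\cap(A+y)$, axiom (r2) gives $r(A)\le r\bigl((A+x)\cap(A+y)\bigr)$. Substituting the hypotheses $r(A+x)=r(A+y)=r(A)$ then yields
\[ r(A+x+y) + r(A) \;\le\; r(A+x+y) + r\bigl((A+x)\cap(A+y)\bigr) \;\le\; 2\,r(A), \]
so $r(A+x+y)\le r(A)$. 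Finally, $A\subseteq A+x+y$ together with (r2) gives the reverse inequality $r(A)\le r(A+x+y)$, and hence equality.

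I do not expect any real obstacle here; the only point requiring care is the one flagged above, namely resisting the temptation to invoke Proposition~\ref{p-rank1}, and instead noting that $(A+x)\cap(A+y)$ need not equal $A$ but certainly contains it, which is all the argument uses.
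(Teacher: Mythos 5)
Your proof is correct and follows essentially the same route as the paper: apply (r3) to the pair $A+x$ and $A+y$, identify the sum as $A+x+y$, and close with (r2). Your handling of the intersection term is in fact slightly more careful than the paper's, which simply writes $r\bigl((A+x)\cap(A+y)\bigr)=r(A)$; your observation that $A\subseteq(A+x)\cap(A+y)$ plus (r2) is all that is needed cleanly covers the degenerate cases (e.g.\ $A+x=A+y$) without further case analysis.
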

\begin{proof}
Applying (r3) to $A+x$ and $A+y$ gives the following equivalent statements:
\begin{eqnarray*}
r((A+x)+(A+y))+r((A+x)\cap(A+y)) & \leq & r(A+x)+r(A+y) \\
r(A+x+y)+r(A) & \leq & r(A)+r(A) \\
r(A+x+y) & \leq & r(A).
\end{eqnarray*}
On the other hand, by (r2) we have that $r(A)\leq r(A+x+y)$, so equality must hold.
\end{proof}

We end this section with a remark about the difference between matroids and $q$-matroids. Let $(\mathbb{F}_q^n,r)$ be a $q$-matroid defined over a finite field. Let $X$ be the set of $1$-dimensional subspaces of $E$ and define a function on the subsets of $X$ as follows:
\[ \rho(A)=r(\langle A\rangle), \]
that is, we take the rank in the $q$-matroid of the span of $A$. Then it is not difficult to show that $(X,\rho)$ is a matroid. However, this matroid behaves a lot different from the $q$-matroid that we started with. For example, it has $\frac{q^n-1}{q-1}$ elements and rank $n$, which means its rank is very low in comparison to its cardinality. Also, if we take the usual duality, we do not get the dual $q$-matroid (that we define later) because the complement of a subspace in $X$ is not a subspace. Similar remarks hold for restriction and contraction, as well as for the link with rank metric codes. In short, by changing to the matroid $(X,\rho)$, we lose a lot of the structure of the $q$-matroid $(\mathbb{F}_q^n,r)$.

\section{Independent spaces}

Now that we have defined a $q$-matroid in terms of its rank function, a logical question is to ask if we could also define it in terms of its independent spaces, bases, etcetera. Unfortunately, the answer to this question is not as easy as just taking the $q$-analogues of cryptomorphic definitions of an ordinary matroid. The goal of this section is to establish the next cryptomorphic definition of a $q$-matroid.

\begin{theorem}\label{indep-rank}
Let $E$ be a finite dimensional space. If $\mathcal{I}$ is a family of subspaces of $E$ that satisfies the conditions:
\begin{itemize}
\item[(I1)] $\mathcal{I}\neq\emptyset$.
\item[(I2)] If $J\in\mathcal{I}$ and $I\subseteq J$, then $I\in\mathcal{I}$.
\item[(I3)] If $I,J\in\mathcal{I}$ with $\dim I<\dim J$, then there is some $1$-dimensional subspace $x\subseteq J$, $x\not\subseteq I$ with $I+x\in\mathcal{I}$.
\item[(I4)] Let $A,B\subseteq E$ and let $I,J$ be maximal independent subspaces of $A$ and $B$, respectively. Then there is a maximal independent subspace of $A+B$ that is contained in $I+J$.
\end{itemize}
and  $r$ is the function defined by $r_{\mathcal{I}}(A)=\max\{\dim I:I\in\mathcal{I},I\subseteq A\}$  for all $A\subseteq E$, 
then $(E,r_{\mathcal{I}})$ is a $q$-matroid and  its family of independent spaces is equal to $\mathcal{I}$.\\
Conversely, if $\mathcal{I}_r$ is the family of independent spaces of a $q$-matroid $(E,r)$, 
then $\mathcal{I}_r$ satisfies the conditions (I1),(I2),(I3),(I4) and $r=r_{\mathcal{I}_r}$.
\end{theorem}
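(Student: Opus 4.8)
The plan is to prove the two implications separately, relying throughout on one preliminary observation: (I3) implies that any two maximal independent subspaces of a fixed subspace $A$ (maximal with respect to inclusion) have the same dimension — if $\dim I<\dim I'$ then (I3) produces a $1$-dimensional $x\subseteq I'\subseteq A$ with $I+x\in\mathcal{I}$, contradicting maximality of $I$ — and that (I1) together with (I2) forces $\mathbf{0}\in\mathcal{I}$. For the first implication, (r1) and (r2) are then immediate from the definition of $r_{\mathcal{I}}$.

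The heart of the first implication is submodularity (r3), and this is exactly where (I4) enters. Given subspaces $A,B$, I would fix a maximal independent subspace $I_0$ of $A\cap B$ and, applying (I3) repeatedly, extend it to maximal independent subspaces $I\supseteq I_0$ of $A$ and $J\supseteq I_0$ of $B$ (each extension stays inside $A$, respectively $B$, because the $1$-dimensional spaces one adds are taken from a maximal independent subspace of $A$, respectively $B$). Since $I\cap J$ is an independent subspace of $A\cap B$ containing $I_0$, the preliminary observation forces $I\cap J=I_0$. Now (I4) yields a maximal independent subspace $K\subseteq I+J$ of $A+B$, and
\[ r_{\mathcal{I}}(A+B)=\dim K\leq\dim(I+J)=\dim I+\dim J-\dim(I\cap J)=r_{\mathcal{I}}(A)+r_{\mathcal{I}}(B)-r_{\mathcal{I}}(A\cap B), \]
which is (r3). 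That the independent spaces of $(E,r_{\mathcal{I}})$ are precisely $\mathcal{I}$ is then immediate: $I\in\mathcal{I}$ gives $r_{\mathcal{I}}(I)=\dim I$ by taking $I$ itself in the maximum, while $r_{\mathcal{I}}(A)=\dim A$ forces a member of $\mathcal{I}$ of dimension $\dim A$ inside $A$, which must be $A$ itself.

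For the converse, write $\mathcal{I}_r$ for the family of independent spaces of a $q$-matroid $(E,r)$. Condition (I1) holds since $r(\mathbf{0})=0$. For (I2), given $I\subseteq J$ with $r(J)=\dim J$, I would pick a complement $C$ of $I$ in $J$ and apply (r3) to $I$ and $C$, obtaining $\dim J=r(J)\leq r(I)+r(C)\leq r(I)+\dim C$, hence $r(I)\geq\dim I$, which with (r1) gives $r(I)=\dim I$. For (I3), if no suitable $x$ existed then $r(I+x)=r(I)$ for every $1$-dimensional $x\subseteq J$ with $x\not\subseteq I$, so Proposition~\ref{p-rank1} would give $r(I+J)=r(I)=\dim I<\dim J\leq r(I+J)$, a contradiction. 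Here I would also record the auxiliary fact, proved the same way, that a maximal independent subspace $I$ of any subspace $A$ satisfies $r(I)=r(A)$: otherwise $r(I+x)=r(I)$ for every $1$-dimensional $x\subseteq A$ (for $x\not\subseteq I$ this is maximality of $I$), and Proposition~\ref{p-rank1} applied to $I$ and $A$ gives $r(A)=r(I)$. The identity $r=r_{\mathcal{I}_r}$ then follows at once: $r_{\mathcal{I}_r}(A)\leq r(A)$ because every independent $I\subseteq A$ has $\dim I=r(I)\leq r(A)$, and equality holds by taking $I$ a maximal independent subspace of $A$.

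The main obstacle is (I4) for $\mathcal{I}_r$. Given maximal independent subspaces $I$ of $A$ and $J$ of $B$, the plan is to prove $r(A+B)=r(I+J)$ and then take $K$ a maximal independent subspace of $I+J$: by the auxiliary fact $\dim K=r(I+J)=r(A+B)$, and a dimension count forces $K$ to be a maximal independent subspace of $A+B$ contained in $I+J$. To prove $r(A+B)=r(I+J)$ I would argue in two stages. First, $r(I+B)=r(A+B)$: for each $1$-dimensional $x\subseteq A$, applying (r3) to $I+x$ and $I+B$ and using $r(I+x)=r(I)$ together with $I\subseteq(I+x)\cap(I+B)$ gives $r(I+B+x)\leq r(I+B)$, hence equality, and Proposition~\ref{p-rank1} upgrades this to $r(A+B)=r(I+B)$. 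Second, $r(I+B)=r(I+J)$ follows symmetrically: for each $1$-dimensional $y\subseteq B$, apply (r3) to $J+y$ and $I+J$, use $r(J+y)=r(J)$ and $J\subseteq(J+y)\cap(I+J)$ to get $r(I+J+y)=r(I+J)$, then apply Proposition~\ref{p-rank1} to $I+J$ and $B$, noting that $I+J+B=I+B$. The delicate part is pure bookkeeping: keeping straight which maximal independent subspace is being enlarged, and the repeated, mildly non-obvious use of submodularity to collapse the rank of a sum onto the rank of its ``independent part''.
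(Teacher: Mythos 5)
Your proof is correct, and its overall architecture matches the paper's: the same three-part cryptomorphism, the same complement trick for (I2), Proposition~\ref{p-rank1} for (I3), and for (r3) the same device of extending a maximal independent subspace $I_0$ of $A\cap B$ to maximal independent subspaces of $A$ and of $B$ and then invoking (I4). The one place you genuinely diverge is the hardest step, namely that the independent spaces of a $q$-matroid satisfy (I4). The paper builds this through Propositions~\ref{p-maxindep1}--\ref{p-maxindep3}: first the case of adding a single $1$-dimensional space (proved by repeatedly stripping $1$-dimensional subspaces off $A$ and invoking Proposition~\ref{p-rank2}, ending with a somewhat informal ``repeat the procedure until\dots''), then an induction on $\dim B$, then the symmetric combination. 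You instead prove the two rank identities $r(A+B)=r(I+B)$ and $r(I+B)=r(I+J)$ directly, each by one application of (r3) to the pair $I+x$ and $I+B$ (respectively $J+y$ and $I+J$), using that $r(I+x)=r(I)$ for every $1$-dimensional $x\subseteq A$ by maximality of $I$, and then upgrading via Proposition~\ref{p-rank1}; I checked the two submodularity computations and they are sound (note you do use $J\subseteq B$ to get $I+J+B=I+B$ in the second stage). This is shorter and tighter than the published route and avoids its least rigorous moment; what the paper's route buys in exchange is the stronger standalone statement of Proposition~\ref{p-maxindep1} (a maximal independent subspace of $A+x$ sitting inside $I+x$), which it reuses in the discussion that follows, whereas your argument yields only the rank identities needed for (I4) itself. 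In a written version, do make explicit your preliminary observation that every maximal-by-inclusion member of $\mathcal{I}$ inside a fixed subspace has the maximum dimension, since it is what licenses both the identification $I\cap J=I_0$ in (r3) and the final dimension count showing that $K\subseteq I+J$ is maximal independent in $A+B$.
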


The first three properties are a direct $q$-analogue of the axioms we use when we define an ordinary matroid in terms of its independent sets. The property (I4) however is really needed, as the next example and counter example show.

\begin{example}\label{ex-2}
Let $E=\mathbb{F}_2^4$ and let $\mathcal{I}$ be the set of all subspaces of dimension at most $2$ that do not contain the $1$-dimensional space $\langle0001\rangle$. Now $\mathcal{I}$ is not empty, so it satisfies (I1). If a space does not contain $\langle0001\rangle$, then all its subspaces also do not contain $\langle0001\rangle$, hence (I2) holds. For (I3), the interesting case is to check for $\dim I=1$ and $\dim J=2$, with $I\not\subseteq J$. From all the three $1$-dimensional spaces $x$ in $J$, there can only be one such that $I+x$ contains $\langle0001\rangle$, hence we have proved (I3). We will see in the next section that $\mathcal{I}$ is indeed the family of independent subspaces of a $q$-matroid.
\end{example}

\begin{example}\label{counterexample}
Let $E=\mathbb{F}_2^4$ and let $\mathcal{I}$ be the family consisting of
\[ I=\left\langle\begin{array}{cccc} 1 & 0 & 0 & 1 \\ 0 & 1 & 1 & 0 \end{array}\right\rangle \]
and all its subspaces. It is not difficult to see that $\mathcal{I}$ satisfies (I1),(I2),(I3): in fact, $\mathcal{I}$ is the family of independent spaces of the uniform $q$-matroid $U_{2,2}$ embedded into the space $E$. Consider the subspaces
\[ A=\left\langle\begin{array}{cccc} 1 & 0 & 0 & 0 \\ 0 & 1 & 0 & 0 \\ 0 & 0 & 1 & 0 \end{array}\right\rangle, \quad B=\left\langle\begin{array}{cccc} 0 & 1 & 0 & 0 \\ 0 & 0 & 1 & 0 \\ 0 & 0 & 0 & 1 \end{array}\right\rangle. \]
Both $A$ and $B$ have $\langle0110\rangle$ as a maximal independent subspace. But $A+B=E$ has $I$ as a maximal independent subspace, and $I$ is not contained in $\langle0110\rangle$. So $\mathcal{I}$ does not satisfy (I4). \\
Let $r_\mathcal{I}$ be the rank function defined in Theorem \ref{indep-rank}. Then
\[ r_\mathcal{I}(A+B)+r_\mathcal{I}(A\cap B)=2+1>1+1=r_\mathcal{I}(A)+r_\mathcal{I}(B), \]
so property (r3) does not hold for $r_\mathcal{I}$ and $\mathcal{I}$ is not the family of independent spaces of a $q$-matroid.
\end{example}

In order to understand why we need the extra axiom (I4), let us investigate a bit what goes wrong in the counter example.

\begin{lemma}\label{loopsum}
Let $x$ and $y$ be loops of a $q$-matroid. Then the space $x+y$ has rank $0$.
\end{lemma}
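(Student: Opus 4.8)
The plan is to first unwind what ``loop'' means and then invoke a rank inequality we already have. By definition a loop is a $1$-dimensional subspace that is dependent, so if $x$ is a loop then $r(x)\neq\dim x=1$; combined with (r1), which gives $0\leq r(x)\leq 1$, this forces $r(x)=0$. Hence both hypotheses really say $r(x)=r(y)=0$.

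From there I see two essentially equivalent routes. The slickest is to apply Proposition \ref{p-rank2} with $A=\mathbf{0}$: recall (as noted in the proof of Lemma \ref{unit-rank-increase}) that $r(\mathbf{0})=0$, and then $r(\mathbf{0}+x)=r(x)=0=r(\mathbf{0})$ and likewise $r(\mathbf{0}+y)=r(\mathbf{0})$, so the proposition yields $r(\mathbf{0}+x+y)=r(\mathbf{0})=0$, i.e.\ $r(x+y)=0$. Alternatively, one can argue directly from (r3) applied to the subspaces $x$ and $y$:
\[ r(x+y)+r(x\cap y)\leq r(x)+r(y)=0, \]
and since by (r1) both terms on the left are nonnegative, each must vanish; in particular $r(x+y)=0$.

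There is essentially no obstacle in this lemma: the only points requiring any care are correctly reading off $r(x)=r(y)=0$ from the definition of a loop together with (r1), and remembering that $r(\mathbf{0})=0$. I would most likely present the direct (r3) computation for self-containedness (it also incidentally shows $x\cap y$ has rank $0$), possibly remarking that it is the special case $A=\mathbf{0}$ of Proposition \ref{p-rank2}.
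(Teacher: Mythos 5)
Your direct (r3) computation is exactly the paper's proof: it applies (r3) to $x$ and $y$, notes $r(x)+r(y)=0$, and concludes $r(x+y)=0$ via nonnegativity from (r1). The proposal is correct and takes essentially the same approach.
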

\begin{proof}
Apply property (r3) to $x$ and $y$:
\begin{eqnarray*}
r(x+y) & = & r(x+y)+0 \\
 & = & r(x+y)+r(x\cap y) \\
 & \leq & r(x)+r(y) \\
 & = & 0.
\end{eqnarray*}
By (r1), it follows that $r(x+y)=0$.
\end{proof}

Or in other words: loops come in subspaces. This Lemma might look trivial, but it is exactly what goes wrong in Example \ref{counterexample}. Take the loops $\langle1000\rangle$ and $\langle0001\rangle$: their sum has rank $1$. The difference with ordinary matroids is that for sets, $A\cup B$ contains only elements that were already in either $A$ or $B$. In the $q$-analogue this is not true: the space $A+B$ contains $1$-dimensional subspaces that are in neither $A$ nor $B$. Therefore, it is ``more difficult'' to bound $r(A+B)$, making it also more difficult for property (r3) to hold.

\begin{remark}\label{r-axioms}
Let $\mathcal{I}$ be the family of independent spaces of a $q$-matroid with ground space $E$. Embed $\mathcal{I}$ in a space $E^\prime$ with $\dim E^\prime>\dim E$, resulting in a family $\mathcal{I}^\prime$. Then $\mathcal{I}^\prime$ is \emph{not} the family of independent spaces of a $q$-matroid over $E^\prime$. This is because all $1$-dimensional spaces that are in $E^\prime$ but not in $E$ are loops, but they do not form a subspace: this contradicts Lemma \ref{loopsum}. It follows that a set of axioms for $\mathcal{I}$ that is invariant under embedding can never be a full set of axioms that defines a $q$-matroid.
\end{remark}

Again, if we look back at Example \ref{counterexample}, we see that this counter example was created by embedding a uniform matroid in a space of bigger dimension. So in order to completely determine a $q$-matroid in terms of its independent spaces, we need an extra axiom that regulates how the spaces in $\mathcal{I}$ interact with the other subspaces of the $q$-matroid. This is what the axiom (I4) does. We will now prove in three steps that (I4) holds for every $q$-matroid.

\begin{proposition}\label{p-maxindep1}
Let $(E,r)$ be a $q$-matroid. Let $A\subseteq E$ and let $I$ be a maximal independent subspace of $A$. Let $x\subseteq E$ be a $1$-dimensional space. Then there is a maximal independent subspace of $A+x$ that is contained in $I+x$.
\end{proposition}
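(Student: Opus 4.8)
The plan is to argue by cases on the value of $r(A+x)$. By Lemma~\ref{unit-rank-increase} we have $r(A+x)\le r(A)+1$, and by (r2) we have $r(A+x)\ge r(A)$, so there are only two cases: $r(A+x)=r(A)$ and $r(A+x)=r(A)+1$. Before treating them I would record the following basic fact, proved using Lemma~\ref{unit-rank-increase} and Proposition~\ref{p-rank1}: \emph{a subspace $I\subseteq A$ is a maximal independent subspace of $A$ if and only if $I$ is independent and $\dim I=r(A)$}. The ``if'' direction is immediate from (r1) and (r2). For ``only if'', suppose $I$ is maximal independent in $A$ but $\dim I=r(I)<r(A)$; then maximality forces $I+y$ to be dependent, hence (by Lemma~\ref{unit-rank-increase} and (r2)) $r(I+y)=r(I)$, for every line $y\subseteq A$ with $y\not\subseteq I$. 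Applying Proposition~\ref{p-rank1} to $I$ and $A$ then gives $r(A)=r(I+A)=r(I)$, a contradiction. In particular $\dim I=r(A)$ for the $I$ in the statement, and moreover $r(I+y)=r(I)$ holds for \emph{every} line $y\subseteq A$ (trivially if $y\subseteq I$, by the above otherwise); I will use this repeatedly.

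In the first case, $r(A+x)=r(A)$ (which includes $x\subseteq A$), the subspace $I$ itself is independent, contained in $A+x$, and of dimension $r(A)=r(A+x)$; by the basic fact it is then a maximal independent subspace of $A+x$, and of course $I\subseteq I+x$. So $I$ is the desired space.

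In the second case, $r(A+x)=r(A)+1$, so necessarily $x\not\subseteq A$ and $\dim(I+x)=\dim I+1=r(A)+1=r(A+x)$; by the basic fact it therefore suffices to prove that $I+x$ is independent, i.e.\ that $r(I+x)=\dim I+1$. Suppose not; then (using (r2) and Lemma~\ref{unit-rank-increase}) $r(I+x)=r(I)$. I claim this implies $r(I+z)=r(I)$ for \emph{every} line $z\subseteq A+x$ with $z\not\subseteq I$. For $z\subseteq A$ this is the basic fact. For $z\not\subseteq A$, write $z=\langle a+v\rangle$ with $v$ spanning $x$ and $a\in A$; if $a=0$ then $z=x$ and we are done by assumption, while if $a\neq0$ then $z\subseteq\langle a\rangle+x$, so from $r(I+\langle a\rangle)=r(I)$ and $r(I+x)=r(I)$ Proposition~\ref{p-rank2} gives $r(I+\langle a\rangle+x)=r(I)$, whence $r(I+z)=r(I)$ by (r2) and monotonicity. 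With the claim established, Proposition~\ref{p-rank1} applied to $I$ and $A+x$ yields $r(A+x)=r(I+(A+x))=r(I)=r(A)$, contradicting $r(A+x)=r(A)+1$. Hence $I+x$ is independent, and by the basic fact it is a maximal independent subspace of $A+x$ contained in $I+x$.

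The main obstacle is the final step of the second case: to invoke Proposition~\ref{p-rank1} one must control $r(I+z)$ for \emph{all} lines $z$ of $A+x$, and in the $q$-setting $A+x$ contains many lines lying in neither $A$ nor $x$ (exactly the phenomenon flagged after Lemma~\ref{loopsum}). The resolution is to observe that every such line sits inside $\langle a\rangle+x$ for some line $\langle a\rangle\subseteq A$ and to use the two-line stability statement Proposition~\ref{p-rank2} together with monotonicity (r2); everything else is bookkeeping with (r1) and (r2).
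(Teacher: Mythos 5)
Your proof is correct, but it is organized differently from the paper's. The paper's proof fixes the case $r(A+x)=r(A)+1$ and then descends: it writes $A=A'+y$ with $I\subseteq A'$, uses Proposition~\ref{p-rank2} to push the rank jump from $A+x$ down to $A'+x$, and iterates (``we keep doing this until we arrive at $r(I+x)>r(A)$''), never leaving the chain of subspaces between $I$ and $A$. You instead argue by contradiction in one global step: assuming $r(I+x)=r(I)$, you verify the hypothesis of Proposition~\ref{p-rank1} for the pair $(I,\,A+x)$ --- the only delicate point being the lines of $A+x$ contained in neither $A$ nor $x$, which you trap inside $\langle a\rangle+x$ and dispose of with Proposition~\ref{p-rank2} and (r2) --- and conclude $r(A+x)=r(I)=r(A)$, contradicting the rank jump. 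The toolkit (Lemma~\ref{unit-rank-increase}, Propositions~\ref{p-rank1} and~\ref{p-rank2}) is the same, so the arguments are close cousins, but yours buys two things: it replaces the paper's informal iteration with a single clean application of Proposition~\ref{p-rank1}, and it makes explicit the fact --- used only implicitly in the paper when it asserts $r(A)=r(A')$ --- that a maximal independent subspace of $A$ has dimension exactly $r(A)$. Your treatment of the ``cross'' lines of $A+x$ is precisely the phenomenon the paper flags after Lemma~\ref{loopsum}, and you handle it correctly.
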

\begin{proof}
If $x\subseteq A$, the result is clear. If $r(A)=r(A+x)$ then $I$ is a maximal independent set in $A+x$ and $I\subseteq I+x$, so we are also done. Therefore assume that $x$ is not contained in $A$ and $r(A)\neq r(A+x)$. By Lemma \ref{unit-rank-increase} this means $r(A+x)=r(A)+1$. \\
If $A$ is independent, then $A+x=I+x$ also has to be independent, so the statement is proven. Assume that $A$ is not independent. Then there are $A^\prime,y\subseteq A$ such that $A=A^\prime+y$ and $I\subseteq A^\prime$, hence $r(A)=r(A^\prime)$.
Now we use Proposition \ref{p-rank2} on $A^\prime$, $x$, and $y$. We have that $r(A^\prime+x+y)=r(A+x)\neq r(A)$ by assumption, so $r(A^\prime)$, $r(A^\prime+x)$ and $r(A^\prime+y)$ can not all be equal since this would contradict Proposition \ref{p-rank2}. Because $r(A^\prime)=r(A)$ and $r(A^\prime+y)=r(A)$, it needs to be that $r(A^\prime+x)\neq r(A)$. In fact, $r(A^\prime+x)>r(A)$. \\
If $A^\prime$ is independent, then $A^\prime=I$ and we have that $A^\prime+x=I+x$ is independent as well. This proves the statement. If $A^\prime$ is not independent, we repeat the procedure above: find $A^{\prime\prime}$ and $y^\prime$ such that $A^\prime=A^{\prime\prime}+y^\prime$ and $I\subseteq A^{\prime\prime}$, and apply Proposition \ref{p-rank2}. We keep doing this until we arrive at $r(I+x)>r(A)$, which means $I+x$ is independent.
\end{proof}

This result has the following consequence. First of all, the result holds for all maximal independent subspaces $I\subseteq A$. Suppose that $r(A+x)=r(A)+1$. For all $1$-dimensional subspaces $z\subseteq A+x$, $z\not\subseteq A$, we have that $A+x=A+z$. Hence, for all these $z$, we have that $I+z$ is independent. Also, all these $z$ have to be independent themselves, by (I2). So if enlarging a space raises its rank, it means all added $1$-dimensional subspaces are independent and all combinations of $I+z$ have to be independent as wel.

\begin{proposition}\label{p-maxindep2}
Let $(E,r)$ be a $q$-matroid. Let $A\subseteq E$ and let $I$ be a maximal independent subspace of $A$. Let $B\subseteq E$. Then there is a maximal independent subspace of $A+B$ that is contained in $I+B$.
\end{proposition}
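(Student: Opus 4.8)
The plan is to reduce the statement to repeated application of Proposition \ref{p-maxindep1}, which is precisely the case $\dim B = 1$. I would argue by induction on $\dim B$, building up the space $B$ one $1$-dimensional subspace at a time and tracking a maximal independent subspace along the way.

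Concretely, choose $1$-dimensional subspaces $x_1, \ldots, x_k$ of $E$ with $x_1 + \cdots + x_k = B$ (take the spans of a basis of $B$). Set $A_0 = A$ and $I_0 = I$. Suppose inductively that $I_{j-1}$ is a maximal independent subspace of $A_{j-1} := A + x_1 + \cdots + x_{j-1}$ with $I_{j-1} \subseteq I + x_1 + \cdots + x_{j-1}$. Apply Proposition \ref{p-maxindep1} to the space $A_{j-1}$, its maximal independent subspace $I_{j-1}$, and the $1$-dimensional space $x_j$: this yields a maximal independent subspace $I_j$ of $A_{j-1} + x_j = A + x_1 + \cdots + x_j =: A_j$ with $I_j \subseteq I_{j-1} + x_j \subseteq I + x_1 + \cdots + x_j$. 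After $k$ steps, $I_k$ is a maximal independent subspace of $A_k = A + B$ contained in $I + x_1 + \cdots + x_k = I + B$, which is exactly the conclusion.

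The only things that need care are the bookkeeping: that the hypothesis of Proposition \ref{p-maxindep1} is met at each step — i.e.\ that $I_{j-1}$ really is \emph{maximal} independent in $A_{j-1}$, which is guaranteed by the conclusion of the previous application — and that the chain of inclusions ends at $I + B$ rather than the larger space $I + A + B$; this is where it matters that the $x_i$ were chosen inside $B$ with $x_1 + \cdots + x_k = B$. There is no substantive obstacle beyond this, since the real work was already done in Proposition \ref{p-maxindep1}. (Degenerate steps where $x_j \subseteq A_{j-1}$ cause no trouble, as Proposition \ref{p-maxindep1} explicitly covers that case; so a plain induction on $\dim B$ suffices, with no need to induct on $\dim B - \dim(A\cap B)$.)
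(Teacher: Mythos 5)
Your proof is correct and is essentially the paper's own argument: the paper also inducts on $\dim B$, peeling off a codimension-$1$ subspace $B'$ and applying Proposition \ref{p-maxindep1} once per step, which is exactly your iteration unrolled. The bookkeeping you flag (maximality propagating from step to step, and the inclusions terminating at $I+B$) is handled identically in the paper.
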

\begin{proof}
The proof goes by induction on $\dim B$. If $\dim B=0$ then the statement is trivially true. If $\dim B=1$ the statement is true by Proposition \ref{p-maxindep1} above. Assume $\dim B>1$ and the statement is true for all subspaces with dimension less then $\dim B$. \\
Let $B^\prime$ be a subspace of $B$ of codimension $1$. By the induction hypothesis there is a maximal independent subspace $J$ of $A+B^\prime$ that is contained in $I+B^\prime$. Let $x$ be a $1$-dimensional subspace $x\subseteq B$, $x\not\subseteq B^\prime$, so $B=B^\prime+x$ and $A+B=A+B^\prime+x$. Now apply Proposition \ref{p-maxindep1} to $A+B^\prime$ and $J$: there is a maximal independent subspace of $A+B$ that is contained in $J+x\subseteq I+B^\prime+x=I+B$.
\end{proof}

\begin{proposition}\label{p-maxindep3}
Let $(E,r)$ be a $q$-matroid. Let $A,B\subseteq E$ and let $I,J$ be maximal independent subspaces of $A$ and $B$, respectively. Then there is a maximal independent subspace of $A+B$ that is contained in $I+J$.
\end{proposition}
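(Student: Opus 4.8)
The plan is to deduce the statement from Proposition~\ref{p-maxindep2} by replacing the second space $B$ by its maximal independent subspace $J$. I first record a basic fact, which also underlies the two preceding propositions: an independent subspace $K$ of a space $S$ is a \emph{maximal} independent subspace of $S$ if and only if $\dim K=r(S)$. Indeed, if $\dim K=r(S)$ then no independent subspace $K'\supsetneq K$ can fit inside $S$, since that would force $r(K')=\dim K'>r(S)$; conversely, if $K$ is independent with $\dim K=r(K)<r(S)$, then by the contrapositive of Proposition~\ref{p-rank1} there is a $1$-dimensional subspace $x\subseteq S$, $x\not\subseteq K$, with $r(K+x)\neq r(K)$, hence $r(K+x)=r(K)+1=\dim(K+x)$ by Lemma~\ref{unit-rank-increase}, so $K+x$ is independent and $K$ was not maximal.

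Now apply Proposition~\ref{p-maxindep2} to the space $A$, its maximal independent subspace $I$, and the space $J$: this produces a maximal independent subspace $K$ of $A+J$ with $K\subseteq I+J$. Since $K$ is independent with $\dim K=r(A+J)$ (by the fact above) and $K\subseteq A+J\subseteq A+B$, it is automatically a maximal independent subspace of $A+B$, and hence the one we want since $K\subseteq I+J$, provided we show the rank identity $r(A+J)=r(A+B)$. This is the only real content of the proof. To establish it, note that $J$ is a maximal independent subspace of $B$, so $r(J)=r(B)$, and therefore $r(J+x)=r(J)$ for every $1$-dimensional subspace $x\subseteq B$ (by monotonicity and the inclusions $J\subseteq J+x\subseteq B$). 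A short application of (r3) to $A+J$ and $J+x$ then upgrades this to $r(A+J+x)=r(A+J)$: the sum of these two spaces is $A+J+x$, their intersection contains $J$, so $r(A+J+x)+r(J)\leq r(A+J+x)+r((A+J)\cap(J+x))\leq r(A+J)+r(J+x)=r(A+J)+r(J)$, which gives $r(A+J+x)\leq r(A+J)$, with equality by (r2). Applying Proposition~\ref{p-rank1} to the pair of spaces $A+J$ and $B$ now yields $r(A+J+B)=r(A+J)$, which is precisely $r(A+B)=r(A+J)$ since $J\subseteq B$.

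Putting these two ingredients together proves the proposition. The main (indeed essentially the only) obstacle is the rank identity $r(A+J)=r(A+B)$ in the middle paragraph; everything else is either the preliminary bookkeeping fact about maximal independent subspaces or a direct appeal to Propositions~\ref{p-maxindep2} and~\ref{p-rank1}. Note finally that this proposition is exactly axiom (I4), so together with Propositions~\ref{p-maxindep1} and~\ref{p-maxindep2} it completes the verification that (I4) holds in every $q$-matroid.
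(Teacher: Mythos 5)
Your proof is correct, and every step checks out: the characterization of maximal independent subspaces as those of dimension $r(S)$, the single application of Proposition~\ref{p-maxindep2} to $(A,I,J)$, and the rank identity $r(A+J)=r(A+B)$ established via (r3) and Proposition~\ref{p-rank1}. The route differs from the paper's in how the reduction is organized. The paper applies Proposition~\ref{p-maxindep2} \emph{twice}: first to $(A,I,B)$ to get $r(A+B)=r(I+B)$, then to $(B,J,I)$ to produce a maximal independent $K\subseteq J+I$ of $B+I$ with $r(I+B)=r(I+J)$; chaining these, $K$ is maximal independent in $A+B$. You instead apply it once, to $(A,I,J)$, which hands you $K\subseteq I+J$ immediately, and then you must pay for this with the identity $r(A+J)=r(A+B)$, which you prove from scratch. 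Both work, but note that your ``only real content'' is itself a corollary of Proposition~\ref{p-maxindep2}: applying it to $(B,J,A)$ yields a maximal independent subspace of $A+B$ inside $A+J$, whence $r(A+J)\geq r(A+B)$ and so equality by (r2) --- no appeal to (r3) or Proposition~\ref{p-rank1} needed. So your middle paragraph, while correct, re-derives by hand something the already-proved machinery gives for free; the paper's double application is the more economical packaging of the same idea.
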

\begin{proof}
By Proposition \ref{p-maxindep2} there is a maximal independent subspace of $A+B$ that is contained in $I+B$. This subspace is also maximal independent in $I+B$, by (r2) and $I+B\subseteq A+B$. So $r(A+B)=r(I+B)$. On the other hand, if we apply the same Proposition \ref{p-maxindep2} to $B$ and $I$, we find a maximal independent subspace $K$ of $B+I$ that is contained in $J+I$. Again, $K$ is also maximal independent in $J+I$, by (r2) and $J+I\subseteq B+I$. So $r(I+B)=r(I+J)$. This implies that $r(A+B)=r(I+J)$, hence the subspace $K\subseteq I+J$ is maximal independent in $A+B$, as was to be shown.
\end{proof}

Before finally proving Theorem \ref{indep-rank}, we prove a variation of the properties (I1),(I2),(I3). We denote by $\mathbf{0}$ the $0$-dimensional subspace that contains only the zero vector.

\begin{proposition}\label{indep-prime}
Let $E$ be a finite dimensional space and let $\mathcal{I}$ be a family of subspaces of $E$.
Then the family $\mathcal{I}$ satisfies the properties (I1),(I2),(I3) above if and only if it satisfies:
\begin{itemize}
\item[(I1')] ${\bf 0} \in\mathcal{I}$.
\item[(I2)\phantom{'}] If $J\in\mathcal{I}$ and $I\subseteq J$, then $I\in\mathcal{I}$.
\item[(I3')] If $I,J\in\mathcal{I}$ with $\dim J=\dim I+1$, then there is some $1$-dimensional subspace $x\subseteq J$, $x\not\subseteq I$ with $I+x\in\mathcal{I}$.
\end{itemize}
\end{proposition}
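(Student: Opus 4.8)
The plan is to prove the two implications of the equivalence separately, checking the listed axioms one at a time. Since (I2) appears verbatim in both collections, in each direction it requires no argument at all, and the work splits into the (I1)$\leftrightarrow$(I1') bookkeeping and the (I3)$\leftrightarrow$(I3') exchange bookkeeping.

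\textbf{From (I1),(I2),(I3) to (I1'),(I2),(I3').} For (I1'), I would use that (I1) supplies some $J\in\mathcal{I}$ and then apply (I2) to the subspace $\mathbf{0}\subseteq J$ to get $\mathbf{0}\in\mathcal{I}$. For (I3'), I would observe that its hypothesis $\dim J=\dim I+1$ is a special case of the hypothesis $\dim I<\dim J$ of (I3), so (I3) applies directly and hands back exactly the $1$-dimensional $x\subseteq J$ with $x\not\subseteq I$ and $I+x\in\mathcal{I}$ that (I3') demands.

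\textbf{From (I1'),(I2),(I3') to (I1),(I2),(I3).} For (I1), note $\mathbf{0}\in\mathcal{I}$ by (I1'), so $\mathcal{I}\neq\emptyset$. The only implication with content is (I3). Given $I,J\in\mathcal{I}$ with $\dim I<\dim J$, I would pick a subspace $J'\subseteq J$ with $\dim J'=\dim I+1$ (possible since $\dim J\geq\dim I+1$). Then $J'\in\mathcal{I}$ by (I2), and applying (I3') to the pair $I,J'$ yields a $1$-dimensional $x\subseteq J'$ with $x\not\subseteq I$ and $I+x\in\mathcal{I}$; since $J'\subseteq J$ we get $x\subseteq J$, which is precisely (I3). (Equivalently one could induct on $\dim J-\dim I$ with (I3') as base case, but this reduction avoids the induction.)

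There is no real obstacle here; the proposition is a housekeeping lemma whose point is to let later proofs verify the single-step exchange (I3') in place of the full exchange (I3). The one spot that warrants a moment's care is that the chosen $J'\subseteq J$ of dimension $\dim I+1$ is automatically not contained in $I$, so the $1$-dimensional space returned by (I3') genuinely is a new direction lying inside $J$, as (I3) requires.
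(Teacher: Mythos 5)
Your proposal is correct and follows essentially the same route as the paper's proof: the forward direction is immediate ((I1') from (I1)+(I2), (I3') as a special case of (I3)), and the reverse direction recovers (I3) by choosing a subspace $J'\subseteq J$ of dimension $\dim I+1$, invoking (I2) and then (I3'). No gaps.
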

\begin{proof}
We need to show that (I1),(I2),(I3)$\iff$(I1'),(I2),(I3'). \\
$\Rightarrow$: Since $\mathcal{I}\neq\emptyset$ by (I1) and every subspace of an independent space is independent by (I2), we have $\mathbf{0}\subseteq\mathcal{I}$ (I1'). (I3') is just a special case of (I3). \\
$\Leftarrow$: (I1') directly implies (I1). Let $I,J\in\mathcal{I}$ with $\dim I<\dim J$. Let $I^\prime$ be some subspace of $J$ with $\dim I^\prime=\dim I+1$. Then $I^\prime$ is independent by (I2) and we can use (I3') to find a $1$-dimensional subspace $x\subseteq I^\prime$, $x\not\subseteq I$ with $I+x\in\mathcal{I}$. Since $I^\prime\subseteq J$, clearly $x\subseteq J$, $x\not\subseteq I$, so (I3) follows.
\end{proof}

\begin{proof}[Proof (Theorem \ref{indep-rank})]
The proof consists of three parts.
\begin{enumerate}
\item $r\to\mathcal{I}$. Given a function $r$ with properties (r1),(r2),(r3), define $\mathcal{I}$ as $\{A\subseteq E:r(A)=\dim A\}$ and prove (I1),(I2),(I3),(I4).
\item $\mathcal{I}\to r$. Given a family $\mathcal{I}$ with properties (I1),(I2),(I3),(I4), define
$r(A)$ as $\max_{I\subseteq A}\{\dim I:I\in\mathcal{I}\}$ and prove (r1),(r2),(r3).
\item The first two are each others inverse, that is: $\mathcal{I}\to r\to\mathcal{I}^\prime$ implies $\mathcal{I}=\mathcal{I}^\prime$, and $r\to\mathcal{I}\to r^\prime$ implies $r=r^\prime$.
\end{enumerate}
$\bullet$ Part 1. Let $M=(E,r)$ be a $q$-matroid and define the family $\mathcal{I}$ to be those subspaces $I$ of $E$ for which $r(I)=\dim I$. We will show $\mathcal{I}$ satisfies (I1'),(I2),(I3'),(I4). \\
By (r1), $r(\mathbf{0})=0$, so $r(\mathbf{0})=\dim\mathbf{0}$ and $\mathbf{0}\in\mathcal{I}$, hence (I1'). (I4) was proven in Proposition \ref{p-maxindep3}. \\
For (I2), let $J\in\mathcal{I}$ and $I\subseteq J$. We use (r3) with $A=I$ and $B$ a subspace of $J$ such that $A\cap B= {\bf 0}$ and $A+B=J$, to show $\dim I=r(I)$. The following is independent of the choice of $B$. Since $\dim J=r(J)$, we have
\[ r(I+B)+r(I\cap B)=r(J)+r(\mathbf{0})=\dim J. \]
By (r1), we have
\[ r(I)+r(B)\leq\dim I+\dim(B)=\dim J. \]
Combining and using (r3) gives
\[ \dim J=r(J)+r(\mathbf{0})\leq r(I)+r(B)\leq\dim I+\dim(B)=\dim J, \]
so we must have equality everywhere. This means, with (r1), that $r(B)=\dim(B)$ and $r(I)=\dim I$.
Therefore $I\in\mathcal{I}$ and (I2) holds. \\
We will prove (I3') by contradiction. Let $I,J\in\mathcal{I}$ with $\dim I<\dim J$ and let $x$ a $1$-dimensional subspace $x\subseteq J$, $x\not\subseteq I$. Suppose that (I3) fails, so $I+x\notin\mathcal{I}$. Then we have $r(I)=\dim I$ but $r(I+x)\neq\dim(I+x)=\dim I+1$. By (r1) and (r2) we have that
\[ \dim I=r(I)\leq r(I+x)\leq\dim(I+x)=\dim I+1. \]
The second inequality can not be an equality, so the first inequality has to be an equality: $r(I+x)=r(I)$. Now this reasoning holds for every $1$-dimensional subspace $x\subseteq J$, $x\not\subseteq I$ so by Proposition \ref{p-rank1} we have that $r(I)=r(I+J)$. But $J\in\mathcal{I}$ and we have that
\[ r(I+J)=r(I)=\dim I<\dim J=r(J) \]
which contradicts (r2) because $J\subseteq I+J$. So (I3) has to hold. \\
$\bullet$ Part 2. Let $\mathcal{I}$ be a family of subspaces of $E$ that satisfies (I1),(I2),(I3),(I4). Define $r(A)$ to be the dimension of the largest independent space contained in $A$. We show $r$ satisfies (r1),(r2),(r3). \\
Since the rank is a dimension, it is a non-negative integer. From the definition of $r$ we have $r(A)\leq\dim A$ and from (I1') we have $0\leq r(A)$. This proves (r1). If $A\subseteq B\subseteq E$, then every independent subspace of $A$ is an independent subspace of $B$, so
\[ r(A)=\max_{I\subseteq A}\{\dim I:I\in\mathcal{I}\}\leq\max_{I\subseteq B}\{\dim I:I\in\mathcal{I}\}=r(B) \]
and thus (r2). The difficultly in this part is to prove (r3). \\
Let $A,B\subseteq E$ and let $I_{A\cap B}$ be a maximal independent space in $A\cap B$. Use (I3) as many times as possible to extend $I_{A\cap B}$ to a maximal independent space $I_A\subseteq A$, and the same to get a maximal independent space $I_B\subseteq B$. By Proposition \ref{p-maxindep3} there is a maximal independent space $I_{A+B}$ of $A+B$ that is contained in $I_A+I_B$. Furthermore, $I_A\cap I_B=I_{A\cap B}$ because $I_{A\cap B}\subseteq I_A$ and $I_{A\cap B}\subseteq I_B$, and $I_{A\cap B}$ is a maximal independent space in $A\cap B$ hence a maximal independent space in $I_A\cap I_B$. Combining all this, we have
\begin{eqnarray*}
r(A+B)+r(A\cap B) & = & \dim I_{A+B}+\dim I_{A\cap B} \\
 & \leq & \dim(I_A+I_B)+\dim I_{A\cap B} \\
 & = & \dim I_A+\dim I_B-\dim I_{A\cap B}+\dim I_{A\cap B} \\
 & = & \dim I_A+\dim I_B \\
 & = & r(A)+r(B)
\end{eqnarray*}
and this is exactly (r3). \\
$\bullet$ Part 3. Given a rank function $r$ satisfying (r1),(r2),(r3), create a family $\mathcal{I}$ by $I\in\mathcal{I}$ if $\dim I=r(I)$. Then use $\mathcal{I}$ to create a (possibly new) rank function $r^\prime(A)=\max_{I\subseteq A}\{\dim I:I\in\mathcal{I}\}$. We want to show that $r^\prime(A)=r(A)$ for all $A\subseteq E$. Note that $r^\prime(A)=\dim I=r(I)$ for some $I\subseteq A$. By (r2), $r(I)\leq r(A)$ so $r^\prime(A)\leq r(A)$. For the reverse inequality, assume $r^\prime(A)<r(A)$ for some $A\subseteq E$. Then by definition of $r^\prime$, for all $I\in\mathcal{I}$ with $I\subseteq A$ we must have $r(A)>\dim I$. Let $I$ be a maximum-dimension such space. Then for all $1$-dimensional subspaces $x\subseteq A$ that intersect trivially with $I$, we have $I+x\notin\mathcal{I}$. Thus $r(I)=r(I+x)$ for all such $x$ and by Proposition \ref{p-rank1} we have $r(I)=r(A)=\dim I$. Contradiction, so $r^\prime(A)\geq r(A)$. Together we have $r^\prime(A)=r(A)$. \\
Given a family $\mathcal{I}$ satisfying (I1),(I2),(I3),(I4), define $r$ by $r(A)=\max_{I\subseteq A}\{\dim I:I\in\mathcal{I}\}$. Then let $\mathcal{I}^\prime$ be defined by $I\in\mathcal{I}^\prime$ if $r(I)=\dim I$. We want to show that $\mathcal{I}=\mathcal{I}^\prime$. Let $I\in\mathcal{I}$, then $r(I)=\dim I$ by the definition of $r$, and thus $I\in\mathcal{I}^\prime$. Now let $I\in\mathcal{I}^\prime$, then $r(I)=\dim I$ by the definition of $\mathcal{I}^\prime$, and thus $I$ is the largest independent subspace of $I$ and $I\in\mathcal{I}$.
\end{proof}

\section{Rank metric codes}\label{sec-codes}

Now that we have established some basic facts about $q$-matroids, we are ready to discuss the motivation of studying them. We show that every rank metric code gives rise to a $q$-matroid. For more on rank metric codes, see Gabidulin \cite{gabidulin:1985}. We consider codes over $L$, where $L$ is a finite Galois field extension of a field $K$. This is a generalization of the case where $K=\mathbb{F}_q$ and $L=\mathbb{F}_{q^m}$ of Gabidulin's \cite{gabidulin:1985} to arbitrary characteristic as considered by Augot, Loidreau and Robert \cite{augot:2013,augot:2014}. Much of the material here about rank metric codes is taken from \cite{jurrius:2015a,jurrius:2016}. See also \cite{martinez-penas:2016}. \\

Let $K$ be a field and let $L$ be a finite Galois extension of $K$. A \emph{rank metric code} is an $L$-linear subspace of $L^n$. To all codewords we associate a matrix as follows. Choose a basis $B=\{ \alpha_1, \ldots ,\alpha _m \} $ of $L$ as a vector space over $K$. Let $\mathbf{c} =(c_1, \ldots ,c_n)\in L^n$. The $m \times n$ matrix $M_B(\mathbf{c})$ is associated to $\mathbf{c} $ where the $j$-th column of $M_B(\mathbf{c} )$ consists of the coordinates of $c_j$  with respect to the chosen basis: $c_j = \sum_{i=1}^m c_{ij}\alpha_i$. So $M_B(\mathbf{c} )$ has entries $c_{ij}$. \\
The $K$-linear row space in $K^n$ and the rank of $M_B(\mathbf{c} )$ do not depend on the choice of the basis $B$,
since for another basis $B'$ there exists an invertible matrix $A$ such that $M_B(\mathbf{c})= AM_{B'}(\mathbf{c})$.
If the choice of basis is not important, we will write $M(\mathbf{x})$ for $M_{B}(\mathbf{x})$.
The rank weight $\wt_R(\mathbf{c} )=\rk(\mathbf{c})$ of $\mathbf{c} $ is by definition the rank of the matrix $M(\mathbf{c} )$,
or equivalently the dimension over $K$ of the row space of $M_B(\mathbf{c} )$. This definition follows from the rank distance, that is defined by $d_R(\mathbf{x}, \mathbf{y} ) = \rk(\mathbf{x}-\mathbf{y})$. The rank distance is in fact a metric on the collection of all $m \times n$ matrices, see \cite{augot:2013,gabidulin:1985}.

\begin{definition}
Let $C$ be an $L$-linear code.
Let $\mathbf{c} \in C$. Then $\Rsupp(\mathbf{c})$, the \emph{rank support} of $\mathbf{c}$
is the $K$-linear row space of $M_B(\mathbf{c})$. So $\wt_R(\mathbf{c})$ is the dimension of $\Rsupp(\mathbf{c})$.
\end{definition}

Note that this definition is the rank metric case of the support weights, or weights of subcodes,
of codes over the Hamming metric.

\begin{definition}\label{dJ2}
For a $K$-linear subspace $J$ of $K^n$ we define:
\[ C(J)=\{\mathbf{c}\in C :\Rsupp(\mathbf{c})\subseteq J^\perp\}. \]
\end{definition}

From this definition it is clear that $C(J)$ is a $K$-linear subspace of $C$, but in fact it is also an $L$-linear subspace.

\begin{lemma}\label{lJ2}
Let $C$ be an $L$-linear code of length $n$ and let $J$ be a $K$-linear subspace of $K ^n$.
Then $\mathbf{c} \in C(J)$ if and only if $\mathbf{c}\cdot\mathbf{y}=0$ for all $ \mathbf{y} \in J $.
Furthermore $C(J)$ is an $L$-linear subspace of $C$.
\end{lemma}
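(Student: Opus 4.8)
The plan is to unravel the definition of $\Rsupp$ in coordinates with respect to the chosen basis $B=\{\alpha_1,\ldots,\alpha_m\}$ of $L$ over $K$ and then exploit that $B$ is $K$-linearly independent. First I would write, for a codeword $\mathbf{c}=(c_1,\ldots,c_n)$, the rows of its associated matrix $M_B(\mathbf{c})$ as $\mathbf{r}_1,\ldots,\mathbf{r}_m\in K^n$, so that the relation $c_j=\sum_{i=1}^m c_{ij}\alpha_i$ becomes $\mathbf{c}=\sum_{i=1}^m\alpha_i\mathbf{r}_i$ and $\Rsupp(\mathbf{c})=\langle\mathbf{r}_1,\ldots,\mathbf{r}_m\rangle_K$.

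For the equivalence, I would fix $\mathbf{y}\in J\subseteq K^n$ and expand
\[ \mathbf{c}\cdot\mathbf{y}=\sum_{j=1}^n c_j y_j=\sum_{i=1}^m\alpha_i\,(\mathbf{r}_i\cdot\mathbf{y}). \]
Since $\mathbf{r}_i$ and $\mathbf{y}$ both lie in $K^n$, each coefficient $\mathbf{r}_i\cdot\mathbf{y}$ lies in $K$, so the $K$-linear independence of $\alpha_1,\ldots,\alpha_m$ gives that $\mathbf{c}\cdot\mathbf{y}=0$ if and only if $\mathbf{r}_i\cdot\mathbf{y}=0$ for every $i$. On the other hand, $\mathbf{c}\in C(J)$ means $\Rsupp(\mathbf{c})\subseteq J^\perp$, and since $J^\perp$ is a subspace this happens exactly when each generator $\mathbf{r}_i$ lies in $J^\perp$, i.e. when $\mathbf{r}_i\cdot\mathbf{y}=0$ for all $i$ and all $\mathbf{y}\in J$. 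Combining the two observations, with the quantifier over $\mathbf{y}\in J$ ranging on both sides, yields $\mathbf{c}\in C(J)\iff\mathbf{c}\cdot\mathbf{y}=0$ for all $\mathbf{y}\in J$.

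For the $L$-linearity of $C(J)$, I would just invoke the characterization just obtained: if $\mathbf{c},\mathbf{c}'\in C(J)$ and $\lambda,\mu\in L$, then for every $\mathbf{y}\in J$ we have $(\lambda\mathbf{c}+\mu\mathbf{c}')\cdot\mathbf{y}=\lambda(\mathbf{c}\cdot\mathbf{y})+\mu(\mathbf{c}'\cdot\mathbf{y})=0$, using that the standard bilinear form on $L^n$ is $L$-linear in the first argument (the entries of $\mathbf{y}$ lie in $K\subseteq L$). Since $C$ is $L$-linear we also have $\lambda\mathbf{c}+\mu\mathbf{c}'\in C$, hence $\lambda\mathbf{c}+\mu\mathbf{c}'\in C(J)$, so $C(J)$ is an $L$-linear subspace of $C$.

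I do not expect a genuine obstacle here; the argument is essentially bookkeeping. The one point needing a moment's care is noticing that $\mathbf{r}_i\cdot\mathbf{y}\in K$ rather than merely in $L$ — this is exactly what licenses the appeal to $K$-linear independence of $B$ — together with the fact, already recorded in the text, that the $K$-linear row space of $M_B(\mathbf{c})$, and hence $\Rsupp(\mathbf{c})$, does not depend on the choice of $B$.
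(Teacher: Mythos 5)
Your proof is correct and follows essentially the same route as the paper: both unpack $\Rsupp(\mathbf{c})\subseteq J^\perp$ row by row, use that the inner products $\mathbf{r}_i\cdot\mathbf{y}$ lie in $K$ together with the $K$-linear independence of $\alpha_1,\ldots,\alpha_m$ to identify this with $\mathbf{c}\cdot\mathbf{y}=0$ for all $\mathbf{y}\in J$, and then read off $L$-linearity from that characterization. If anything, you are slightly more explicit than the paper about the one nontrivial step (why the vanishing of $\sum_i\alpha_i(\mathbf{r}_i\cdot\mathbf{y})$ forces each coefficient to vanish), which the paper leaves implicit in its chain of equivalences.
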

\begin{proof}
The following statements are equivalent:
\[ \begin{array}{c}
\mathbf{c} \in C(J)\\
\sum_{j=1}^nc_{ij}y_j =0 \mbox{ for all } \mathbf{y} \in J \mbox{ and } i=1, \ldots ,m\\
\sum_{i=1}^m(\sum_{j=1}^nc_{ij}y_j) \alpha_i=0 \mbox{  for all } \mathbf{y} \in J\\
\sum_{j=1}^n(\sum_{i=1}^mc_{ij} \alpha_i) y_j=0 \mbox{ for all } \mathbf{y} \in J\\
\sum_{j=1}^n c_j y_j=0 \mbox{ for all }\mathbf{y} \in J\\
\mathbf{c} \cdot \mathbf{y} =0\mbox{ for all } \mathbf{y} \in J\\
\end{array} \]
Hence $C(J) = \{  \mathbf{c} \in C :  \mathbf{c} \cdot \mathbf{y}=0 \mbox{ for all } \mathbf{y} \in J  \} $.
From this description it follows directly that $C(J)$ is an $L$-linear subspace of $C$.
\end{proof}

\begin{definition}\label{dH3}
Let $C$ be an $L$-linear code of length $n$.
Let $J$ be a $K$-linear subspace of $K ^n$ of dimension $t$ with generator matrix $Y$.
Define the map $\pi _J : L^n \rightarrow L^t$ by $\pi _J (\mathbf{x})= \mathbf{x} Y^T$, and  $C_J = \pi_J(C)$.
\end{definition}

\begin{lemma}\label{lJ3}
Let $C$ be an $L$-linear code of length $n$.
Let $J$ be a $K$-linear subspace of $K ^n$ of dimension $t$ with generator matrix $Y$.
Then $\pi _J $ is an $L$-linear map and $C_J$ is an $L$-linear code of length $t$ and its dimension does not depend on the chosen
generator matrix. Furthermore we have an exact sequence of vector spaces:
\[ 0 \longrightarrow C(J)  \longrightarrow C  \longrightarrow C_J  \longrightarrow 0. \]
\end{lemma}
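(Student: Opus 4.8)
The plan is to verify the four assertions of the lemma essentially in order, with the only substantive point being the identification $\ker(\pi_J|_C)=C(J)$, after which the dimension statement falls out by rank--nullity.

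First I would observe that $\pi_J$ is $L$-linear: since the entries of $Y$ lie in $K\subseteq L$, the rule $\mathbf{x}\mapsto\mathbf{x}Y^T$ satisfies $(\lambda\mathbf{x}+\mathbf{x}')Y^T=\lambda(\mathbf{x}Y^T)+\mathbf{x}'Y^T$ for all $\mathbf{x},\mathbf{x}'\in L^n$ and $\lambda\in L$. Hence $C_J=\pi_J(C)$ is the image of the $L$-linear subspace $C$ under an $L$-linear map, so it is an $L$-linear subspace of $L^t$, that is, an $L$-linear code of length $t$.

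Next, for the exact sequence, I would note that the inclusion $C(J)\hookrightarrow C$ makes sense because $C(J)\subseteq C$ by Definition \ref{dJ2} and is injective, while $\pi_J|_C\colon C\to C_J$ is surjective by the very definition $C_J=\pi_J(C)$; so exactness reduces to the single point $\ker(\pi_J|_C)=C(J)$. This is the crux, and I expect it to follow immediately from Lemma \ref{lJ2}: for $\mathbf{c}\in C$ the equation $\pi_J(\mathbf{c})=\mathbf{c}Y^T=\mathbf{0}$ says exactly that $\mathbf{c}\cdot\mathbf{y}_i=0$ for each row $\mathbf{y}_i$ of $Y$, and since the rows of $Y$ generate $J$ this is equivalent to $\mathbf{c}\cdot\mathbf{y}=0$ for all $\mathbf{y}\in J$, i.e. to $\mathbf{c}\in C(J)$ by Lemma \ref{lJ2}.

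Finally, the independence of $\dim_L C_J$ from the chosen generator matrix follows from the exact sequence: by rank--nullity, $\dim_L C_J=\dim_L C-\dim_L C(J)$, and the right-hand side depends only on $C$ and $J$. (One can also see it directly: a second generator matrix $Y'$ of $J$ satisfies $Y'=AY$ for an invertible matrix $A$ over $K$, so $\mathbf{x}\mapsto\mathbf{x}(Y')^T$ is $\pi_J$ post-composed with the automorphism $\mathbf{v}\mapsto\mathbf{v}A^T$ of $L^t$, which alters $C_J$ only by an $L$-isomorphism.) Since the whole argument is just unwinding the definitions against Lemma \ref{lJ2}, I do not expect a genuine obstacle; the one place to be slightly careful is recording the transpose $A^T$ correctly in the change-of-generator-matrix remark.
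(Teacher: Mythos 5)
Your proposal is correct and follows essentially the same route as the paper: $L$-linearity from the fact that $Y$ has entries in $K$, surjectivity of $\pi_J|_C$ by definition, and the kernel identified with $C(J)$ via Lemma \ref{lJ2}. The only (harmless) divergence is in the dimension-independence claim, where the paper computes the rank of $G'(Y')^T=A(GY^T)B^T$ directly, while you deduce it from rank--nullity, $\dim_L C_J=\dim_L C-\dim_L C(J)$, using that $C(J)$ is defined without reference to $Y$ --- a slightly slicker argument that you correctly back up with the paper's matrix computation as an alternative.
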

\begin{proof}
The map $\pi _J $ is defined by a matrix with entries in $K$ so it is $L $-linear.
The image of $C$ under $\pi_J$ is $C_J$. Hence $C_J$ is an $L$-linear code.\\
If $G$ is generator matrix of $C$, then  $C_J$ is the row space of $GY^T$ and the dimension of $C_J$ is equal to the rank of $GY^T$.
If $G'$ is another generator matrix of $C$ and $Y'$ another generator matrix of $J$, then
there exists an invertible $k\times k$ matrix $A$ with entries in $L$ and an invertible $t\times t$ matrix $B$ with entries in $K$
such that $G'=AG$ and $Y'=BY$. The row space of $G'(Y')^T$ is the space $C_J$ with respect $Y'$. Now
\[ G'(Y')^T=(AG)(BY)^T = A(GY^T) B^T, \]
and $A$ and $B^T$ are invertible. Hence $G'(Y')^T$ and $GY^T$ have the same rank. Therefore the dimension of $C_J$ does not depend on the chosen generator matrix for $J$.\\
The map $C(J)  \rightarrow C$ is injective and the map $\pi_J: C  \rightarrow C_J $ is surjective, both by definition.
Furthermore the kernel  of $\pi_J: C  \rightarrow C_J $ is equal to $\{  \mathbf{c} \in C :  \mathbf{c} \cdot \mathbf{y}=0 \mbox{ for all } \mathbf{y} \in J  \} $, which is equal to $C(J)$ by Lemma \ref{lJ2}.
Hence the given sequence is exact.
\end{proof}

\begin{definition}\label{dH4}
Let $C$ be an $L$-linear code of length $n$.
Let $J$ be a $K$-linear subspace of $K^n$ of dimension $t$.
Define $l(J) = \dim_{L} C(J)$ and $r(J) = \dim_{L} C_J$.
\end{definition}

\begin{corollary}\label{cJ4}
Let $C$ be an $L$-linear code of length $n$ and dimension $k$ and let $J$ be a $K$-linear subspace of $K^n$. Then $l(J)+r(J)=k$.
\end{corollary}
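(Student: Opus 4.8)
The plan is to read this off directly from the exact sequence established in Lemma \ref{lJ3}. That lemma gives the short exact sequence of vector spaces
\[ 0 \longrightarrow C(J) \longrightarrow C \longrightarrow C_J \longrightarrow 0, \]
together with the assertion that all three spaces are $L$-linear and the two maps are $L$-linear. So the whole statement is an instance of the rank–nullity theorem applied to the surjection $\pi_J \colon C \to C_J$: its image has $L$-dimension $r(J)$ by Definition \ref{dH4}, and its kernel is exactly $C(J)$ (the content of Lemma \ref{lJ3}, via Lemma \ref{lJ2}), which has $L$-dimension $l(J)$, again by Definition \ref{dH4}.

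Concretely, the steps I would carry out are: (1) invoke Lemma \ref{lJ3} to get the exact sequence and the fact that $\ker \pi_J = C(J)$; (2) apply the first isomorphism theorem to write $C_J = \pi_J(C) \cong C/C(J)$ as $L$-vector spaces, so that $\dim_L C = \dim_L C(J) + \dim_L C_J$; (3) substitute $\dim_L C = k$, $\dim_L C(J) = l(J)$, and $\dim_L C_J = r(J)$ from Definition \ref{dH4} to conclude $l(J) + r(J) = k$.

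Honestly, there is no real obstacle here: the only point requiring any care is that the dimension count must be taken over $L$ (not over $K$), and this is legitimate precisely because Lemma \ref{lJ3} has already verified that $C(J)$, $C$, and $C_J$ are all $L$-vector spaces and that the connecting maps are $L$-linear. Once that is granted, the corollary is immediate. I would therefore keep the proof to a single short paragraph citing Lemma \ref{lJ3} and Definition \ref{dH4}.
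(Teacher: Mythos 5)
Your proof is correct and matches the paper's approach exactly: the paper also derives the identity as a direct consequence of the exact sequence in Lemma \ref{lJ3}, and you have merely spelled out the dimension count that the paper leaves implicit.
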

\begin{proof}
This is a direct consequence of Proposition \ref{lJ3}.
\end{proof}

We now know enough about rank metric codes to show that there is a $q$-matroid associated to them.

\begin{theorem}
Let $C$ be a linear rank metric code over $L$, $E=K^n$ and $r$ the function from Definition \ref{dH4}. Then $(E,r)$ is a $q$-matroid.
\end{theorem}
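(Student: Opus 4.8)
The plan is to verify the three rank axioms (r1), (r2), (r3) directly for the function $r$ of Definition \ref{dH4}, which is well defined (independent of the chosen generator matrix) by Lemma \ref{lJ3}. Write $k=\dim_L C$. Axiom (r1) is immediate: for a subspace $J$ of dimension $t$ the code $C_J=\pi_J(C)$ is an $L$-subspace of $L^t$, so $0\le r(J)=\dim_L C_J\le t=\dim J$. For (r2) and (r3) the key idea is to dualize: by Corollary \ref{cJ4} we have $r(J)=k-l(J)$, so (r2) is equivalent to $l$ being order-reversing, and (r3) is equivalent to the ``supermodularity'' inequality $l(A)+l(B)\le l(A+B)+l(A\cap B)$.

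To control $l$ I would use the description $C(J)=\{\mathbf{c}\in C:\mathbf{c}\cdot\mathbf{y}=0 \text{ for all } \mathbf{y}\in J\}$ from Lemma \ref{lJ2}. Since $C$ is $L$-linear and the standard dot product on $L^n$ is $L$-bilinear, a codeword $\mathbf{c}$ annihilates $J$ iff it annihilates the $L$-linear span $\widetilde{J}\subseteq L^n$ of $J$; hence $C(J)=C\cap\widetilde{J}^{\perp}$, the intersection taken inside $L^n$ with $\perp$ the ordinary $L$-linear orthogonal complement. Extension of scalars from $K$ to $L$ sends $A+B$ to $\widetilde{A}+\widetilde{B}$ and $A\cap B$ to $\widetilde{A}\cap\widetilde{B}$, and taking orthogonal complements interchanges sums and intersections, so
\begin{align*}
C(A+B) &= C\cap(\widetilde{A}+\widetilde{B})^\perp = (C\cap\widetilde{A}^\perp)\cap(C\cap\widetilde{B}^\perp) = C(A)\cap C(B),\\
C(A\cap B) &= C\cap(\widetilde{A}\cap\widetilde{B})^\perp = C\cap(\widetilde{A}^\perp+\widetilde{B}^\perp) \supseteq C(A)+C(B).
\end{align*}

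From the first line, if $A\subseteq B$ then $C(B)=C(A)\cap C(B)\subseteq C(A)$, so $l(B)\le l(A)$ and (r2) follows (alternatively one sees (r2) at once by choosing a generator matrix of $B$ whose first rows generate $A$, which exhibits $C_A$ as a coordinate projection of $C_B$). For (r3), the two displayed relations together with the modularity of dimension for the $L$-subspaces $C(A),C(B)$ of $C$ give
\begin{align*}
l(A+B)+l(A\cap B) &\ge \dim_L\bigl(C(A)\cap C(B)\bigr) + \dim_L\bigl(C(A)+C(B)\bigr) \\
&= \dim_L C(A) + \dim_L C(B) = l(A)+l(B),
\end{align*}
which is exactly the supermodularity of $l$; translating back through Corollary \ref{cJ4} yields (r3).

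The only step that is not pure formalism is the claim that extension of scalars commutes with intersection, i.e. $\widetilde{A\cap B}=\widetilde{A}\cap\widetilde{B}$ inside $L^n$ (the equality $\widetilde{A+B}=\widetilde{A}+\widetilde{B}$ being obvious), together with the bookkeeping that makes $C(J)=C\cap\widetilde{J}^\perp$ legitimate — one must check that the $K$- and $L$-versions of ``annihilating $J$'' agree. I expect this to be the main point to get right. It can be settled by extending a basis of $A\cap B$ separately to a basis of $A$ and to a basis of $B$ and noting that $K$-linear independence of vectors of $K^n$ is preserved in $L^n$, or by invoking flatness of $L$ over $K$. Everything else reduces to standard linear algebra over $L$ and the exact sequence of Lemma \ref{lJ3}.
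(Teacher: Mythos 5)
Your argument is correct and matches the paper's proof in essence: both verify (r1) directly and reduce (r2) and (r3) to the identities $C(A+B)=C(A)\cap C(B)$ and $C(A)+C(B)\subseteq C(A\cap B)$ together with modularity of dimension inside $C$, the paper deriving these identities from the rank-support description $C(J)=\{\mathbf{c}:\Rsupp(\mathbf{c})\subseteq J^\perp\}$ rather than from $L$-linear orthogonal complements of scalar extensions. One small remark: the step you single out as the main point to get right, namely $\widetilde{A\cap B}=\widetilde{A}\cap\widetilde{B}$, is true but not actually needed, since the inclusion $C(A)+C(B)\subseteq C(A\cap B)$ already follows from the trivial containments $A\cap B\subseteq A$ and $A\cap B\subseteq B$ and the order-reversing property of $J\mapsto C(J)$, which is exactly how the paper handles it.
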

\begin{proof}
First of all, it is clear that $r$ is an integer valued function defined on the subspaces of $E$. We need to show that $r$ satisfies the properties (r1),(r2),(r3). Let $I,J\subseteq E$. We will make heavy use of Corollary \ref{cJ4}, saying $r(J)=k-l(J)$. \\
$\bullet$ (r1) $0\leq r(J)\leq\dim J$. \\
This follows from the definition of $r(J)= \dim C_J$ and the fact that $C_J$ is a subspace of $K^t$ with $t=\dim J$. \\
$\bullet$ (r2) If $I\subseteq J$ then $r(I)\leq r(J)$. \\
Let $I \subseteq J$ and let $ \mathbf{c}\in C(J)$. Then $I \subseteq J \subseteq \Rsupp(\mathbf{c})^\perp$. So $ \mathbf{c}\in C(I)$. Hence $C(J) \subseteq C(I)$ and $l(J) \leq l(I)$. Therefore $r(I) \leq r(J)$. \\
$\bullet$ (r3) $r(I+J) +r(I\cap J) \leq r(I) + r(J)$. \\
Let $I$, $J$ and $H$ be linear subspaces of $K^n$. If $I \subseteq H$  and $J  \subseteq H$, then $I +J \subseteq H$, since $H$ is a subspace. On the other hand, if $I+J \subseteq H$, then $I \subseteq I+J \subseteq H$ so $I \subseteq H$ and similarly $J \subseteq H$. Hence $I+J \subseteq H$  if and only if  $I \subseteq H$  and $J \subseteq H$.\\
The following statements are then equivalent:
\[ \begin{array}{c}
\mathbf{c}  \in C(I) \cap C(J)\\
\mathbf{c} \in C(I) \ \mbox{ and } \  \mathbf{c} \in C(J) \\
I \subseteq\Rsupp(\mathbf{c})^\perp \ \mbox{ and } \ J\subseteq\Rsupp(\mathbf{c})^\perp \\
I+J  \subseteq \Rsupp(\mathbf{c})^\perp \\
\mathbf{c} \in C(I+J)
\end{array} \]
Hence $C(I) \cap C(J) = C(I+J)$. \\
Now if $\mathbf{c} \in C(I)$ then $I \subseteq \Rsupp(\mathbf{c})^\perp$ so $I\cap J \subseteq (\Rsupp(\mathbf{c}))^\perp $. Hence $\mathbf{c} \in C(I\cap J)$. So $C(I) \subseteq C(I\cap J)$ and similarly $C(J) \subseteq C(I\cap J)$. Therefore $C(I)+C(J) \subseteq C(I\cap J)$.\\
Combining the above and using the modularity of dimension, we now have
\begin{eqnarray*}
l(I)+l(J) & = & \dim C(I) + \dim C(J) \\
 & = & \dim(C(I)\cap C(J))+\dim(C(I)+C(J)) \\
 & \leq & \dim(C(I+J))+\dim(C(I\cap J)) \\
 & = & l(I+J)+l(I\cap J)
\end{eqnarray*}
It follows that $r(I+J)+r(I\cap J) \leq r(I) + r(J)$. \\
We have shown that the function $r$ satisfies (r1),(r2),(r3), so we conclude that $(E,r)$ is indeed a $q$-matroid.
\end{proof}

\begin{corollary}
The rank of the $q$-matroid $M(C)$ associated to a rank metric code $C$ is $\dim C$.
\end{corollary}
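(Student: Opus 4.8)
The plan is to unwind the definitions: by the definition of the rank of a $q$-matroid, $r(M(C)) = r(E) = r(K^n)$, so it suffices to compute $r(K^n)$, which by Definition \ref{dH4} equals $\dim_L C_{K^n}$. I would give two short arguments, either of which suffices.

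First, the direct route via Corollary \ref{cJ4}. Taking $J = K^n$, we have $\dim J = n$ and hence $l(K^n) + r(K^n) = \dim C$. It remains to see that $l(K^n) = 0$, i.e. $C(K^n) = \mathbf{0}$. Since $(K^n)^\perp = \mathbf{0}$, Definition \ref{dJ2} gives $C(K^n) = \{\mathbf{c} \in C : \Rsupp(\mathbf{c}) \subseteq \mathbf{0}\}$; but $\Rsupp(\mathbf{c}) = \mathbf{0}$ forces $\wt_R(\mathbf{c}) = 0$, hence $M(\mathbf{c}) = 0$, hence $\mathbf{c} = \mathbf{0}$. So $l(K^n) = 0$ and $r(K^n) = \dim C$, as claimed.

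Alternatively, one can argue directly from Definition \ref{dH3}: for $J = K^n$ one may take the identity matrix as generator matrix $Y$, so $\pi_{K^n}(\mathbf{x}) = \mathbf{x} Y^T = \mathbf{x}$, whence $C_{K^n} = \pi_{K^n}(C) = C$ and $r(K^n) = \dim_L C_{K^n} = \dim C$. (More generally any generator matrix of $K^n$ is an invertible $n \times n$ matrix over $K$, so $\pi_{K^n}$ restricts to an isomorphism on $C$ and Lemma \ref{lJ3} already records that $\dim C_J$ is independent of the chosen generator matrix.)

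I do not expect any real obstacle here: the statement is an immediate corollary of the preceding lemmas once $J = K^n$ is plugged in, the only point worth a line being that the rank support of a nonzero codeword is nonzero. I would present the Corollary \ref{cJ4} argument as the main proof, since it reuses machinery already in place, and perhaps mention the $\pi_{K^n} = \mathrm{id}$ observation parenthetically.
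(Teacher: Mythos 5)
Your main argument is exactly the paper's proof: apply Corollary \ref{cJ4} with $J=E=K^n$ and observe that $E^\perp=\mathbf{0}$ forces $C(E)=\mathbf{0}$, hence $l(E)=0$ and $r(E)=\dim C$. The alternative observation that $\pi_{K^n}$ is the identity (so $C_{K^n}=C$) is also correct, but the paper does not need it; your proposal is fine as written.
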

\begin{proof}
We have that $r(M(C))=r(E)=\dim C-l(E)$ and also $E^\perp=\mathbf{0}$, so $C(E)=\mathbf{0}$ and $r(M(C))=\dim C$.
\end{proof}

\begin{corollary}
Let $L^\prime$ be a field extension of $L$ such that $L^\prime$ is Galois over $K$. Let $C\otimes L^\prime$ be the the $L^\prime$-linear code obtained by taking all $L^\prime$-linear combinations of words of $L$. Then the $q$-matroids associated to $C$ and $C\otimes L^\prime$ are the same.
\end{corollary}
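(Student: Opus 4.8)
The plan is to reduce the claim to an equality of rank functions. Both $q$-matroids have the same ground space, namely $E=K^n$, since $C\otimes L'$ is again an $L'$-linear code of length $n$; so it suffices to show that for every $K$-linear subspace $J\subseteq K^n$ the rank of $J$ computed in $M(C)$ equals the rank of $J$ computed in $M(C\otimes L')$. Write $r$ for the rank function of $M(C)$ and $r'$ for that of $M(C\otimes L')$, both as in Definition \ref{dH4}.

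Fix $J$ of dimension $t$ with generator matrix $Y$ over $K$, and let $G$ be a generator matrix of $C$ over $L$. By (the proof of) Lemma \ref{lJ3} together with Definition \ref{dH4}, $C_J$ is the $L$-row space of $GY^T$, so $r(J)=\dim_L C_J=\rk_L\!\big(GY^T\big)$. The key observation is that $G$ is also a generator matrix of $C\otimes L'$: by construction $C\otimes L'$ is the $L'$-row span of $G$, and the rows of $G$ stay $L'$-linearly independent because the rank of a matrix with entries in $L$ does not change when it is viewed over the extension field $L'$. In particular $\dim_{L'}(C\otimes L')=\dim_L C=k$, so the results of Section \ref{sec-codes} apply verbatim to $C\otimes L'$, giving $r'(J)=\dim_{L'}(C\otimes L')_J=\rk_{L'}\!\big(GY^T\big)$.

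Now $GY^T$ is literally one and the same matrix in both computations, and its entries lie in $L$ (as $G$ has entries in $L$ and $Y$ has entries in $K\subseteq L$). Since the rank of a matrix over a field is the maximal size of a nonvanishing minor, it is unchanged under passage to a field extension; hence $\rk_L(GY^T)=\rk_{L'}(GY^T)$, so $r(J)=r'(J)$ for every $J$, and the two $q$-matroids coincide.

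There is essentially no hard step here; the only points that require care are, first, that the hypothesis that $L'/K$ is Galois is precisely what makes $C\otimes L'$ a rank metric code in the sense of Section \ref{sec-codes}, so that $M(C\otimes L')$ is even defined; and second, that the argument must be routed through $C_J$ (equivalently, through $C(J)$ using Corollary \ref{cJ4}) rather than through the individual rank supports $\Rsupp(\mathbf{c})$, since a single codeword may well acquire a larger rank support after extending scalars — it is only the aggregate quantity $\dim C_J$ that is extension-invariant.
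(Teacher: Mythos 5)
Your proof is correct, but it takes a genuinely different route from the paper's. The paper works on the ``kernel'' side of the exact sequence of Lemma \ref{lJ3}: it proves the identity $(C(J))\otimes L' = (C\otimes L')(J)$ by an explicit two-inclusion argument with bases and the orthogonality criterion of Lemma \ref{lJ2}, concludes that $l(J)$ is unchanged under scalar extension, and then passes to $r(J)=k-l(J)$ via Corollary \ref{cJ4}. You instead work directly on the ``image'' side, which is where $r(J)$ is actually defined: $r(J)=\dim_L C_J=\rk(GY^T)$, the matrix $G$ remains a generator matrix of $C\otimes L'$, and the rank of the fixed matrix $GY^T$ is invariant under extension of scalars. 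Your argument is shorter and isolates the one piece of linear algebra doing the work (rank is computed by minors, hence is field-extension invariant); the paper's argument is longer but yields the stronger and independently useful fact that the construction $J\mapsto C(J)$ commutes with $\otimes L'$, not merely that its dimension is preserved. Your closing caveat is also well taken: the statement genuinely cannot be checked codeword-by-codeword via $\Rsupp(\mathbf{c})$, and only the aggregate dimensions $\dim C_J$ (equivalently $\dim C(J)$) behave well, which is exactly why both proofs route through one of these two subspaces.
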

\begin{proof}
We first show that $(C(I))\otimes L^\prime = (C\otimes L^\prime )(I)$. \\
Let $\mathbf{c} \in (C(I))\otimes L^\prime$.
Let $\mathbf{b}_1, \ldots , \mathbf{b}_l$ be a basis of $C(I)$ over $L$.
Then $\mathbf{b}_1, \ldots , \mathbf{b}_l$ is also a basis of $(C(I))\otimes L^\prime$ over $L^\prime$ by the definition of taking $\otimes L^\prime$.
Also, $\mathbf{b}_i \cdot \mathbf{x} =0$ for all $\mathbf{x} \in I$ by Lemma \ref{lJ2}.
There exist $\lambda _1, \ldots ,\lambda_l \in L^\prime$ such that
$\mathbf{c} = \sum_{i=1}^l \lambda_i \mathbf{b}_i$.
So by linearity $\mathbf{c} \cdot \mathbf{x} =0$ for all $\mathbf{x} \in I$,
hence $\mathbf{c}\in (C\otimes L^\prime )(I)$ by Lemma \ref{lJ2}.
Therefore  $(C(I))\otimes L^\prime \subseteq  (C\otimes L^\prime )(I)$.\\
Conversely, let $\mathbf{c} \in (C\otimes L^\prime  )(I)$.
Then $\mathbf{c} \cdot \mathbf{x} =0$ for all $\mathbf{x} \in I$.
Let $\mathbf{g}_1, \ldots , \mathbf{g}_k$ be a basis of $C$ over $L$.
Then $\mathbf{g}_1, \ldots , \mathbf{g}_k$ is also a basis of $C\otimes L^\prime$ over $L^\prime$.
There exist $\lambda _1, \ldots ,\lambda_k \in L^\prime$ such that
$\mathbf{c} = \sum_{i=1}^k \lambda_i\mathbf{g}_i$.
Let $\alpha_1, \ldots ,\alpha_m$ be a basis of $L^\prime $ over $L$.
Then for every $i$ there exist $\lambda _{i1}, \ldots ,\lambda_{im} \in L$ such that
$\lambda_i = \sum_{j=1}^m \lambda_{ij}\alpha_j$.
Let $\mathbf{x}\in I$. Then $\sum_{i=1}^k \lambda_{ij} \mathbf{g}_i \cdot \mathbf{x} \in L$ for all $j$,
$$
0 =\mathbf{c} \cdot \mathbf{x} =
\sum_{j=1}^m \left(\sum_{i=1}^k \lambda_{ij} \mathbf{g}_i \cdot \mathbf{x}\right) \alpha_j
$$
and $\alpha_1, \ldots ,\alpha_m$ is a basis of $L^\prime $ over $L$.
So $\sum_{i=1}^k \lambda_{ij} \mathbf{g}_i \cdot \mathbf{x}=0$ for all $j$ and all $\mathbf{x}\in I$.
Hence $\sum_{i=1}^k \lambda_{ij} \mathbf{g}_i \in C(I)$ for all $j$.
Therefore $\mathbf{c} \in (C(I))\otimes L^\prime$, and
$(C\otimes L^\prime )(I)  \subseteq (C(I))\otimes L^\prime$.\\
We conclude that $l(I)$, the dimension of $C(I)$ over $L$ is also the dimension of
$(C\otimes L^\prime  )(I)$ over $L^\prime$.
Hence the rank functions of the $q$-matroids $M(C)$ and $M((C\otimes L^\prime  ))$ are the same.
\end{proof}

\begin{example}\label{ex-code}
Let $L=\mathbb{F}_{8} $ and $K=\mathbb{F}_2 $. Let $a \in \mathbb{F}_{8}$ with $a^3=1+a$. Let $C$ be the rank metric code over $L$ with generator matrix
\[ G=\left(\begin{array}{cccc}
1 & a & 0 & 0 \\
0 & 1 & a & 0
\end{array}\right). \]
We can find the matroid associated to $C$ by finding its bases. They are independent, so their rank equals their dimension, which is $2$. These are the subspaces $J$ of $\mathbb{F}_2^4$ such that $l(J)=0$. This means $C(J)=\mathbf{0}$, i.e., there is no nonzero codeword such that $\Rsupp(\mathbf{c})\subseteq J^\perp$. Now $\wt_R(\mathbf{c})$ can not be $0$ unless $\mathbf{c}=\mathbf{0}$. It can only be $1$ if all nonzero entries in the codeword are the same: that can not happen. So if $\mathbf{c}$ is nonzero, the dimension of $\Rsupp(\mathbf{c})$ is at least $2$. On the other hand, all codewords have a zero in the last coordinate of their rank support. This means that if $J$ is perpendicular to $(0,0,0,1)$, there can not be a nonzero codeword that has $\Rsupp(\mathbf{c})\subseteq J^\perp$. The bases of $M(C)$ are thus the $2$-dimensional subspaces of $\mathbb{F}_2^4$ that do not contain $\langle0001\rangle$. This means the subspace $\langle0001\rangle$ is a loop. In fact, this is the matroid from Example \ref{ex-2}.
\end{example}

Using the theory of rank metric codes, we can learn more about the function $l(J)$.

\begin{definition}
Let $C$ be an $L$-linear code of length $n$. Then the \emph{dual} of $C$, notated by $C^\perp$, consists of all vectors of $L^n$ that are orthogonal to all codewords of $C$.
\end{definition}

The next Proposition is the $q$-analogue of the well-known fact that the minimum distance is the minimal number of dependent columns in a parity check matrix of the code.

\begin{proposition}\label{pJ}
Let $C$ be an $L$-linear code of length $n$. Then $t<d_R(C^\perp)$ if and only if $\dim_{L} (C_J)=t$ for all $K $-linear subspaces $J$ of $K^n$ of dimension $t$.
\end{proposition}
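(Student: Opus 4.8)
The statement is the rank-metric analogue of the classical fact that, if $G$ is a generator matrix of $C$ (equivalently a parity-check matrix of $C^\perp$), then $d(C^\perp)>t$ holds if and only if every $t$ columns of $G$ are linearly independent, if and only if the projection of $C$ onto any $t$ coordinates fills the whole $t$-dimensional space. The plan is to follow that proof, replacing coordinate $t$-subsets by arbitrary $t$-dimensional $K$-subspaces $J$ of $K^n$ and the coordinate projection by the map $\pi_J$ of Definition \ref{dH3}.

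First I would trivialize the right-hand side. Since $C_J=\pi_J(C)\subseteq L^t$, we always have $\dim_L C_J\le t$, so "$\dim_L C_J=t$ for all $t$-dimensional $J$" is precisely the negation of the statement: there exist a $t$-dimensional $J$, with generator matrix $Y$, and a nonzero $\mathbf{w}\in L^t$ such that $\pi_J(\mathbf{c})\cdot\mathbf{w}=0$ for all $\mathbf{c}\in C$. Unwinding $\pi_J(\mathbf{c})\cdot\mathbf{w}=(\mathbf{c}Y^T)\cdot\mathbf{w}=\mathbf{c}\cdot(\mathbf{w}Y)$ turns this into the requirement $\mathbf{w}Y\in C^\perp$, and $\mathbf{w}Y\neq\mathbf{0}$ because $Y$ has rank $t$ over $L$ as well as over $K$.

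The one genuinely new point is the following identification: for a $t$-dimensional $K$-subspace $J$ of $K^n$ with generator matrix $Y$, the set $\{\mathbf{w}Y:\mathbf{w}\in L^t\}$ equals $\{\mathbf{v}\in L^n:\Rsupp(\mathbf{v})\subseteq J\}$. One inclusion is immediate, since every coordinate of $\mathbf{w}Y$ is an $L$-combination of the ($K$-valued) entries of the rows of $Y$, so the rows of $M_B(\mathbf{w}Y)$ are $K$-combinations of the rows of $Y$ and therefore lie in $J$. For the converse, if $\Rsupp(\mathbf{v})\subseteq J$ then every row of $M_B(\mathbf{v})$ is a $K$-combination of the rows $\mathbf{y}_1,\dots,\mathbf{y}_t$ of $Y$; collecting these coefficients against the basis $B$ of $L$ over $K$ produces $\mathbf{w}\in L^t$ with $\mathbf{v}=\mathbf{w}Y$.

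Combining the last two steps, the right-hand side of the proposition fails if and only if $C^\perp$ contains a nonzero vector $\mathbf{v}$ with $\Rsupp(\mathbf{v})$ contained in some $t$-dimensional $K$-subspace; and since $\Rsupp(\mathbf{v})$ is itself a $K$-subspace of dimension $\wt_R(\mathbf{v})$, such a containing subspace exists if and only if $\wt_R(\mathbf{v})\le t$. Hence the right-hand side fails if and only if $d_R(C^\perp)\le t$, which is the contrapositive of the asserted equivalence. The main obstacle is the identification in the third paragraph, especially its converse inclusion: this is where one must pass carefully between the matrix description $M_B(\cdot)$ over $K$ and the vector description over $L$ through the fixed basis $B$. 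Everything else is routine linear algebra over $L$.
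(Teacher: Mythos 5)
Your proof is correct. Note, though, that the paper does not actually prove this proposition: it simply cites Theorem~1 of Gabidulin's 1985 paper, so there is no internal argument to compare against. What you have written is a sound, self-contained proof along the classical Hamming-metric lines (minimum distance of $C^\perp$ versus independence of columns of a generator matrix of $C$), with the one genuinely $q$-analogue ingredient correctly identified and correctly verified: the bijection $\mathbf{w}\mapsto\mathbf{w}Y$ between $L^t$ and $\{\mathbf{v}\in L^n:\Rsupp(\mathbf{v})\subseteq J\}$, which you establish by passing coefficients through the basis $B$ of $L$ over $K$ in both directions. The injectivity of $\mathbf{w}\mapsto\mathbf{w}Y$ (because the $K$-rank $t$ of $Y$ equals its $L$-rank) and the reduction of ``$\dim_L C_J<t$'' to the existence of a nonzero annihilating functional $\mathbf{w}$ are both handled properly. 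The only points I would make explicit in a final write-up are (i) the implicit assumption $t\le n$, needed to extend $\Rsupp(\mathbf{v})$ to a $t$-dimensional subspace $J$ in the last step, and (ii) the observation (already proved in Lemma~\ref{lJ3}) that $\dim_L C_J$ is independent of the choice of generator matrix $Y$, so that fixing one $Y$ per $J$ loses nothing. Since the paper outsources this result, your argument is a useful addition rather than a redundant one; it also dovetails with the paper's own Definition~\ref{dJ2} and Lemma~\ref{lJ2}, since your characterization of the failure of the right-hand side is exactly the statement that $C^\perp(J^\perp)\neq\mathbf{0}$ for some $t$-dimensional $J$.
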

\begin{proof}
See \cite[Theorem 1]{gabidulin:1985}.
\end{proof}

\begin{lemma}\label{lJ5}
Let $C$ be an $L$-linear code of length $n$.
Let $d_R$ and $d_R^{\perp}$ be the minimum rank distance of $C$ and $C^{\perp}$, respectively.
Let $J$ be a $K$-linear subspace of $K ^n$ of dimension $t$. Let $l(J) = \dim_{L} C(J)$.
Then
$$
l(J)=\left\{\begin{array}{cl}
k-t & \text{for all } t<d^{\perp}_R \\
0 & \text{for all } t>n-d_R
\end{array}\right.
$$
\end{lemma}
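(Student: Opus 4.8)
The plan is to treat the two ranges of $t$ separately: the first follows from Proposition \ref{pJ} together with the identity $l(J)+r(J)=k$ of Corollary \ref{cJ4}, and the second from a direct rank-weight argument.

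For the case $t<d_R^{\perp}$: I would apply Proposition \ref{pJ} to the code $C$ itself, whose dual $C^\perp$ has minimum rank distance $d_R^{\perp}=d_R(C^\perp)$. That proposition says that $t<d_R(C^\perp)$ is equivalent to $\dim_{L}(C_J)=t$ for every $K$-linear subspace $J$ of $K^n$ of dimension $t$; in the notation of Definition \ref{dH4} this reads $r(J)=t$. By Corollary \ref{cJ4} we have $l(J)+r(J)=k$, hence $l(J)=k-t$, which is the first line of the claimed formula.

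For the case $t>n-d_R$: I would show that $C(J)=\mathbf{0}$. Suppose $\mathbf{c}\in C(J)$ is nonzero. By Definition \ref{dJ2} we have $\Rsupp(\mathbf{c})\subseteq J^{\perp}$, so $\wt_R(\mathbf{c})=\dim_K\Rsupp(\mathbf{c})\leq\dim_K J^{\perp}=n-t$. The hypothesis $t>n-d_R$ is exactly $n-t<d_R$, so $\wt_R(\mathbf{c})<d_R$, contradicting the fact that every nonzero codeword of $C$ has rank weight at least $d_R$. Therefore no such $\mathbf{c}$ exists, $C(J)=\mathbf{0}$, and $l(J)=0$.

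The argument is essentially routine once Proposition \ref{pJ} and Corollary \ref{cJ4} are in hand; the only points needing a little care are bookkeeping, namely invoking Proposition \ref{pJ} for $C$ (not $C^\perp$) so that $d_R^{\perp}$ appears as the threshold, and using that $\dim_K\Rsupp(\mathbf{c})$ is precisely the rank weight $\wt_R(\mathbf{c})$. I do not expect a genuine obstacle.
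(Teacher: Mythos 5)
Your proposal is correct and follows essentially the same route as the paper: the first case is read off from Proposition \ref{pJ} combined with Corollary \ref{cJ4}, and the second case is the same rank-weight contradiction (the paper phrases it as $t\leq n-\wt_R(\mathbf{c})$, which is equivalent to your $\wt_R(\mathbf{c})\leq n-t$). No gaps.
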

\begin{proof}
The first inequality is a direct consequence of Proposition \ref{pJ}.\\
Let $t>n-d_R$ and let $\mathbf{c}\in C(J)$. Then $J$ is contained in the orthoplement of $\Rsupp (\mathbf{c})$, so $t\leq n-\wt_R(\mathbf{c})$.
It follows that $\wt_R(\mathbf{c})\leq n-t<d_R$, so $\mathbf{c}$ is the zero word and therefore $l(J)=0$.
\end{proof}

\begin{example}\label{ex-uniform-MRD}
Let $m\geq n$ and let $C$ be an $L$-linear code of length $n$, dimension $k$ and minimum distance $d_R=n-k+1$. Such a code is called an MRD (\emph{maximum rank distance}) code. Gabidulin \cite{gabidulin:1985} constructed such codes over finite fields for all $n$, $k$ and $q$. The construction was generalized to characteristic $0$ and rational function fields by Augot, Loidreau and Robert \cite{augot:2013,augot:2014}. The dual of an MRD code is again an MRD code and its minimum distance is therefore $d_R^\perp=k+1$. If we apply Lemma \ref{lJ5}, we find that the function $l(J)$ is completely determined in terms of the dimension $t$ of $J$:
\[ l(J)=\left\{\begin{array}{cl}
k-t & \text{for all } t\leq k \\
0 & \text{for all } t>k
\end{array}\right. \]
This means that also $r(J)$ is completely determined:
\[ r(J)=\left\{\begin{array}{cl}
t & \text{for all } t\leq k \\
k & \text{for all } t>k
\end{array}\right. \]
As we have seen in Example \ref{ex-uniform}, this is the rank function of the uniform $q$-matroid $U_{k,n}$.
\end{example}

\section{Truncation}

We present the notion of truncation of a $q$-matroid, so that we can use it in our proofs concerning axioms for bases. From now on we denote by $\mathcal{I}(M)$ the independent spaces of the $q$-matroid $M$, and if a $q$-matroid is defined by $\mathcal{I}$, we denote it by $(M,\mathcal{I})$.

\begin{definition}
Let $M=(E,\mathcal{I})$ be a $q$-matroid with $r(E)\geq 1$. The \emph{truncated matroid} $\tau(M)$ is a $q$-matroid with ground space $E$ and independent spaces those members of $\mathcal{I}$ that have dimension at most $r(M)-1$; so
\[ \mathcal{I}(\tau(M))=\{I\in\mathcal{I}:\dim I<r(M)\}. \]
\end{definition}

Because the dimension of an independent space is at most $r(M)$, this means that we simply remove all maximal independent spaces from $\mathcal{I}(M)$ to get $\mathcal{I}(\tau(M))$.

\begin{theorem}\label{t-trunc}
The truncation $\tau(M)$ is indeed a $q$-matroid, that is, $\mathcal{I}(\tau(M))$ satisfies (I1),(I2),(I3),(I4).
\end{theorem}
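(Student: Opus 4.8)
The plan is to verify the four axioms (I1)--(I4) directly for the family $\mathcal{I}(\tau(M))=\{I\in\mathcal{I}(M):\dim I<r(M)\}$, using that $\mathcal{I}(M)$ already satisfies all four (Theorem~\ref{indep-rank}) and that Propositions~\ref{p-maxindep1}--\ref{p-maxindep3} apply to $M$. The first three are pure bookkeeping. Since $r(M)\geq1$, the space $\mathbf{0}\in\mathcal{I}(M)$ has dimension $0<r(M)$, so $\mathbf{0}\in\mathcal{I}(\tau(M))$, giving (I1). For (I2), a subspace $I$ of a member $J$ of $\mathcal{I}(\tau(M))$ lies in $\mathcal{I}(M)$ by (I2) for $M$ and satisfies $\dim I\leq\dim J<r(M)$. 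For (I3), given $I,J\in\mathcal{I}(\tau(M))$ with $\dim I<\dim J$, axiom (I3) for $M$ yields a $1$-dimensional $x\subseteq J$, $x\not\subseteq I$, with $I+x\in\mathcal{I}(M)$, and $\dim(I+x)=\dim I+1\leq\dim J<r(M)$, so $I+x\in\mathcal{I}(\tau(M))$.

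The substance is (I4). I would first record that the rank function of $\tau(M)$ is $r_\tau(A)=\min\{r(A),\,r(M)-1\}$: when $r(A)\leq r(M)-1$ a maximal independent subspace of $A$ in $M$ already has dimension $<r(M)$ and survives into $\tau(M)$, while when $r(A)=r(M)$ one trims such a space to a subspace of dimension $r(M)-1$, which is independent by (I2) and of maximal dimension in $\mathcal{I}(\tau(M))$. Using (I3) for $\tau(M)$ (just established), a maximal independent subspace of $A$ in $\tau(M)$ is precisely an $I\in\mathcal{I}(\tau(M))$ with $I\subseteq A$ and $\dim I=r_\tau(A)$.

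Now fix $A,B\subseteq E$ and maximal independent subspaces $I\subseteq A$, $J\subseteq B$ of $\tau(M)$; I would split into three cases according to where $r(A+B),r(A),r(B)$ sit relative to $r(M)$, noting $r(A),r(B)\leq r(A+B)\leq r(M)$. If $r(A+B)\leq r(M)-1$, then also $r(A),r(B)\leq r(M)-1$, so $I,J$ are maximal independent subspaces of $A,B$ in $M$; Proposition~\ref{p-maxindep3} gives a maximal independent subspace $K$ of $A+B$ in $M$ with $K\subseteq I+J$, and $\dim K=r(A+B)=r_\tau(A+B)$, so $K$ is what we want. If $r(A+B)=r(M)$ and $r(A)=r(M)$ (the case $r(B)=r(M)$ being symmetric), then $\dim I=r_\tau(A)=r(M)-1=r_\tau(A+B)$ and $I\subseteq A\subseteq A+B$ with $I\subseteq I+J$, so $K=I$ works outright. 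Finally, if $r(A+B)=r(M)$ but $r(A),r(B)<r(M)$, then $I,J$ are again maximal independent in $M$, so Proposition~\ref{p-maxindep3} yields a maximal independent subspace $K^{*}$ of $A+B$ in $M$ with $K^{*}\subseteq I+J$ and $\dim K^{*}=r(M)$; any subspace $K$ of $K^{*}$ of codimension $1$ then lies in $\mathcal{I}(\tau(M))$ by (I2), is contained in $A+B$ and in $I+J$, and has dimension $r(M)-1=r_\tau(A+B)$, hence is maximal independent in $A+B$ in $\tau(M)$.

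The only genuinely delicate point is the case bookkeeping in (I4): seeing that the truncation of a maximal independent subspace of $A$ is itself maximal exactly when $r(A)$ has already reached the ceiling $r(M)$, and that in the remaining regime one simply invokes Proposition~\ref{p-maxindep3} inside $M$ and shaves off one dimension. Everything else, including (I1)--(I3), follows straight from the definitions.
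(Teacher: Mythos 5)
Your proof is correct. For (I1)--(I3) it matches the paper's argument exactly: the paper likewise observes that $\mathbf{0}$ survives truncation, that dimensions only drop under (I2), and that (I3) for $M$ produces $I+x$ with $\dim(I+x)\leq\dim J<r(M)$. Where you genuinely diverge is (I4): the paper dispatches it with the single remark that ``it is enough to note that $\mathcal{I}(\tau(M))\subseteq\mathcal{I}(M)$,'' which glosses over the real issue, namely that \emph{maximal} independent subspaces of $A$ in $\tau(M)$ need not be maximal in $M$ once $r(A)$ hits the ceiling $r(M)$. Your three-way case split on whether $r(A+B)$, $r(A)$, $r(B)$ reach $r(M)$ --- invoking Proposition~\ref{p-maxindep3} inside $M$ when the witnesses $I,J$ are still maximal there, taking $K=I$ outright when $A$ already attains full rank, and shaving one dimension off the output of Proposition~\ref{p-maxindep3} in the mixed case --- supplies exactly the bookkeeping the paper omits, together with the identification $r_\tau(A)=\min\{r(A),r(M)-1\}$ that the paper only records afterwards as a corollary. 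So your argument is not a different method so much as a completed version of the paper's: same ingredients, but with the one nontrivial axiom actually verified rather than asserted.
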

\begin{proof}
Because $r(M)\geq 1$, we have that $\mathbf{0}\in\mathcal{I}(\tau(M))$ hence (I1) holds. Let $J\in\mathcal{I}(\tau(M))$ and $I\subseteq J$. Then $\dim I\leq\dim J$ and $\dim J<r(M)$, so $\dim I<r(M)$ and $I\in\mathcal{I}(\tau(M))$. This proves (I2). For (I3) and (I4), it is enough to note that $\mathcal{I}(\tau(M))\subseteq\mathcal{I}(M)$. We conclude that $\tau(M)$ is indeed a $q$-matroid.
\end{proof}

We have the following straightforward description of the rank function of the truncated matroid:

\begin{corollary}
Let $M=(E,r)$ be a $q$-matroid with $r(M)\geq 1$. Then the rank function $r_\tau$ of the truncation $\tau(M)$ is give by
\[ r_\tau(A)=\min\{r(A),r(M)-1\}. \]
\end{corollary}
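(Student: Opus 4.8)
The plan is to read off $r_\tau(A)$ directly from the description $\mathcal{I}(\tau(M))=\{I\in\mathcal{I}(M):\dim I<r(M)\}$ together with the formula $r_{\mathcal{I}}(A)=\max\{\dim I:I\in\mathcal{I},\,I\subseteq A\}$ of Theorem \ref{indep-rank}. This formula is available precisely because Theorem \ref{t-trunc} tells us $\tau(M)$ is a $q$-matroid, so its rank function is indeed recovered from its independent spaces in this way. I would set $m=\min\{r(A),r(M)-1\}$ and prove the two inequalities $r_\tau(A)\le m$ and $r_\tau(A)\ge m$ separately.

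For the upper bound, let $I\in\mathcal{I}(\tau(M))$ with $I\subseteq A$. Then $\dim I<r(M)$, so $\dim I\le r(M)-1$; and since $\mathcal{I}(\tau(M))\subseteq\mathcal{I}(M)$, the space $I$ is independent in $M$ and contained in $A$, whence $\dim I\le r(A)$. Thus $\dim I\le m$, and taking the maximum over all such $I$ gives $r_\tau(A)\le m$.

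For the lower bound, choose a maximal independent subspace $I$ of $A$ in $M$, so $\dim I=r(A)\ge m$. Pick any subspace $I_0\subseteq I$ of dimension exactly $m$ (possible since $m\le\dim I$ and we are working in a vector space). By (I2) for $M$ we have $I_0\in\mathcal{I}(M)$, and since $\dim I_0=m\le r(M)-1<r(M)$ we even get $I_0\in\mathcal{I}(\tau(M))$. As $I_0\subseteq I\subseteq A$, this yields $r_\tau(A)\ge\dim I_0=m$. Combining the two inequalities gives $r_\tau(A)=m$, as claimed.

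I do not expect any real obstacle here: the argument is essentially bookkeeping with the definitions of truncation and of the rank function associated to a family of independent spaces. The one conceptual point worth keeping in mind is that this corollary must appear after Theorem \ref{t-trunc}, since ``the rank function of $\tau(M)$'' only makes sense once $\tau(M)$ is known to be a $q$-matroid; with that established, the computation above is immediate.
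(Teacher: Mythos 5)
Your proof is correct; the paper itself omits the argument, presenting the corollary as a straightforward consequence of Theorem \ref{t-trunc} and the formula $r_{\mathcal{I}}(A)=\max\{\dim I: I\in\mathcal{I}, I\subseteq A\}$, and your two-inequality computation is exactly the intended bookkeeping. The only step worth flagging is that your lower bound uses $\dim I=r(A)$ for a maximal independent subspace $I$ of $A$, which is the identity $r=r_{\mathcal{I}_r}$ from Theorem \ref{indep-rank}, and you have correctly invoked Theorem \ref{t-trunc} so that the formula applies to $\tau(M)$ at all.
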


This means that for all subspaces $A$ of $E$ with $r(A)<r(M)$, we have $r(A)=r_\tau(A)$. Only for $r(A)=r(M)$ the rank gets down: $r_\tau(A)=r(A)-1$.

\begin{example}
Let $U_{k,n}$ be the uniform $q$-matroid of Example \ref{ex-uniform}. The truncation of $U_{k,n}$ has as independent spaces all subspaces of dimension at most $k-1$, so it is equal to $U_{k-1,n}$.
\end{example}

\begin{example}
Let $M$ the $q$-matroid of Example \ref{ex-2}. The truncation has as independent spaces $\mathbf{0}$ and all $1$-dimensional subspaces except $\langle0001\rangle$.
\end{example}

\section{Bases}

Remark \ref{r-axioms} about the axioms for independent spaces, holds for bases as well: if a set of axioms is invariant under embedding the family of bases $\mathcal{B}$ in a space of higher dimension, then it can not completely determine a $q$-matroid. This is why we need a fourth axiom.

\begin{theorem}\label{indep-bases}
Let $E$ be a finite dimensional space. If $\mathcal{B}$ is a family of subspaces of $E$ that satisfies the conditions:
\begin{itemize}
\item[(B1)] $\mathcal{B}\neq\emptyset$
\item[(B2)] If $B_1,B_2\in\mathcal{B}$ and $B_1\subseteq B_2$, then $B_1=B_2$.
\item[(B3)] If $B_1,B_2\in\mathcal{B}$, then for every codimension $1$ subspace $A$ of $B_1$ with $B_1\cap B_2\subseteq A$ there is a $1$-dimensional subspace $y$ of $B_2$ with $A+y\in\mathcal{B}$.
\item[(B4)] Let $A,B\subseteq E$ and let $I,J$ be maximal intersections of some bases with $A$ and $B$, respectively. Then there is a maximal intersection of a basis and $A+B$ that is contained in $I+J$.
\end{itemize}
and $\mathcal{I}$ is the family defined by $\mathcal{I}_\mathcal{B}=\{I:\exists B\in\mathcal{B},I\subseteq B \}$, then $(E,\mathcal{I}_\mathcal{B})$ is a $q$-matroid and its family of bases is $\mathcal{B}$. \\
Conversely, if $\mathcal{B}_\mathcal{I}$ is the family of bases of a $q$-matroid $(E,\mathcal{I})$, then $\mathcal{B}_\mathcal{I}$ satisfies the conditions (B1),(B2),(B3),(B4) and $\mathcal{I}=\mathcal{I}_{\mathcal{B}_\mathcal{I}}$.
\end{theorem}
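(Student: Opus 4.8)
The strategy is to reduce everything to the already-established equivalence between rank functions and independent-space families (Theorem \ref{indep-rank}), so that we never have to verify (r1)--(r3) directly. Concretely, I would structure the argument in two directions. For the forward direction, assume $\mathcal{B}$ satisfies (B1)--(B4) and set $\mathcal{I}_\mathcal{B}=\{I:\exists B\in\mathcal{B},\ I\subseteq B\}$; the goal is to show $\mathcal{I}_\mathcal{B}$ satisfies (I1)--(I4), whence $(E,\mathcal{I}_\mathcal{B})$ is a $q$-matroid by Theorem \ref{indep-rank}, and then to check that the bases of this $q$-matroid are exactly $\mathcal{B}$. For the converse, start from a $q$-matroid $(E,\mathcal{I})$, let $\mathcal{B}_\mathcal{I}$ be its maximal independent spaces, and verify (B1)--(B4); the last of these, (B4), should follow from Proposition \ref{p-maxindep3} once one observes that a ``maximal intersection of a basis with $A$'' is the same thing as a maximal independent subspace of $A$.

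The key lemma I would isolate first is precisely that identification: in any $q$-matroid, a subspace $I\subseteq A$ is a maximal independent subspace of $A$ if and only if $I=B\cap A$ for some basis $B$ and this intersection has maximal dimension among all such. One inclusion is immediate from (I2); for the other, given a maximal independent $I\subseteq A$, use (I3)-type extension (as in the proof of Theorem \ref{indep-rank}, Part 2) to grow $I$ to a basis $B$, and then argue $B\cap A=I$ by maximality. With this lemma in hand, (B4) is a verbatim restatement of Proposition \ref{p-maxindep3}, and the definition of $r_{\mathcal{I}_\mathcal{B}}$ from $\mathcal{I}_\mathcal{B}$ via maximal dimensions matches the rank function built from bases.

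For the forward direction the real work is verifying (I3) for $\mathcal{I}_\mathcal{B}$ from the basis-exchange axiom (B3): given $I,J\in\mathcal{I}_\mathcal{B}$ with $\dim I<\dim J$, one must produce a $1$-dimensional $x\subseteq J$, $x\not\subseteq I$, with $I+x\in\mathcal{I}_\mathcal{B}$. By Proposition \ref{indep-prime} it suffices to treat the case $\dim J=\dim I+1$. I would pick bases $B_I\supseteq I$ and $B_J\supseteq J$; if $I$ is already contained in $B_J$ we are done since then $J$ itself witnesses the extension, so assume not, choose a codimension-$1$ subspace $A$ of $B_I$ containing $I$ (this requires $\dim B_I>\dim I$, which holds unless $I$ is itself a basis, a case handled separately via (B2)) arranged so that $B_I\cap B_J\subseteq A$, and apply (B3) to get $y\subseteq B_J$ with $A+y\in\mathcal{B}$; then $I+y'\in\mathcal{I}_\mathcal{B}$ for a suitable $1$-dimensional $y'$, and one checks it can be taken inside $J$. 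Getting the bookkeeping of which codimension-$1$ subspace to choose, and ensuring the exchanged element lands in $J$ rather than merely in $B_J$, is the main obstacle: it is the $q$-analogue of the subtle part of the matroid basis-exchange-implies-independence-axiom argument, and one may need to iterate the exchange (as in Proposition \ref{p-maxindep1}) or invoke (B4)/truncation to control dimensions. I would also use (B2) to rule out degenerate cases where a basis properly contains another. Once (I1)--(I4) are established, Theorem \ref{indep-rank} finishes the forward direction, and a short argument that the maximal members of $\mathcal{I}_\mathcal{B}$ are exactly $\mathcal{B}$ (using (B2) for ``$\supseteq$'' and (B1)/(I3) for ``$\subseteq$'') closes the loop.
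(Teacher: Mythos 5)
Your overall architecture matches the paper's: a three\-part cryptomorphism argument routed through Theorem \ref{indep-rank}, with (B4) and (I4) identified via the observation that a maximal intersection of a basis with $A$ is the same thing as a maximal independent subspace of $A$. Your ``key lemma'' is exactly the paper's justification of that step, and your argument for it (extend a maximal independent $I\subseteq A$ to a basis $B$ by repeated (I3), then conclude $B\cap A=I$ since $B\cap A$ is independent and contains $I$) is correct. The converse direction, the verification of (I1), (I2), and the mutual-inverse bookkeeping are all fine at the level of detail you give.

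The genuine gap is in the step you yourself flag: deriving (I3) for $\mathcal{I}_\mathcal{B}$ from (B3). Your sketch applies (B3) to bases $B_I\supseteq I$ and $B_J\supseteq J$ and produces a one-dimensional $y\subseteq B_J$ with $A+y\in\mathcal{B}$, hence $I+y\in\mathcal{I}_\mathcal{B}$; but (I3) demands $y\subseteq J$, and your $y$ only lands in the ambient basis $B_J$. You name this obstacle and offer ``iterate the exchange or invoke (B4)/truncation'' without committing to a mechanism, so the forward direction is not actually proved. (A smaller unaddressed point: a codimension-one $A\subseteq B_I$ containing both $I$ and $B_I\cap B_J$ need not exist when $I+(B_I\cap B_J)=B_I$; in that case one instead finds $x\subseteq B_I\cap B_J$ with $x\not\subseteq I$ and $I+x\subseteq B_I$ directly, but again $x$ lies only in $B_J$, not necessarily in $J$.) The paper closes the gap exactly along the line you hint at: it first reduces, by truncating until the common dimension of the bases drops to $\dim J$, to the case where $J$ itself is a basis; then $B_J=J$ and the $y$ produced by (B3) automatically lies in $J$. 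If you take that route, be explicit about what truncation means before you know $(E,\mathcal{I}_\mathcal{B})$ is a $q$-matroid --- the paper cites Theorem \ref{t-trunc}, which is stated for $q$-matroids, so to avoid circularity you should either truncate $\mathcal{B}$ directly and recheck (B1)--(B3) for the truncated family, or restructure the induction. Until the ``$y\subseteq J$'' issue is settled by one concrete argument, the proof is incomplete.
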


\begin{remark}\label{r-ynotinB1}
In property (B3), it can not happen that $y\subseteq B_1$. Since by assumption $y\subseteq B_2$, this means that $y\subseteq B_1\cap B_2\subseteq A$. But then $A+y=A$, but $A$ can not be a basis because of (B2). So $y\notin B_1$.
\end{remark}

\begin{example}
Let $M$ be the $q$-matroid of Example \ref{ex-2}. The bases are the subspaces of dimension $2$ that do not contain $\langle0001\rangle$. We illustrate the property (B3). Let $B_1=\langle1100,0010\rangle$ and $B_2=\langle1010,0100\rangle$ . Then the intersection $B_1\cap B_2$ is $\langle1110\rangle$. This means we only have one choice for a codimension $1$ subspace of $B_1$ that contains $B_1\cap B_2$: it has to be $\langle1110\rangle$. If we add either $\langle1010\rangle$ or $\langle0100\rangle$ we get $B_2$, which is a basis.
\end{example}

Before proving the theorem, we first prove a slight variation of the axioms.
\begin{proposition}
Let $E$ be a finite dimensional space and let $\mathcal{B}$ be a family of subspaces of $E$.
Consider the condition:
\begin{itemize}
\item[(B2')] If $B_1,B_2\in\mathcal{B}$, then $\dim B_1=\dim B_2$.
\end{itemize}
The family $\mathcal{B}$ satisfies (B1),(B2),(B3) if and only if $\mathcal{B}$ satisfies (B1),(B2'),(B3).
\end{proposition}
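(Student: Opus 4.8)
The plan is to prove the two implications separately. Since both directions retain (B1) and (B3) unchanged, the entire content is the interchangeability of (B2) and (B2') in the presence of (B1) and (B3).

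The direction (B1),(B2'),(B3) $\Rightarrow$ (B1),(B2),(B3) is immediate and uses only (B2'): if $B_1,B_2\in\mathcal{B}$ with $B_1\subseteq B_2$, then (B2') gives $\dim B_1=\dim B_2$, and a subspace contained in another subspace of the same dimension must equal it, so $B_1=B_2$.

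For the converse, (B1),(B2),(B3) $\Rightarrow$ (B2'), I would argue by contradiction with an exchange argument. Suppose $\mathcal{B}$ contains members of different dimensions, and call a pair $(B_1,B_2)\in\mathcal{B}\times\mathcal{B}$ \emph{bad} if $\dim B_1<\dim B_2$. The value $\dim(B_1\cap B_2)$ lies in the finite set $\{0,1,\ldots,\dim E\}$, so among all bad pairs I may choose one for which $\dim(B_1\cap B_2)$ is maximal (this is legitimate even if $\mathcal{B}$ is infinite, since it is the set of achieved values that we maximize over). First note that $B_1\cap B_2\neq B_1$: otherwise $B_1\subseteq B_2$ and (B2) would force $B_1=B_2$, contradicting $\dim B_1<\dim B_2$. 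Hence $\dim(B_1\cap B_2)\leq\dim B_1-1$, so I can pick a codimension $1$ subspace $A$ of $B_1$ with $B_1\cap B_2\subseteq A$. Applying (B3) to $B_1$, $B_2$ and this $A$ produces a $1$-dimensional subspace $y\subseteq B_2$ with $A+y\in\mathcal{B}$.

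The key step, which I expect to need the most care, is to verify that $(A+y,B_2)$ is again a bad pair but with strictly larger intersection with $B_2$, contradicting the choice of $(B_1,B_2)$. For this one needs $y\not\subseteq B_1$: if $y\subseteq B_1$, then $y\subseteq B_1\cap B_2\subseteq A$, so $A+y=A$ would be a member of $\mathcal{B}$ properly contained in $B_1\in\mathcal{B}$, violating (B2) --- this is precisely the reasoning of Remark \ref{r-ynotinB1}. Hence $y\not\subseteq A$, so $\dim(A+y)=\dim A+1=\dim B_1<\dim B_2$, and $(A+y,B_2)$ is a bad pair. Moreover $B_1\cap B_2\subseteq A\subseteq A+y$ and $B_1\cap B_2\subseteq B_2$, while $y\subseteq A+y$ and $y\subseteq B_2$, so $(B_1\cap B_2)+y\subseteq (A+y)\cap B_2$; since $y\not\subseteq B_1\cap B_2$ the left-hand side has dimension $\dim(B_1\cap B_2)+1$, whence $\dim((A+y)\cap B_2)\geq\dim(B_1\cap B_2)+1$. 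This contradicts maximality, so no bad pair exists and (B2') holds.
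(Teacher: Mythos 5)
Your proof is correct and takes essentially the same route as the paper: the substantive direction is the same exchange argument via (B3), using the observation that $y\not\subseteq B_1$ (the paper's Remark~\ref{r-ynotinB1}) to see that the exchanged space $A+y$ keeps the dimension of $B_1$ while its intersection with $B_2$ grows. Your extremal formulation (a bad pair maximizing $\dim(B_1\cap B_2)$) is simply a rigorous packaging of the paper's more informal ``use (B3) multiple times'' iteration.
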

\begin{proof}
$\Leftarrow$: It is clear that (B2') implies (B2). \\
$\Rightarrow$: Let $B_1,B_2\in\mathcal{B}$. Then use (B3) multiple times to form $B_1^\prime$, of the same dimension as $B_1$, that contains only $1$-dimensional subspaces that are also in $B_2$. So $B_1^\prime\subseteq B_2$ and by (B2) they must be equal. So $\dim B_1^\prime=\dim B_2$ and since $\dim B_1=\dim B_1^\prime$, we have $\dim B_1=\dim B_2$ and thus (B2').
\end{proof}

\begin{proof}[Proof (Theorem \ref{indep-bases})]
As with Theorem \ref{indep-rank}, the proof has three parts:
\begin{enumerate}
\item $\mathcal{I}\to\mathcal{B}$. Given $\mathcal{I}$ with properties (I1),(I2),(I3),(I4), define $\mathcal{B}$ as the family of independent spaces that are maximal with respect to inclusion and prove (B1),(B2),(B3),(B4).
\item $\mathcal{B}\to\mathcal{I}$. Given $\mathcal{B}$ with properties (B1),(B2),(B3),(B4), prove that the properties (I1),(I2),(I3),(I4) hold for $\mathcal{I}=\{I:\exists B\in\mathcal{B},I\subseteq B \}$.
\item The first two are each others inverse, that is:
$\mathcal{I}\to\mathcal{B}\to\mathcal{I}^\prime$ implies
$\mathcal{I}=\mathcal{I}^\prime$, and $\mathcal{B}\to\mathcal{I}\to\mathcal{B}^\prime$ implies $\mathcal{B}=\mathcal{B}^\prime$.
\end{enumerate}
$\bullet$ Part 1. Let $M=(E,\mathcal{I})$ be a $q$-matroid and define $\mathcal{B}$ to be the family of subspaces $B$ that are subspaces of $\mathcal{I}$ of maximal dimension, i.e.,
$\mathcal{B}=\{B\in\mathcal{I}:\forall B^\prime\in\mathcal{I}, B\subseteq B^\prime\Rightarrow B=B^\prime\}$.
We need to show that $\mathcal{B}$ satisfies (B1),(B2'),(B3),(B4). \\
Now (B1) is easy: since $\mathbf{0}\in\mathcal{I}$ by (I1') and $E$ is finite-dimensional, we can find an element $B\in\mathcal{I}$ which is not properly contained in any other independent space. Then $B\in\mathcal{B}$ and hence $\mathcal{B}\neq\emptyset$. \\
(B2') is also easy: if there are sets $B_1,B_2\in\mathcal{B}$ with $\dim B_1<\dim B_2$, then, since $B_1,B_2\in\mathcal{I}$ and (I3), there is a $1$-dimensional subspace $x\subseteq B_2$, $x\not\subseteq B_1$, so that $B_1+x\in\mathcal{I}$.
But $B_1$ is a proper subspace of $B_1+x$, which contradicts the definition of $\mathcal{B}$, so $\dim B_1=\dim B_2$ and hence (B2'). \\
Next we show (B3). Let $B_1,B_2\in\mathcal{B}$. Let $A$ be a codimension $1$ subspace of $B_1$ with $B_1\cap B_2=A\cap B_2$.
Since $B_1,B_2\in\mathcal{I}$ and $A$ is a subspace of $B_1$, we have $A\in\mathcal{I}$ by (I2).
Apply (I3) to $A$ and $B_2$: since $\dim(A)<\dim B_2$ by (B2'), there is a $1$-dimensional subspace $y\subseteq B_2$,
$y\not\subseteq A$ such that $A+y\in\mathcal{I}$. Because $B_1\cap B_2\subseteq A$ we have that $y\not\subseteq B_1$. We show that $A+y$ is in $\mathcal{B}$.
Suppose not, then there is an $B_3\in\mathcal{B}$ such that $A+y\subset B_3$ and $\dim(A+y)=\dim B_1<\dim B_3$, which contradicts (B2').
So (B3) holds. \\
Finally, for (B4) it is enough to notice that by (I3) every independent space is contained in a basis. So a maximal independent subspace of $A\subseteq E$ is the same as a maximal intersection between a member of $\mathcal{B}$ and $A$. Then (B4) is just a re-formulation of (I4) in terms of bases instead of independent spaces. \\
$\bullet$ Part 2. Let $\mathcal{B}$ be a family of subspaces of a finite dimensional space $E$ satisfying (B1),(B2),(B3),(B4).
Define $\mathcal{I}=\{I:\exists B\in\mathcal{B}, I\subseteq B \}$. We need to show $\mathcal{I}$ satisfies (I1),(I2),(I3),(I4). \\
Since $\mathcal{B}\neq\emptyset$ by (B1) and $\mathcal{B}\subseteq\mathcal{I}$, it follows that $\mathcal{I}\neq\emptyset$ and thus (I1). \\
To verify (I2), we need to show that if $I^\prime\subseteq I$ for some $I\in\mathcal{I}$, then $I^\prime\in\mathcal{I}$.
By the construction of $\mathcal{I}$, we know $I\subseteq B$ for some $B\in\mathcal{B}$. But then $I^\prime\subseteq I\subseteq B$ and so $I^\prime\in\mathcal{I}$ and (I2). \\
Now we prove (I3). Let $I_1,I_2\in\mathcal{I}$ with $\dim I_1<\dim I_2$. We may assume without loss of generality that $I_2$ is a basis, by truncating the matroid sufficiently many times by Theorem \ref{t-trunc}. Now $I_1$ is contained in a basis $B_1$ and $I_2=B_2$ is a basis.
There exists a codimension $1$ subspaces $A$ of $B_1$ that contains $I_1$, since $\dim I_1<\dim I_2$. Furthermore, we can choose $A$ such that $B_1\cap I_2\subseteq A$.
Hence by (B3) there is a one dimensional subspace $y$ of $I_2$ such that $A+y$ is a basis and $\dim A+y = \dim I_2$.
Now $y$ is not contained in $A$, since $\dim A = \dim I_2 -1$.
Therefore $I_1+y \subseteq A+y$ and $A+y$ is independent.
So $I_1+y$ independent and $y$ is not contained in $I_1$.\\
Finally, for (I4) we have the same reasoning as in part 1: (I4) is a re-formulation of (B4) in terms of independent spaces instead of bases. \\
$\bullet$ Part 3. Given a family of subspaces $\mathcal{I}$ satisfying (I1),(I2),(I3),(I4) create a family
$\mathcal{B}=\{B\in\mathcal{I}:\forall B^\prime\in\mathcal{I}, B\subseteq B^\prime\Rightarrow B^\prime=B\}$.
Then use $\mathcal{B}$ to create a (possibly new) family
$\mathcal{I}^\prime=\{I:\exists B\in\mathcal{B}, I\subseteq B \}$. We want to show that $\mathcal{I}^\prime=\mathcal{I}$. Let $I\in\mathcal{I}$, then $I\subseteq B$ for some $B\in\mathcal{B}$ that is of maximal dimension, so immediately $I\in\mathcal{I}^\prime$. On the other hand, if $I^\prime\in\mathcal{I}^\prime$, then $I^\prime\subseteq B$ for some $B\in\mathcal{I}$ of maximal dimension. By (I2), $I^\prime\in\mathcal{I}$, so $\mathcal{I}^\prime=\mathcal{I}$. \\
Given a family $\mathcal{B}$ satisfying (B1),(B2),(B3),(B4) create a family $\mathcal{I}=\{I:\exists B\in\mathcal{B}, I\subseteq B\}$. Then let $\mathcal{B}^\prime$ be the members of $\mathcal{I}$ of maximal dimension, that is,
$\mathcal{B}^\prime=\{B\in\mathcal{I}:\forall B^\prime\in\mathcal{I},B\subseteq B^\prime\Rightarrow B^\prime=B\}$. We will show that $\mathcal{B}^\prime=\mathcal{B}$. Let $B\in\mathcal{B}$ and suppose $B\notin\mathcal{B}^\prime$. Then $B$ is not a member of $\mathcal{I}$ of maximal dimension, so $B\subsetneq B^\prime$ for some $B^\prime\in\mathcal{I}$, which contradicts (B2'). Hence $B\in\mathcal{B}^\prime$. If $B^\prime\in\mathcal{B}^\prime$, then $B^\prime\in\mathcal{I}$, so $B^\prime\subseteq B$ for some $B\in\mathcal{B}$ Since $B^\prime$ is a member of $\mathcal{I}$ of maximal dimension and $B\in\mathcal{I}$ as well, we get $B^\prime\in\mathcal{B}$ so $\mathcal{B}^\prime=\mathcal{B}$.
\end{proof}

\section{Duality}

\begin{definition}
Let $M=(E,r)$ be a matroid and let
\[ r^*(A)=\dim A-r(M)+r(A^\perp) \]
be an integer-valued function defined on the subspaces of $E$. Then $M^*=(E,r^*)$ is the \emph{dual} of the $q$-matroid $M$.
\end{definition}
We need to show that this definition is well-defined, so that the the dual of a $q$-matroid is again a $q$-matroid.

\begin{theorem}
The dual $q$-matroid is indeed a $q$-matroid, that is, the function $r^*$ satisfies (r1),(r2),(r3).
\end{theorem}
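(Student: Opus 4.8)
The plan is to verify (r1), (r2), and (r3) for $r^*$ directly from the definition, using three standard facts about the orthogonal complement in a finite-dimensional space equipped with a nondegenerate bilinear form: $\dim A^\perp = \dim E - \dim A$; the map $A\mapsto A^\perp$ reverses inclusions; and it interchanges sums with intersections, i.e. $(A+B)^\perp = A^\perp\cap B^\perp$ and $(A\cap B)^\perp = A^\perp+B^\perp$ (the latter follows from the former together with the dimension formula and modularity). The only ingredient beyond these and the properties (r1)--(r3) of $r$ itself will be Lemma \ref{unit-rank-increase}, applied repeatedly to control how much $r$ can grow when a space is enlarged by finitely many $1$-dimensional subspaces.

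For (r1): $r^*$ is integer-valued by inspection. The upper bound $r^*(A)\le\dim A$ is immediate, since $r(A^\perp)\le r(E)=r(M)$ by (r2). For the lower bound $r^*(A)\ge 0$, I would pick a complement $C$ of $A^\perp$ in $E$, so $\dim C=\dim A$ and $E = A^\perp + C$, and then apply Lemma \ref{unit-rank-increase} exactly $\dim A$ times to get $r(M)=r(E)\le r(A^\perp)+\dim A$, which rearranges to $r^*(A)\ge 0$.

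For (r2): if $A\subseteq B$ then $B^\perp\subseteq A^\perp$ with $\dim A^\perp-\dim B^\perp=\dim B-\dim A$, so Lemma \ref{unit-rank-increase} applied $\dim B-\dim A$ times gives $r(A^\perp)\le r(B^\perp)+(\dim B-\dim A)$. Substituting the definitions, $r^*(B)-r^*(A)=(\dim B-\dim A)-\bigl(r(A^\perp)-r(B^\perp)\bigr)\ge 0$.

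For (r3): using $(A+B)^\perp=A^\perp\cap B^\perp$, $(A\cap B)^\perp=A^\perp+B^\perp$, and modularity of dimension, one gets
\[ r^*(A+B)+r^*(A\cap B) = \dim A+\dim B-2r(M)+r(A^\perp\cap B^\perp)+r(A^\perp+B^\perp), \]
and then (r3) for $r$ applied to the pair $A^\perp,B^\perp$ bounds the last two terms by $r(A^\perp)+r(B^\perp)$, which after resubstituting is exactly $r^*(A)+r^*(B)$. I do not anticipate any serious obstacle; the one spot that needs care is the lower bound in (r1), where one must argue through a complement of $A^\perp$ rather than through $A$ itself, since the restriction of the form to $A$ may be degenerate (so $A+A^\perp$ need not be all of $E$).
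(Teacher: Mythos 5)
Your proof is correct and follows essentially the same route as the paper: (r2) via repeated application of Lemma \ref{unit-rank-increase} along a chain from $B^\perp$ up to $A^\perp$, and (r3) via the identities $(A+B)^\perp=A^\perp\cap B^\perp$, $(A\cap B)^\perp=A^\perp+B^\perp$ together with modularity of dimension. The only divergence is the lower bound in (r1): you argue directly through a complement of $A^\perp$ in $E$ (correctly noting that one cannot use $A$ itself, since the form restricted to $A$ may be degenerate), whereas the paper obtains it more quickly from $r^*(\mathbf{0})=0$ combined with the already-established (r2); both arguments are valid.
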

\begin{proof}
Let $A,B\subseteq E$. We start with proving (r2), so assume $A\subseteq B$. Then $B^\perp\subseteq A^\perp$. This means we can find independent vectors $x_1,\ldots,x_k$ such that $A^\perp=B^\perp+x_1+\cdots+x_k$, where $k=\dim A^\perp-\dim B^\perp$. By repeating Lemma \ref{unit-rank-increase} multiple times, we find that
\[ r(A^\perp)\leq r(B^\perp)+k=r(B^\perp)+\dim A^\perp-\dim B^\perp. \]
We have the following equivalent statements:
\begin{eqnarray*}
r(A^\perp) & \leq & r(B^\perp)+\dim A^\perp-\dim B^\perp \\
r(A^\perp)-\dim A^\perp & \leq & r(B^\perp)-\dim B^\perp \\
r(A^\perp)+\dim E-\dim A^\perp & \leq & r(B^\perp)+\dim E-\dim B^\perp \\
r(A^\perp)+\dim A & \leq & r(B^\perp)+\dim B
\end{eqnarray*}
Then it follows that
\begin{eqnarray*}
r^*(A) & = & \dim A-r(M)+r(A^\perp) \\
 & \leq & \dim B-r(M)+r(B^\perp) \\
 & = & r^*(B)
 \end{eqnarray*}
and we have proved (r2). For (r1), notice that $r^*(\mathbf{0})=0-r(M)+r(E)=0$ and by (r2) it follows that $0\leq r^*(A)$ for all $A\subseteq E$. The other inequality of (r1) is proved via
\begin{eqnarray*}
r^*(A) & = & \dim A-r(M)+r(A^\perp) \\
 & \leq & \dim A-r(M)+r(M) \\
 & = & \dim A.
\end{eqnarray*}
We show (r3) using the modularity of dimension and semimodularity of $r$:
\begin{eqnarray*}
\lefteqn{r^*(A+B)+r^*(A\cap B)} \\
 & = & \dim(A+B)+\dim(A\cap B)-2\cdot r(E)+r((A+B)^\perp)+r((A\cap B)^\perp) \\
 & = & \dim A+\dim B-2\cdot r(E)+r(A^\perp\cap B^\perp)+r(A^\perp+B^\perp) \\
 & \leq & \dim A+\dim B-2\cdot r(E)+r(A^\perp)+r(B^\perp) \\
 & = & r^*(A)+r^*(B).
\end{eqnarray*}
Now $r^*$ satisfies (r1),(r2),(r3), so we conclude that the dual $q$-matroid is indeed a $q$-matroid.
\end{proof}

\begin{remark}\label{r-dual}
In the definition of duality we use the orthogonal complement of a subspace, with respect to the standard inner product in $E$. We could, however, have chosen any nondegenerate bilinear form on $E$ to define duality. Choosing another inner product (bilinear form) will result in isomorphic duals.
\end{remark}

An easy consequence of the definition of duality is the following:

\begin{corollary}
The rank of the dual $q$-matroid is $\dim E-r(M)$.
\end{corollary}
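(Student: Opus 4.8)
The plan is to unwind the definition of the dual rank function at the whole ambient space $E$. By definition, the rank of the dual $q$-matroid $M^*=(E,r^*)$ is $r^*(M^*)=r^*(E)$, and the defining formula gives
\[ r^*(E)=\dim E - r(M) + r(E^\perp). \]
So the only thing to check is that $r(E^\perp)=0$.

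For this I would observe that the orthogonal complement is taken with respect to a nondegenerate bilinear form on $E$ (see Remark \ref{r-dual}), so $E^\perp=\mathbf{0}$, the zero subspace. By property (r1) applied to $\mathbf{0}$ we have $0\leq r(\mathbf{0})\leq\dim\mathbf{0}=0$, hence $r(E^\perp)=r(\mathbf{0})=0$. Substituting this into the displayed equation yields $r^*(E)=\dim E-r(M)$, which is exactly the claim.

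There is no real obstacle here: the result is an immediate substitution into the definition of $r^*$, using only the elementary facts that $E^\perp=\mathbf{0}$ and that any $q$-matroid rank function vanishes on the zero subspace. The statement is recorded separately only because it is used repeatedly later; its proof is a single line.
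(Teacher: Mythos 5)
Your proof is correct and matches the paper's argument exactly: both substitute $A=E$ into the definition of $r^*$, use $E^\perp=\mathbf{0}$, and conclude $r(\mathbf{0})=0$ from (r1). Nothing further to add.
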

\begin{proof}
We have $r^*(M)=r^*(E)=\dim E-r(M)+r(\mathbf{0})=\dim E-r(M)$ as was to be shown.
\end{proof}

We can also characterize the bases of the dual $q$-matroid.

\begin{theorem}
Let $M=(E,r)$ be a $q$-matroid with $\mathcal{B}$ as collection of bases, and let
\[ \mathcal{B}^*=\{B^\perp:B\in\mathcal{B}\}. \]
Then the dual $q$-matroid $M^*$ has $\mathcal{B}^*$ as collection of bases.
\end{theorem}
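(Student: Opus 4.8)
The plan is to argue directly from the rank-function definition of a basis together with the explicit formula $r^*(A)=\dim A-r(M)+r(A^\perp)$, using only the elementary linear-algebra facts $(A^\perp)^\perp=A$ and $\dim A^\perp=\dim E-\dim A$, and the Corollary above that $r^*(M)=\dim E-r(M)$. Recall that a subspace is a basis of a $q$-matroid exactly when it is independent and its rank equals the rank of the whole matroid; since independence means rank equals dimension, every basis of $M$ has dimension $r(M)$ and every basis of $M^*$ has dimension $\dim E-r(M)$.

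First I would show that every member of $\mathcal{B}^*$ is a basis of $M^*$. Let $B\in\mathcal{B}$, so $\dim B=r(B)=r(M)$. Then $\dim B^\perp=\dim E-r(M)=r^*(M)$, and
\[ r^*(B^\perp)=\dim B^\perp-r(M)+r\bigl((B^\perp)^\perp\bigr)=\dim B^\perp-r(M)+r(B)=\dim B^\perp . \]
Hence $B^\perp$ is independent in $M^*$ with $r^*(B^\perp)=r^*(M)=r^*(E)$, i.e.\ a basis of $M^*$.

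For the reverse inclusion, let $B^*$ be any basis of $M^*$. Independence in $M^*$ gives $r^*(B^*)=\dim B^*$, and being a basis gives $r^*(B^*)=r^*(E)=\dim E-r(M)$, so $\dim B^*=\dim E-r(M)$; substituting into the definition of $r^*$ then yields $r\bigl((B^*)^\perp\bigr)=r(M)$. But also $\dim (B^*)^\perp=\dim E-\dim B^*=r(M)$, so $(B^*)^\perp$ is independent in $M$ (its rank equals its dimension) and has rank $r(M)=r(E)$, hence is a basis of $M$, i.e.\ $(B^*)^\perp\in\mathcal{B}$. Since $B^*=\bigl((B^*)^\perp\bigr)^\perp$, this shows $B^*\in\mathcal{B}^*$, and therefore the family of bases of $M^*$ equals $\mathcal{B}^*$.

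I do not expect a genuine obstacle: the statement is an unwinding of the formula for $r^*$ and the involutivity of $\perp$. The only points needing a little care are the dimension bookkeeping — in particular invoking the Corollary $r^*(M)=\dim E-r(M)$ so that ``independent of the correct dimension'' is genuinely equivalent to ``basis'' — and observing at the outset that all bases of a $q$-matroid share one common dimension.
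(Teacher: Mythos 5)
Your proof is correct and follows essentially the same route as the paper: both arguments unwind the formula $r^*(A)=\dim A-r(M)+r(A^\perp)$ together with $(A^\perp)^\perp=A$ and $\dim A^\perp=\dim E-\dim A$ to show that $B$ is a basis of $M^*$ precisely when $B^\perp$ is a basis of $M$. The paper merely presents this as a single chain of equivalences rather than two separate inclusions, so there is nothing substantive to add.
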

\begin{proof}
The following statements are equivalent:
\begin{center}
\begin{tabular}{c}
$B$ is a basis of $M^*$ \\
$r^*(B)=\dim B=r^*(E)$ \\
$r(B^\perp)=r(M)$ and $\dim B=\dim E-r(M)$ \\
$r(B^\perp)=r(M)$ and $r(M)=\dim B^\perp$ \\
$B^\perp$ is a basis of $M$ \\
\end{tabular}
\end{center}
This proves that $M^*=(E,\mathcal{B}^*)$.
\end{proof}

This is a straightforward consequence of the theorem above:

\begin{corollary}
Let $M$ be a $q$-matroid. Then $(M^*)^*=M$.
\end{corollary}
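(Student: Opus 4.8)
The cleanest route is to apply the preceding theorem (characterizing the bases of a dual) twice and then invoke that a $q$-matroid is determined by its bases. Write $\mathcal{B}$ for the family of bases of $M$. Applying the theorem above to $M$, the bases of $M^*$ are $\mathcal{B}^*=\{B^\perp:B\in\mathcal{B}\}$. Applying the same theorem to $M^*$ (which is a $q$-matroid by the result two statements earlier), the bases of $(M^*)^*$ are $(\mathcal{B}^*)^*=\{(B^\perp)^\perp:B\in\mathcal{B}\}$.

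Next I would note that, since the standard inner product on the finite-dimensional space $E$ is nondegenerate, $(B^\perp)^\perp=B$ for every subspace $B\subseteq E$; hence $(\mathcal{B}^*)^*=\mathcal{B}$. So $M$ and $(M^*)^*$ have the same family of bases, and by Theorem \ref{indep-bases} the family of bases determines the $q$-matroid, so $(M^*)^*=M$.

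As an alternative I would carry out the direct computation on rank functions, which is equally short: using $r^*(M^*)=\dim E-r(M)$ (the corollary on the rank of the dual), the definition of $r^*$, the identity $(A^\perp)^\perp=A$, and $\dim A^\perp=\dim E-\dim A$, one gets
\[
r^{**}(A)=\dim A-r^*(M^*)+r^*(A^\perp)=\dim A-(\dim E-r(M))+\bigl(\dim A^\perp-r(M)+r(A)\bigr)=r(A)
\]
for every subspace $A\subseteq E$, so $r^{**}=r$ and $(M^*)^*=M$.

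I do not expect a real obstacle here: both ingredients, the basis description of the dual and the fact that bases (or the rank function) determine the $q$-matroid, are already in hand, and the only background fact used is the standard $(A^\perp)^\perp=A$ for a nondegenerate bilinear form on a finite-dimensional space, which one might wish to state explicitly since the field is arbitrary.
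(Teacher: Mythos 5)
Your proposal is correct and matches the paper's intent: the paper states this corollary as ``a straightforward consequence of the theorem above,'' i.e.\ exactly your first argument of applying the basis description $\mathcal{B}^*=\{B^\perp:B\in\mathcal{B}\}$ twice and using $(B^\perp)^\perp=B$. Your alternative computation on rank functions is also valid and your explicit caution about nondegeneracy of the bilinear form over an arbitrary field is a reasonable addition.
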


\begin{example}
Consider the uniform $q$-matroid $U_{r,n}$ from Example \ref{ex-uniform}. We know that its bases are all subspaces of dimension $r$. This means the bases of the dual are all subspaces of dimension $n-r$. Thus, the dual of $U_{r,n}$ is $U_{n-r,n}$.
\end{example}

We have discussed in Section \ref{sec-codes} that rank metric codes give rise to $q$-matroids.
We show that the $q$-matroid associated to the dual code is the same as the dual of the $q$-matroid associated to the code.

\begin{theorem}\label{thm-dualmatroidC}
Let $K\subseteq L$ be a finite Galois field extension and let $C\subseteq L^n$ be a rank metric code.
Let $M(C)$ be the $q$-matroid associated to the code $C$. Then $M(C)^*=M(C^\perp)$.
\end{theorem}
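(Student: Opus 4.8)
The plan is to prove the two sides are equal as $q$-matroids by showing they have the same rank function; since a $q$-matroid is by definition a pair $(E,r)$, this is enough. Write $r^{*}$ for the rank function of $M(C)^{*}$ and $r^{\perp}$ for the rank function of $M(C^{\perp})$, both defined on the $K$-subspaces of $E=K^{n}$. Set $k=\dim_{L}C$, so $\dim_{L}C^{\perp}=n-k$, and recall from the corollary computing $r(M(C))$ that $r(M(C))=k$. For a $K$-subspace $I\subseteq K^{n}$ of dimension $s$, let $I_{L}$ denote its $L$-linear span inside $L^{n}$; then $\dim_{L}I_{L}=s$, and since the standard bilinear form on $L^{n}$ is $L$-bilinear and restricts to the form on $K^{n}$, one checks the compatibility $(I^{\perp})_{L}=(I_{L})^{\perp}$ (the left-hand side is contained in the right-hand side, and both have $L$-dimension $n-s$). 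In particular $((J^{\perp})_{L})^{\perp}=(J_{L})^{\perp\perp}=J_{L}$ because the form on $L^{n}$ is nondegenerate.

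The first computation rewrites everything for $C$. By Lemma \ref{lJ2}, $C(I)=\{\mathbf{c}\in C:\mathbf{c}\cdot\mathbf{y}=0\text{ for all }\mathbf{y}\in I\}$, and since $C$ is $L$-linear and the form is $L$-bilinear, this equals $C\cap(I_{L})^{\perp}$; hence $l(I)=\dim_{L}\bigl(C\cap(I_{L})^{\perp}\bigr)$. Combining with Corollary \ref{cJ4} ($l(I)+r(I)=k$) and applying it to $I=J^{\perp}$ with $((J^{\perp})_{L})^{\perp}=J_{L}$ gives $r(J^{\perp})=k-\dim_{L}(C\cap J_{L})$, so
\[ r^{*}(J)=\dim J-r(M(C))+r(J^{\perp})=\dim J-\dim_{L}(C\cap J_{L}). \]

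The second computation does the same for $C^{\perp}$. Applying Corollary \ref{cJ4} to the code $C^{\perp}$ gives $r^{\perp}(J)=\dim_{L}(C^{\perp})_{J}=\dim_{L}C^{\perp}-\dim_{L}C^{\perp}(J)=(n-k)-\dim_{L}\bigl(C^{\perp}\cap(J_{L})^{\perp}\bigr)$, using the previous paragraph's formula for $C^{\perp}(J)$. Since $C^{\perp}\cap(J_{L})^{\perp}=(C+J_{L})^{\perp}$, the modularity of dimension in $L^{n}$ yields $\dim_{L}(C+J_{L})^{\perp}=n-\dim_{L}(C+J_{L})=n-k-\dim J+\dim_{L}(C\cap J_{L})$, hence $r^{\perp}(J)=\dim J-\dim_{L}(C\cap J_{L})=r^{*}(J)$. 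As $J$ was an arbitrary $K$-subspace of $E$, we conclude $r^{*}=r^{\perp}$ and therefore $M(C)^{*}=M(C^{\perp})$.

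The only genuinely delicate point I expect is the scalar-extension bookkeeping: keeping the $K$-linear subspace $J\subseteq K^{n}$ and its $K$-orthogonal complement $J^{\perp}$ distinct from the $L$-linear spaces $J_{L}$, $C$, $C^{\perp}$ living in $L^{n}$, and verifying $(J^{\perp})_{L}=(J_{L})^{\perp}$ so that the $\Rsupp$-conditions defining $C(J)$ really translate into the intersection-dimension identities used above. Once that is in place, the argument is just two applications of Corollary \ref{cJ4} together with the modularity of dimension.
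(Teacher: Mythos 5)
Your proposal is correct, and it takes a genuinely different route from the paper. The paper proves the theorem by comparing \emph{bases}: it picks a basis $B$ of $M(C)$, performs a $K$-linear coordinate change so that $B$ has generator matrix $(I_k\,|\,O)$ and $C$ has generator matrix $(I_k\,|\,P)$, reads off a parity check matrix $(-P^T\,|\,I_{n-k})$, and checks directly that $B^\perp$ is a basis of $M(C^\perp)$; the reverse inclusion then follows by applying duality and swapping $C$ with $C^\perp$. That argument leans on the earlier characterization of $\mathcal{B}(M^*)$ as $\{B^\perp : B\in\mathcal{B}(M)\}$ and on the fact that a $q$-matroid is determined by its bases. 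You instead compare \emph{rank functions} directly, which is the most primitive notion of equality here since a $q$-matroid is by definition the pair $(E,r)$. Your key observations --- that $C(I)=C\cap(I_L)^\perp$ via Lemma \ref{lJ2} and $L$-bilinearity, that $(I^\perp)_L=(I_L)^\perp$ by a containment-plus-dimension count, and that $C^\perp\cap(J_L)^\perp=(C+J_L)^\perp$ --- reduce both sides to the single closed formula $r^*(J)=r^\perp(J)=\dim J-\dim_L(C\cap J_L)$. What your approach buys is a coordinate-free argument with no generator-matrix normalization, no appeal to the basis cryptomorphism, and an explicit formula for the rank function of the dual that is of independent interest (it is the rank-metric analogue of the classical formula $r^*(J)=|J|-\dim_L(C\cap L^J)$ for Hamming-metric codes). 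What the paper's approach buys is that it stays entirely within the machinery it has already built ($C_J$, generator matrices, bases of the dual) and makes the duality of the two codes visible at the level of parity check matrices. The one point you correctly flag as delicate --- the scalar-extension bookkeeping and the identity $(J^\perp)_L=(J_L)^\perp$ --- is handled adequately: both spaces have $L$-dimension $n-\dim J$ because a matrix with entries in $K$ has the same rank over $K$ and over $L$, and the containment is immediate from bilinearity, so no gap remains.
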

\begin{proof}
We will show that both matroids have the same set of bases. Let $C$ be $k$ dimensional rank metric code over $L$ and let $G$ be a generator matrix of $C$.
A basis of $M(C)^*$ is of the form $B^\perp$ where $B$ is a basis of $M(C)$. Pick such a basis $B$ of $M(C)$, then $r(B)=r(M)=\dim B=k$.
After a $K$-linear coordinate change of $K^n$ we may assume without loss of generality that $B$ has generator matrix $Y=(I_k | O)$. (See Berger \cite{berger:2003} for more details on rank metric equivalence.) \\
Let $G=(G_1|G_2)$, where $G_1$ consists of the first $k$ columns of $G$ and $G_2$ consists of the last $n-k$ columns of $G$.
Then $GY^T=G_1$ is a generator matrix of $C_B$. Now $\dim_L(C_B)=k$, since $B$ is a basis. So $C_B=L^k$.
Hence, after a base change of $C$ we may assume without loss of generality that $C$ has generator matrix $G'=(I_k | P)$.
Therefore $H=(-P^T|I_{n-k})$ is a parity check matrix of $C$ and a generator matrix of $C^\perp$.
Now $Z=(O|I_{n-k})$ is a generator matrix of $B^\perp$ and $HZ^T=I_{n-k}$ is a generator matrix of $(C^\perp)_{B^\perp}$.
So $(C^\perp)_{B^\perp}=L^{n-k}$ and $B^\perp$ is a basis of $M(C^\perp)$. Therefore $\mathcal{B}(M(C)^*)\subseteq\mathcal{B}(M(C^\perp))$. \\
The other inclusion follows from using duality and replacing $C$ by $C^\perp$, leading to the following equivalent statements:
\begin{center}
\begin{tabular}{c}
$\mathcal{B}(M(C)^*)\subseteq\mathcal{B}(M(C^\perp))$ \\
$\mathcal{B}^*(M(C)^*)\subseteq\mathcal{B}^*(M(C^\perp))$ \\
$\mathcal{B}(M(C))\subseteq\mathcal{B}(M(C^\perp)^*)$ \\
$\mathcal{B}(M(C^\perp))\subseteq\mathcal{B}(M(C)^*)$ \\
\end{tabular}
\end{center}
We conclude that $\mathcal{B}(M(C)^*)=\mathcal{B}(M(C^\perp))$ and hence $M(C)^*=M(C^\perp)$.
\end{proof}

\begin{corollary}
The minimum rank distance $d_R(C)$ of a rank metric code $C$ is determined by $M(C)$. Moreover, rank metric codes that give rise to the same $q$-matroid will have the same minimum rank distance.
\end{corollary}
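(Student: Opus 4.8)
The plan is to exhibit $d_R(C)$ as a quantity computed purely from the rank function of $M(C)$. Write $M(C)=(E,r)$ with $E=K^n$, so $\dim E=n$, and recall from Section \ref{sec-codes} that $r(M(C))=\dim_L C=:k$ and that $r(J)=k-l(J)$ for every $K$-linear subspace $J$ of $K^n$ (Corollary \ref{cJ4}), where $l(J)=\dim_L C(J)$. The claim to prove is that
\[ d_R(C)=\dim E-\max\{\dim J:J\subseteq E,\ r(J)<r(M(C))\}. \]
The right-hand side only involves $E$ and $r$, so it depends on $M(C)$ alone; granting the formula, the ``moreover'' assertion is immediate, since two codes with the same $q$-matroid produce the same right-hand side.

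First I would translate the matroid condition $r(J)<r(M(C))$ into code language. By Corollary \ref{cJ4} it is equivalent to $l(J)\geq 1$, i.e.\ to $C(J)\neq\mathbf{0}$, i.e.\ by Lemma \ref{lJ2} to the existence of a nonzero codeword $\mathbf{c}\in C$ with $\Rsupp(\mathbf{c})\subseteq J^\perp$, equivalently with $J\subseteq\Rsupp(\mathbf{c})^\perp$. Recall also that $\wt_R(\mathbf{c})=\dim\Rsupp(\mathbf{c})$ and that $\dim\Rsupp(\mathbf{c})+\dim\Rsupp(\mathbf{c})^\perp=n$, since the orthogonal complement is taken in $K^n$.

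Then I would establish the formula by two inequalities, phrasing it as $m=n-d_R(C)$, where $m$ denotes the maximum above. For $m\geq n-d_R(C)$: choose a nonzero codeword $\mathbf{c}$ of minimum weight $d_R(C)$ and take $J=\Rsupp(\mathbf{c})^\perp$, which has dimension $n-d_R(C)$ and trivially satisfies $J\subseteq\Rsupp(\mathbf{c})^\perp$; hence $r(J)<r(M(C))$ by the translation, and $m\geq\dim J=n-d_R(C)$. For $m\leq n-d_R(C)$: take $J$ with $\dim J=m$ and $r(J)<r(M(C))$; then there is a nonzero $\mathbf{c}\in C$ with $J\subseteq\Rsupp(\mathbf{c})^\perp$, so $\wt_R(\mathbf{c})=n-\dim\Rsupp(\mathbf{c})^\perp\leq n-\dim J=n-m$, and since $\mathbf{c}\neq\mathbf{0}$ this gives $d_R(C)\leq n-m$. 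Combining the two yields $d_R(C)=\dim E-m$, as claimed, and the ``moreover'' part follows.

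There is no real obstacle here; the only point to watch is that passing from a subspace $J$ to a codeword $\mathbf{c}$ gives only the inequality $\wt_R(\mathbf{c})\leq n-\dim J$ rather than an equality, which is harmless because $d_R(C)$ is itself a minimum. Alternatively, one can bypass the explicit formula: by Theorem \ref{thm-dualmatroidC} we have $M(C^\perp)=M(C)^*$, which is determined by $M(C)$, and applying Proposition \ref{pJ} to the code $C^\perp$ shows that $t<d_R(C)$ if and only if every $t$-dimensional subspace of $K^n$ is independent in $M(C^\perp)$; since that condition is read off from $M(C)^*$, so is $d_R(C)$.
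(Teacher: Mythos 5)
Your argument is correct, but its main line is genuinely different from the paper's. The paper disposes of the corollary in one sentence by combining Theorem \ref{thm-dualmatroidC} ($M(C^\perp)=M(C)^*$) with Proposition \ref{pJ}: $t<d_R(C)$ holds if and only if every $t$-dimensional subspace is independent in $M(C^\perp)=M(C)^*$, so $d_R(C)$ is read off from the dual matroid --- this is precisely the alternative you sketch in your final sentence. Your primary argument instead extracts the explicit formula $d_R(C)=\dim E-\max\{\dim J: r(J)<r(M(C))\}$ directly from the rank function, using only Corollary \ref{cJ4}, Lemma \ref{lJ2} and the identity $\wt_R(\mathbf{c})=n-\dim\Rsupp(\mathbf{c})^\perp$; both inequalities are verified correctly, and your remark that passing from $J$ to a codeword yields only $\wt_R(\mathbf{c})\leq n-\dim J$ is exactly the right point to flag, since the minimum over codewords absorbs the slack. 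What your route buys is independence from duality --- no appeal to Theorem \ref{thm-dualmatroidC}, whose proof is the most involved of that section, nor to Gabidulin's theorem underlying Proposition \ref{pJ} --- together with a concrete formula that is the $q$-analogue of the classical expression of the minimum Hamming distance via the function $l(J)$; the paper's route buys brevity at the price of those two prerequisites. The only caveat, shared by both approaches, is the degenerate case $C=\mathbf{0}$, where the maximum runs over the empty set and $d_R(C)$ is undefined anyway.
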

\begin{proof}
This is a direct consequence of Proposition \ref{pJ} and Theorem \ref{thm-dualmatroidC}.
\end{proof}

\begin{example}
Let $L=\mathbb{F}_{8} $ and $K=\mathbb{F}_2 $. Let $a\in\mathbb{F}_{8}$ with $a^3=1+a$. Let $C^\perp$ be the rank metric code that is the dual of the code defined in Example \ref{ex-code}. It is generated by
\[ H=\left( \begin{array}{cccc}
a^2 & a & 1 & 0 \\
0 & 0 & 0 &1
\end{array}\right). \]
We have seen that $M(C)$ is the $q$-matroid we defined in Example \ref{ex-2}. Its bases are the $2$-dimensional subspaces of $E=\mathbb{F}_2^4$ that do not contain $\langle0001\rangle$. This means the bases of $M(C)^*$ are the $2$-dimensional subspaces of $E$ that do not have $\langle0001\rangle$ in their complement. We check that these spaces are indeed the bases of $M(C^\perp)$. As argued in Example \ref{ex-code}, we need to show that there are no nonzero codewords of $C^\perp$ such that $\Rsupp(\mathbf{c})\subseteq B$, where $B$ is a basis of $M(C)$ (which is the orthogonal complement of a basis of $M(C)^*$). But $\Rsupp(\mathbf{c})$ for a nonzero word of $C^\perp$ has either dimension $3$, because $a^2$, $a$ and $1$ are algebraically independent in $\mathbb{F}_8$, or it is a multiple of $\langle0001\rangle$. In both cases we can not have that $\Rsupp(\mathbf{c})\subseteq B$. So we find that the bases of $M(C^\perp)$ are the same as the bases of $M(C)^*$, hence the two $q$-matroids are the same.
\end{example}

We conclude this section with a definition we will need later.

\begin{definition}
A $1$-dimensional subspace that is not in the orthogonal complement of a basis is called an \emph{isthmus}.
\end{definition}

\begin{corollary}
Let $e$ be a loop of the $q$-matroid $M$. Then $e$ is an isthmus of the dual $q$-matroid $M^*$.
\end{corollary}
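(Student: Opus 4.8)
The plan is to unwind both definitions and reduce the statement to the assertion that a loop of $M$ is never contained in a basis of $M$.

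\emph{Step 1: translate the conclusion.} By the theorem characterizing the bases of the dual, the collection of bases of $M^*$ is exactly $\{B^\perp : B \text{ a basis of } M\}$. Since $E$ is finite dimensional and duality is taken with respect to a nondegenerate bilinear form, we have $(B^\perp)^\perp = B$ (this is the same identity that underlies $(M^*)^* = M$). Hence the orthogonal complements of the bases of $M^*$ are precisely the bases of $M$, and the definition of isthmus says that ``$e$ is an isthmus of $M^*$'' is equivalent to ``$e$ is contained in no basis of $M$''.

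\emph{Step 2: use the hypothesis, and close the argument.} Since $e$ is a loop, it is a $1$-dimensional dependent subspace of $M$, so by (r1) we have $r(e)=0$; equivalently $e \notin \mathcal{I}(M)$. Now suppose, for contradiction, that $e \subseteq B$ for some basis $B$ of $M$. A basis is independent, and by (I2) every subspace of an independent space is independent, so $e$ would be independent — contradicting that $e$ is a loop. (If one prefers a direct rank computation: write $B = e \oplus B'$ with $\dim B' = \dim B - 1$, and apply (r3) to $e$ and $B'$ together with $e \cap B' = \mathbf{0}$ and $r(e)=0$ to get $r(B) = r(e + B') \le r(e) + r(B') = r(B') \le \dim B - 1 < \dim B$, contradicting $r(B) = \dim B$.) Therefore $e$ lies in no basis of $M$, i.e.\ $e$ is an isthmus of $M^*$.

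I do not expect a genuine obstacle here. The only point needing a little care is Step 1: passing from ``orthogonal complement of a basis of $M^*$'' back to ``basis of $M$'' relies on $(B^\perp)^\perp = B$ for finite-dimensional $E$ with a nondegenerate form, which is exactly the fact already used implicitly throughout the duality section; everything else is an immediate consequence of the axioms (r1)/(r3) or equivalently of (I2).
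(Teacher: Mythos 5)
Your proof is correct, and it follows exactly the route the paper intends: the paper states this corollary without proof, as an immediate consequence of the theorem that $\mathcal{B}(M^*)=\{B^\perp: B\in\mathcal{B}(M)\}$ together with the fact that a loop, being dependent, lies in no basis. Your only added care (that $(B^\perp)^\perp=B$ for the nondegenerate form) is the same fact the paper already uses implicitly for $(M^*)^*=M$, so there is nothing to fix.
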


\section{Restriction and contraction}

\begin{definition}
Let $M=(E,r)$ be a $q$-matroid and let $H$ be a hyperplane of $E$ that contains at least one basis of $M$. Then the \emph{restriction} $M|_H$ is a $q$-matroid with ground space $H$ and rank function
\[ r_{M|_H}(A)=r_M(A) \]
defined on the subspaces $A\subseteq H$.
\end{definition}

Before proving that restriction is well defined, a remark on deletion. For ordinary matroids, deletion of an element $e$ is the same as restriction to the complement of $e$. For $q$-matroids, we could say that restriction to $H$ is the same as deletion of the $1$-dimensional subspace $e$ orthogonal to $H$. However, since $H$ might contain $e$, the term ``deletion of $e$'' is a bit misleading. Therefore we prefer to talk about restriction.

\begin{theorem}
The restriction $M|_H$ is indeed a $q$-matroid, that is, $r_{M|_H}$ satisfies (r1),(r2),(r3).
\end{theorem}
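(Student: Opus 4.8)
The plan is to verify directly that the function $r_{M|_H}(A) = r_M(A)$, defined on subspaces $A \subseteq H$, inherits properties (r1), (r2), (r3) from the rank function $r_M$ of the ambient $q$-matroid $M$. Since $r_{M|_H}$ is literally the restriction of $r_M$ to the lattice of subspaces of $H$, and all three axioms are universally quantified over subspaces, this should be essentially immediate: for (r1), given $A \subseteq H$ we have $0 \le r_M(A) \le \dim A$ because $A$ is also a subspace of $E$; for (r2), if $A \subseteq B \subseteq H$ then in particular $A \subseteq B$ as subspaces of $E$, so $r_M(A) \le r_M(B)$; for (r3), given $A, B \subseteq H$ the subspaces $A+B$ and $A \cap B$ are again subspaces of $H$ (here one uses that $H$ is a subspace, so it is closed under sums and intersections), and the submodular inequality $r_M(A+B) + r_M(A \cap B) \le r_M(A) + r_M(B)$ holds because it holds for all subspaces of $E$.

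So the only substantive content is not the axioms themselves but the fact that the definition is non-vacuous and sensible, namely that the hypothesis ``$H$ contains at least one basis of $M$'' is what guarantees $M|_H$ behaves like a genuine restriction with $r(M|_H) = r(M)$. I would first note that since $H$ contains a basis $B$ of $M$, we have $r_{M|_H}(H) = r_M(H) = r_M(B) = r(M)$, so the rank of the restricted matroid equals the rank of $M$; this is worth recording even if not strictly needed for (r1)--(r3), because it explains why we impose this hypothesis rather than allowing an arbitrary hyperplane. One could also remark that $r_{M|_H}$ is integer-valued on subspaces of $H$, which is inherited trivially.

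The main (and only) step that requires a word of justification is the closure of the subspace lattice of $H$ under the operations appearing in (r3): one must observe that for $A, B \subseteq H$ both $A+B$ and $A \cap B$ lie inside $H$, so that $r_{M|_H}$ is actually defined on them and the inequality transported from $E$ is a statement about $r_{M|_H}$ and not merely about $r_M$. This is where ``$H$ is a subspace'' (in fact a hyperplane) is used, and it is the one place the argument would fail if we tried to restrict to something that is not a subspace — which connects back to the earlier discussion in Section 1 about why the $q$-analogue of set difference is delicate. Beyond that observation there is no obstacle; the proof is a two-line verification that each axiom for $r_{M|_H}$ is a special case of the corresponding axiom for $r_M$, and I would present it in exactly that compressed form, perhaps together with the remark that, dually, one should expect a contraction operation obtained by passing to a quotient of $E$ (the quotient-space complement mentioned in Section 1).
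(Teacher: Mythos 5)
Your argument is correct and matches the paper's proof, which likewise observes that every subspace of $H$ is a subspace of $E$ so that $r_{M|_H}$ inherits (r1), (r2), (r3) directly from $r_M$. Your additional remarks (closure of the subspace lattice of $H$ under sums and intersections, and the role of the hypothesis that $H$ contains a basis) are sensible elaborations but do not change the approach.
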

\begin{proof}
For all $A\subseteq H$, we have that $A\subseteq E$. Hence the function $r_{M|_H}$ inherits the properties (r1),(r2),(r3) directly from $r_M$. We conclude that $M|_H$ is indeed a $q$-matroid.
\end{proof}

\begin{definition}
Let $M=(E,\mathcal{I})$ be a $q$-matroid and let $e$ be a $1$-dimension subspace of $E$ that is not a loop. Consider the projection $\pi:E\to E/e$. For every $A\subseteq E/e$, let $B$ be the unique subspace of $E$ such that $e\subseteq B$ and $\pi(B)=A$. Then the \emph{contraction} $M/e$ is a $q$-matroid with ground space $E/e$ and rank function
\[ r_{M/e}(A)=r_M(B)-1 \]
defined on the subspaces $A\subseteq E/e$.
\end{definition}

\begin{theorem}
The contraction $M/e$ is indeed a $q$-matroid, that is, $r_{M/e}$ satisfies (r1),(r2),(r3).
\end{theorem}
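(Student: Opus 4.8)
The plan is to verify the three rank axioms for $r_{M/e}$ directly, translating each statement about a subspace $A \subseteq E/e$ into a statement about its preimage $B = \pi^{-1}(A)$, which always contains $e$. The key bookkeeping fact is that $\dim B = \dim A + 1$ whenever $e \subseteq B$, and that the correspondence $A \mapsto B = \pi^{-1}(A)$ is inclusion-preserving and commutes with sums and intersections: if $A_1, A_2 \subseteq E/e$ with preimages $B_1, B_2$ (both containing $e$), then $\pi^{-1}(A_1 + A_2) = B_1 + B_2$ and $\pi^{-1}(A_1 \cap A_2) = B_1 \cap B_2$. The last identity uses crucially that both $B_i$ contain $e$; it is the one spot where a little care is needed.

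First I would prove (r1). Since $e$ is not a loop, Lemma \ref{unit-rank-increase} applied to $A'$ and $e$ (for suitable $A'$) together with (r2) gives $r_M(e) = 1$, and more generally $r_M(B) \geq r_M(e) = 1$ for any $B \supseteq e$, so $r_{M/e}(A) = r_M(B) - 1 \geq 0$. For the upper bound, $r_{M/e}(A) = r_M(B) - 1 \leq \dim B - 1 = \dim A$ by (r1) for $r_M$. Next, (r2): if $A_1 \subseteq A_2$ in $E/e$ then $B_1 \subseteq B_2$ in $E$, so $r_M(B_1) \leq r_M(B_2)$ by (r2) for $r_M$, and subtracting $1$ from both sides gives $r_{M/e}(A_1) \leq r_{M/e}(A_2)$.

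Finally, (r3). Writing $B_1 = \pi^{-1}(A_1)$, $B_2 = \pi^{-1}(A_2)$, both containing $e$, I would use the identities $\pi^{-1}(A_1 + A_2) = B_1 + B_2$ and $\pi^{-1}(A_1 \cap A_2) = B_1 \cap B_2$ (both of these subspaces still contain $e$) to compute
\begin{eqnarray*}
r_{M/e}(A_1 + A_2) + r_{M/e}(A_1 \cap A_2) & = & r_M(B_1 + B_2) - 1 + r_M(B_1 \cap B_2) - 1 \\
 & \leq & r_M(B_1) + r_M(B_2) - 2 \\
 & = & r_{M/e}(A_1) + r_{M/e}(A_2),
\end{eqnarray*}
where the middle inequality is (r3) for $r_M$. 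This completes the verification.

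The only genuinely delicate point is the set-theoretic identity $\pi^{-1}(A_1 \cap A_2) = B_1 \cap B_2$: the inclusion $\supseteq$ is automatic, while $\subseteq$ requires that a vector mapping into $A_1 \cap A_2$ lies in both $B_1$ and $B_2$, which holds precisely because $B_i = \pi^{-1}(A_i)$ is the \emph{full} preimage (equivalently, contains $e = \ker \pi$). I would state this correspondence as a short preliminary observation before running through (r1)--(r3), so that the three axiom checks themselves become one-line computations.
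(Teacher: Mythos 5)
Your proof is correct and follows essentially the same route as the paper: translate each axiom for $r_{M/e}$ into the corresponding statement about the preimages $B=\pi^{-1}(A)\supseteq e$, using that this correspondence preserves inclusions, sums, and intersections, and then apply (r1)--(r3) for $r_M$. If anything, you are more careful than the paper at the one delicate point, the identity $\pi^{-1}(A_1\cap A_2)=B_1\cap B_2$, which the paper justifies only by saying that $\pi$ preserves inclusion; your observation that it holds because the $B_i$ are full preimages (contain $\ker\pi=e$) is exactly the right justification.
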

\begin{proof}
Note that $\dim B=\dim A+1$. Because $e\subseteq B$ and $e$ is not a loop, $r_M(B)\geq1$ hence $r_{M/e}(A)\geq0$. Since $r_M(B)\leq\dim B=\dim A+1$, we have $r_{M/e}(A)\leq\dim A$. This proves (r1). For (r2), let $A_1\subseteq A_2\subseteq E/e$ with corresponding $B_1,B_2\subseteq E$. Then $B_1\subseteq B_2$ so $r_M(B_1)\leq r_M(B_2)$, and it follows that $r_{M/e}(A_1)\leq r_{M/e}(A_2)$. \\
For (r3), take $A_1,A_2\subseteq E/e$ with corresponding $B_1,B_2\subseteq E$. Since $\pi$ preserves inclusion, we have that $\pi(B_1\cap B_2)=\pi(B_1)\cap\pi(B_2)=A_1\cap A_2$, and because $\pi$ is a homomorphism, we have that $\pi(B_1+B_2)=\pi(B_1)+\pi(B_2)=A_1+A_2$. Hence
\begin{eqnarray*}
r_{M/e}(A_1+A_2)+r_{M/e}(A_1\cap A_2) & = & r_M(B_1+B_2)-1+r_M(B_1\cap B_2) \\
 & \leq & r_M(B_1)-1+r_M(B_2)-1 \\
 & = & r_{M/e}(A_1)+r_{M/e}(A_2).
\end{eqnarray*}
This proves (r3). We conclude that $M/e$ is indeed a $q$-matroid.
\end{proof}

Before we give examples, we describe the independent spaces of restriction and contraction.

\begin{theorem}
Let $M=(E,r)$ be a $q$-matroid. Let $e$ be a $1$-dimension subspace of $E$ and consider the projection $\pi:E\to E/e$. Then the the independent spaces of the restriction to $e^\perp$ and the contraction of $e$ are given by
\begin{itemize}
\item Restriction: $\mathcal{I}(M|_{e^\perp})=\{I\in\mathcal{I}(M):I\subseteq e^\perp\}$
\item Contraction: $\mathcal{I}(M/e)=\{\pi(I):I\in\mathcal{I}(M),e\subseteq I\}$
\end{itemize}
\end{theorem}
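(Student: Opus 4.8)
The plan is to prove each description directly from the definitions of restriction, contraction, and the rank function, using the characterization that a subspace $I$ is independent in a $q$-matroid precisely when $r(I) = \dim I$. Both statements are essentially unwindings of definitions, so I expect the proof to be short; the only mild subtlety is keeping track of the dimension shift in the contraction case and checking that $\pi$ behaves well on the relevant subspaces.

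For the restriction part, recall that $M|_{e^\perp}$ has ground space $e^\perp$ and rank function $r_{M|_{e^\perp}}(A) = r_M(A)$ for all $A \subseteq e^\perp$. First I would take a subspace $I \subseteq e^\perp$ and observe that $I \in \mathcal{I}(M|_{e^\perp})$ iff $r_{M|_{e^\perp}}(I) = \dim I$ iff $r_M(I) = \dim I$ iff $I \in \mathcal{I}(M)$. Since we are only considering subspaces $I \subseteq e^\perp$ in the first place, this gives exactly $\mathcal{I}(M|_{e^\perp}) = \{I \in \mathcal{I}(M) : I \subseteq e^\perp\}$. This direction requires essentially nothing beyond the definitions.

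For the contraction part, recall that $M/e$ has ground space $E/e$ and rank function $r_{M/e}(A) = r_M(B) - 1$, where $B$ is the unique subspace of $E$ with $e \subseteq B$ and $\pi(B) = A$; note $\dim B = \dim A + 1$. First I would show the inclusion $\supseteq$: given $I \in \mathcal{I}(M)$ with $e \subseteq I$, set $A = \pi(I)$, so $I$ is the subspace $B$ corresponding to $A$, and then $r_{M/e}(A) = r_M(I) - 1 = \dim I - 1 = \dim A$, so $\pi(I) \in \mathcal{I}(M/e)$. For the reverse inclusion $\subseteq$, take $A \in \mathcal{I}(M/e)$ and let $B$ be the corresponding subspace of $E$ with $e \subseteq B$, $\pi(B) = A$; then $r_M(B) - 1 = r_{M/e}(A) = \dim A = \dim B - 1$, so $r_M(B) = \dim B$, i.e. $B \in \mathcal{I}(M)$, and $A = \pi(B)$ with $e \subseteq B$. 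Since distinct subspaces of $E$ containing $e$ have distinct images under $\pi$, the sets $\{\pi(I) : I \in \mathcal{I}(M), e \subseteq I\}$ and $\mathcal{I}(M/e)$ coincide.

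The main (and only) obstacle worth flagging is being careful that the correspondence $A \leftrightarrow B$ between subspaces of $E/e$ and subspaces of $E$ containing $e$ is a bijection, so that ``$B$ is independent in $M$ containing $e$'' and ``$\pi(B)$ is independent in $M/e$'' are genuinely equivalent and no independent image is double-counted or missed; this is just the standard correspondence theorem for the quotient $\pi : E \to E/e$, and it is already implicit in the well-definedness of $M/e$ established above. Everything else is a one-line chain of equalities.
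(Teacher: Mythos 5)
Your proof is correct and follows essentially the same route as the paper: both parts are direct unwindings of the definitions of the restriction and contraction rank functions together with the characterization $r(I)=\dim I$ of independence. In fact your argument is slightly more complete than the paper's, since you explicitly verify both inclusions for the contraction case (the paper only spells out that an independent space of $M/e$ lifts to an independent space of $M$ containing $e$, leaving the converse implicit).
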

\begin{proof}
For restriction this is quite clear: a subspace $I$ is independent in $M|_{e^\perp}$ if $r_{M|_{e^\perp}}(I)=\dim I$. By definition, this means $r_M(I)=\dim I$, so $I$ is independent in $M$.
For contraction, let $I$ be an independent subspace of $M/e$. Let $J$ be the unique subspace of $E$ such that $\pi(J)=I$ and $e\subseteq J$. Then we have that
\[ \dim I=r_{M/e}(I)=r_M(J)-1\leq \dim J-1=\dim I, \]
so equality must hold everywhere. Hence $r_M(J)=\dim J$ and $J$ is independent in $M$.
\end{proof}

\begin{example}\label{ex-uniform-delcon}
Let $U_{k,n}$ be the uniform $q$-matroid of Example \ref{ex-uniform} and let $e$ be a $1$-dimensional subspace of $E$. Then the restriction $U_{k,n}|_{e^\perp}$ has as independent spaces all subspaces of dimension at most $k$ that are contained in $e^\perp$. So $U_{k,n}|_{e^\perp}=U_{k,n-1}$ for any $e$. The contraction $U_{k,n}/e$ has as independent subspaces all subspaces of dimension at most $k$ containing $e$, mapped to $E/e$. This gives all subspaces in $E/e$ of dimension at most $k-1$. So $U_{k,n}/e=U_{k-1,n-1}$ for any $e$.
\end{example}

\begin{example}
Let $M$ be the matroid of Example \ref{ex-2} and let $e=\langle(0,0,0,1)\rangle$. Then we can not contract $e$, since it is a loop. But we can restrict to $e^\perp$. The independent spaces that are contained in $e^\perp$ can not contain $e$, because $e$ is not in $e^\perp$. This means all subspaces of $e^\perp$ of dimension $2$ or less are independent in the restriction, hence $M|_{e^\perp}$ is the uniform matroid $U_{2,3}$.
\end{example}

From now on, we will always assume that we never restrict to a hyperplane that does not contain a bases, nor contract loops. So if we talk about $M|_{e^\perp}$ we will assume $e$ is not an isthmus and if we talk about $M/e$ we assume $e$ is not a loop. The following observations are necessary to prove that restriction and contraction are dual operations:

\begin{remark}
Since $e^\perp$ and $E/e$ are both vector spaces over the same field of the same dimension $r(M)-1$, they are isomorphic. We construct an explicit isomorphism as follows. Recall that that all subspaces of $E/e$ can be obtained by $\pi(A)$ with $A\subseteq E$ and $e\subseteq A$. This gives an isomorphism between the subspaces of $E$ that contain $e$ and the subspaces of $E/e$. On the other hand, for a subspace $A$ that contains $e$ we can take the orthogonal complement of $e$ inside $A$ by restricting the inner product of $E$ to $A$. The result is in $e^\perp$.
\end{remark}

\begin{definition}
We denote bij $\varphi:E/e\to e^\perp$ the isomorphism taking $\pi(A)$ to the orthogonal complement of $e$ in $A$. On $e^\perp$ we have a canonical inner product, which is the restriction of the inner product of $E$. We denote it by $\langle\mathbf{x},\mathbf{y}\rangle_{e^\perp}$. Using $\varphi$, we can use it to define an inner product (bilinear form) $\langle\mathbf{x},\mathbf{y}\rangle_{E/e}$ on $E/e$ given by $\langle\mathbf{x},\mathbf{y}\rangle_{E/e}=\langle\varphi(\mathbf{x}),\varphi(\mathbf{y})\rangle_{e^\perp}$.
\end{definition}

\begin{theorem}
Let $M$ a $q$-matroid and $e\subseteq E$ not a loop or isthmus. Then restriction and contraction are dual notions, that is, $M^*/e=(M|_{e^\perp})^*$ and $(M/e)^*=M^*|_{e^\perp}$.
\end{theorem}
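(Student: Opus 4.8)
The plan is to verify the two identities by comparing rank functions on the nose, using the isomorphism $\varphi:E/e\to e^\perp$ to transport ranks back and forth. Since $(M^*)^*=M$, the second identity $(M/e)^*=M^*|_{e^\perp}$ follows from the first identity $M^*/e=(M|_{e^\perp})^*$ applied to $M^*$ in place of $M$ (one must check that $e$ is not a loop or isthmus of $M^*$, but a loop of $M$ is an isthmus of $M^*$ and vice versa, so $e$ being neither for $M$ means $e$ is neither for $M^*$). So it suffices to prove $M^*/e=(M|_{e^\perp})^*$. Both sides are $q$-matroids on the same ground space (identifying $E/e$ with $e^\perp$ via $\varphi$, and noting that the duality on $e^\perp$ is taken with respect to $\langle\cdot,\cdot\rangle_{e^\perp}$, equivalently the one on $E/e$ with respect to $\langle\cdot,\cdot\rangle_{E/e}$, as in Remark~\ref{r-dual} this choice does not matter), so it is enough to show their rank functions agree on every subspace.

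First I would fix a subspace $\bar A\subseteq E/e$ and let $B$ be the unique subspace of $E$ with $e\subseteq B$ and $\pi(B)=\bar A$; write $A=\varphi(\bar A)\subseteq e^\perp$, so that $B=A+e$ and $\dim B=\dim A+1$. Unwinding the definition of contraction and then of the dual $q$-matroid $M^*$, the left-hand rank is
\[ r_{M^*/e}(\bar A)=r^*_M(B)-1=\dim B-r(M)+r_M(B^\perp)-1=\dim A-r(M)+r_M((A+e)^\perp). \]
For the right-hand side, $(M|_{e^\perp})^*$ has rank $r_{(M|_{e^\perp})^*}(A)=\dim A-r(M|_{e^\perp})+r_{M|_{e^\perp}}(A^{\perp_{e^\perp}})$, where $A^{\perp_{e^\perp}}$ is the orthogonal complement of $A$ taken inside $e^\perp$, and $r_{M|_{e^\perp}}$ is just $r_M$ restricted to subspaces of $e^\perp$. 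So I must show two things: that $r(M|_{e^\perp})=r(M)$, which holds because by hypothesis $e^\perp$ contains a basis of $M$; and the key geometric identity $r_M\big((A+e)^\perp\big)=r_M\big(A^{\perp_{e^\perp}}\big)$.

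The heart of the matter — and the step I expect to be the main obstacle — is this last identity. Geometrically, $(A+e)^\perp=A^\perp\cap e^\perp$ inside $E$, and one checks that this equals $A^{\perp_{e^\perp}}$: indeed a vector in $e^\perp$ is orthogonal to all of $A+e$ iff it is orthogonal to all of $A$ (it is automatically orthogonal to $e$), which is exactly membership in $A^{\perp_{e^\perp}}$. So the two spaces $(A+e)^\perp$ and $A^{\perp_{e^\perp}}$ are literally equal as subspaces of $E$, and hence have the same $r_M$-value with no further work. Assembling: $r_{M^*/e}(\bar A)=\dim A-r(M)+r_M(A^{\perp_{e^\perp}})=r_{(M|_{e^\perp})^*}(A)$ for all $\bar A$, which gives $M^*/e=(M|_{e^\perp})^*$. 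Finally, replacing $M$ by $M^*$ in this identity and taking duals of both sides, using $(M^*)^*=M$ and the fact that a basis of $M$ lies in $e^\perp$ iff its complement (a basis of $M^*$) contains $e$ — guaranteeing the restriction and contraction on the $M^*$ side are both defined — yields $(M/e)^*=M^*|_{e^\perp}$. The one subtlety to spell out carefully is the bookkeeping of which inner product is used where, so that the identifications $E/e\cong e^\perp$ and the duals taken on these small spaces are all mutually compatible; Remark~\ref{r-dual} makes this harmless up to isomorphism, but it is worth one sentence.
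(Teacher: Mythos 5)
Your proposal is correct, but it takes a genuinely different route from the paper. You prove the identity $M^*/e=(M|_{e^\perp})^*$ by showing the two rank functions agree on every subspace, the crux being the pointwise identity $(A+e)^\perp=A^\perp\cap e^\perp=A^{\perp_{e^\perp}}$ together with $r(M|_{e^\perp})=r(M)$ (which is exactly where the hypothesis that $e$ is not an isthmus enters). The paper instead compares the two $q$-matroids through their bases: it runs a chain of equivalences showing that $\pi(B)$ is a basis of $M^*/e$ if and only if $B^\perp$ is a basis of $M|_{e^\perp}$, and then identifies $\varphi(\pi(B))$ with the orthogonal complement of $B^\perp$ inside $e^\perp$. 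The paper's argument is shorter but implicitly relies on the fact that the family of bases determines the $q$-matroid (Theorem \ref{indep-bases}); yours works directly with the primary definition, makes the accounting of $\dim B=\dim A+1$ and the two rank formulas completely explicit, and makes visible why each of the hypotheses on $e$ is needed (non-isthmus for $r(M|_{e^\perp})=r(M)$ and for $M^*/e$ to be defined, non-loop for the dualized statement). Both proofs derive the second identity from the first in the same way, by substituting $M^*$ for $M$ and dualizing. One caveat you share with the paper: the identification $B=A+e$ via $\varphi$ and the use of $A^{\perp_{e^\perp}}$ as the complement in the dual of $M|_{e^\perp}$ tacitly assume that the inner product of $E$ restricts to a nondegenerate form on $e^\perp$, i.e.\ that $e\not\subseteq e^\perp$; over finite fields this can fail, and neither your argument nor the paper's addresses it, so it is not a defect specific to your write-up.
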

\begin{proof}
First, recall from Remark \ref{r-dual} that duality does not depend on the chosen inner product. We have the following equivalent statements:
\begin{center}
\begin{tabular}{c}
$\pi(B)\subseteq E/e$ is a basis of $M^*/e$ \\
$B\subseteq E$ is a basis of $M^*$ and $e\subseteq B$ \\
$B^\perp\subseteq E$ is a basis of $M$ and $B^\perp\subseteq e^\perp$ \\
$B^\perp\subseteq e^\perp$ is a basis of $M|_{e^\perp}$ \\
\end{tabular}
\end{center}
On the other hand, we have that $\varphi(\pi(B))\subseteq e^\perp$ and this is the orthogonal complement of $B^\perp$ in $e^\perp$. This shows that a basis in $M^*/e$ is isomorphic to a basis in $(M|_{e^\perp})^*$ and hence $M^*/e=(M|_{e^\perp})^*$. For the other equality, use duality and replace $M$ by $M^*$ to get the following equivalent statements:
\begin{eqnarray*}
M^*/e & = & (M|_{e^\perp})^* \\
(M^*/e)^* & = & M|_{e^\perp} \\
(M/e)^* & = & M^*|_{e^\perp}
\end{eqnarray*}
This proves that restriction and contraction are dual operations.
\end{proof}

\section{Towards more cryptomorphisms}\label{sec-morecrypt}

An important strength of ordinary matroids is that they have so may cryptomorphic definitions. For $q$-matroids we already saw a definition in terms of the rank function, independent spaces, and bases. We saw that taking the $q$-analogue of two cryptomorphic definitions of a matroid can result in statements that are not cryptomorphic. In this section we lay some ground work for more cryptomorphisms.

\subsection{Circuits}

\begin{definition}
Let $M=(E,\mathcal{I})$ be a $q$-matroid and let $C\subseteq E$. Then $C$ is a \emph{circuit} of $M$
if $C$ is a dependent subspace of $E$ and every proper subspace of $C$ is independent.
\end{definition}

\begin{example}
Let $U_{k,n}$ be the uniform $q$-matroid of Example \ref{ex-uniform}. Its circuits are all the subspaces of $E$ of dimension $k+1$.
\end{example}

\begin{example}
Let $M$ be the $q$-matroid of Example \ref{ex-2}. Its circuits are the $3$-dimensional spaces not containing $\langle0001\rangle$ and the $2$ dimensional spaces that do contain $\langle0001\rangle$.
\end{example}

The circuits of a $q$-matroid satisfy the following properties.

\begin{theorem}
Let $M=(E,\mathcal{I})$ be a $q$-matroid and $\mathcal{C}$ its family of circuits. Then $\mathcal{C}$ satisfies:
\begin{itemize}
\item[(C1)] $\mathbf{0}\notin\mathcal{C}$
\item[(C2)] If $C_1,C_2\in\mathcal{C}$ and $C_1\subseteq C_2$, then $C_1=C_2$.
\item[(C3)] If $C_1,C_2\in\mathcal{C}$ distinct and $x\subseteq C_1\cap C_2$ a $1$-dimensional subspace, then there is a $C_3\subseteq C_1+C_2$ with $x\not\subseteq C_3$ so that $C_3\in\mathcal{C}$.
\end{itemize}
\end{theorem}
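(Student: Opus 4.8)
The plan is to verify each of (C1), (C2), (C3) in turn, with the first two being essentially immediate from the definition of a circuit and the matroid axioms, and the third being the substantive part requiring a $q$-analogue of the classical circuit elimination argument.

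For (C1): the zero space $\mathbf{0}$ has $r(\mathbf{0})=0=\dim\mathbf{0}$ by (r1), so it is independent and hence not a circuit. For (C2): if $C_1\subseteq C_2$ are both circuits and $C_1\neq C_2$, then $C_1$ is a proper subspace of $C_2$, so by definition of a circuit $C_1$ must be independent; but $C_1$ is a circuit and hence dependent, a contradiction. So $C_1=C_2$.

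For (C3), the approach I would take mirrors the usual matroid proof but must be handled carefully since the $q$-analogue of set difference is not a subspace. Let $C_1, C_2$ be distinct circuits and $x$ a $1$-dimensional subspace of $C_1\cap C_2$. The first step is to establish, using the rank axioms, that $r(C_1+C_2)\leq \dim(C_1+C_2)-1$, i.e.\ $C_1+C_2$ is dependent; indeed, since $C_1$ is dependent, $r(C_1)\leq\dim C_1-1$, and since a circuit minus a hyperplane is independent we have $r(C_1\cap C_2)=\dim(C_1\cap C_2)$ (because $C_1\cap C_2$ is a proper subspace of $C_1$, unless $C_1\subseteq C_2$ which forces $C_1=C_2$ by (C2), contradiction). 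Then by (r3),
\begin{eqnarray*}
r(C_1+C_2) &\leq& r(C_1)+r(C_2)-r(C_1\cap C_2) \\
&\leq& (\dim C_1 - 1) + (\dim C_2 - 1) - \dim(C_1\cap C_2) \\
&=& \dim(C_1+C_2) - 2.
\end{eqnarray*}
Next I would argue that the space $(C_1+C_2)$ contains a dependent subspace avoiding $x$: take a hyperplane $H$ of $C_1+C_2$ with $x\not\subseteq H$ (possible since $x$ is $1$-dimensional and $\dim(C_1+C_2)\geq 2$, as $C_1\neq C_2$). By Lemma~\ref{unit-rank-increase} applied to $H$ and $x$, $r(C_1+C_2)\leq r(H)+1$, so $r(H)\geq r(C_1+C_2)-1\geq \dim(C_1+C_2)-3 = \dim H - 2$... wait, that bound is too weak to conclude $H$ is dependent. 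Instead the correct move is: suppose for contradiction that every subspace of $C_1+C_2$ avoiding $x$ is independent, i.e.\ contains no circuit. Then in particular every hyperplane of $C_1+C_2$ not containing $x$ is independent, so has rank equal to its dimension $\dim(C_1+C_2)-1$. But $C_1+C_2=H+x$ for any such $H$, so by Lemma~\ref{unit-rank-increase}, $r(C_1+C_2)\leq r(H)+1=\dim(C_1+C_2)$; more usefully, since $r(H)=\dim H$ for \emph{all} such hyperplanes $H$, and these hyperplanes together with $x$ span everything, an argument via Proposition~\ref{p-rank1} or a direct semimodularity estimate should force $r(C_1+C_2)\geq\dim(C_1+C_2)-1$, contradicting the bound $r(C_1+C_2)\leq\dim(C_1+C_2)-2$ obtained above. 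Hence some subspace $D\subseteq C_1+C_2$ with $x\not\subseteq D$ is dependent, and shrinking $D$ to a minimal dependent subspace gives a circuit $C_3\subseteq C_1+C_2$ with $x\not\subseteq C_3$ (since any subspace of $D$ still avoids $x$).

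The main obstacle is the middle step: getting from ``every $x$-avoiding subspace of $C_1+C_2$ is independent'' to a contradiction with $r(C_1+C_2)\leq\dim(C_1+C_2)-2$. The cleanest route is probably to pick two distinct hyperplanes $H_1, H_2$ of $C_1+C_2$ both avoiding $x$ (these exist when $\dim(C_1+C_2)\geq 2$ and the field has more than... actually over $\mathbb{F}_2$ with $\dim = 2$ there may be only one such hyperplane, so one must be slightly careful, but note $C_1\cap C_2\supseteq x$ so $\dim(C_1+C_2)=\dim C_1+\dim C_2-\dim(C_1\cap C_2)$ and since $C_1,C_2$ are distinct circuits each of dimension $\geq 2$... in fact $\dim(C_1+C_2)\geq 3$ in the problematic case, giving enough hyperplanes). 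Then $H_1\cap H_2$ has codimension $2$, $H_1+H_2=C_1+C_2$, and applying (r3) to $H_1,H_2$ together with $r(H_i)=\dim H_i$ yields $r(C_1+C_2)\geq \dim H_1+\dim H_2 - r(H_1\cap H_2)\geq \dim H_1+\dim H_2-\dim(H_1\cap H_2)=\dim(C_1+C_2)$, contradicting $r(C_1+C_2)\leq\dim(C_1+C_2)-2$. I would verify the hyperplane-counting edge cases (small dimension, small field) separately, but expect they cause no real trouble because distinct circuits containing a common point force $\dim(C_1+C_2)$ to be comfortably large.
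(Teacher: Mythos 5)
Your proofs of (C1) and (C2) match the paper's. For (C3) you take a genuinely different, and in fact cleaner, route to the key estimate. The paper also assumes an independent hyperplane $D$ of $C_1+C_2$ avoiding $x$ and derives a contradiction, but it obtains the needed upper bound combinatorially: it extends a codimension-$1$ subspace $X$ of $C_1$ containing $C_1\cap C_2$ to a maximal independent subspace $Y$ of $C_1+C_2$ via (I3) and then argues about $\dim Y$. You instead get $r(C_1+C_2)\leq\dim(C_1+C_2)-2$ in one line from submodularity: $r(C_1+C_2)\leq r(C_1)+r(C_2)-r(C_1\cap C_2)\leq(\dim C_1-1)+(\dim C_2-1)-\dim(C_1\cap C_2)=\dim(C_1+C_2)-2$, using that $C_1\cap C_2$ is a proper subspace of $C_1$ (else (C2) is violated) and hence independent. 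This is the standard circuit-elimination argument for ordinary matroids and it transfers verbatim; it buys you a shorter proof that sidesteps the paper's somewhat delicate dimension count for $Y$. One correction, though: your closing step is both overcomplicated and, as written, incorrect. You do not need two hyperplanes, and (r3) applied to $H_1,H_2$ gives $r(H_1+H_2)\leq r(H_1)+r(H_2)-r(H_1\cap H_2)$, an \emph{upper} bound, not the lower bound $r(C_1+C_2)\geq\dim H_1+\dim H_2-r(H_1\cap H_2)$ you assert. The contradiction is immediate from monotonicity: if some hyperplane $H$ of $C_1+C_2$ with $x\not\subseteq H$ were independent, then (r2) gives $r(C_1+C_2)\geq r(H)=\dim(C_1+C_2)-1$, contradicting your bound. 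Such an $H$ always exists (the kernel of a linear functional on $C_1+C_2$ that does not vanish on $x$), so $H$ is dependent and any minimal dependent subspace of $H$ is the required circuit $C_3$ with $x\not\subseteq C_3$. With that one-line repair your argument is complete.
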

\begin{proof}
Since $\mathbf{0}$ is independent by (I1'), it is not a circuit and thus (C1) holds. (C2) follows from the definition of a circuit. \\
To show (C3), let $C_1,C_2\in\mathcal{C}$ with nontrivial intersection. The space $C_1+C_2$ is dependent, since it contains $C_1$ and $C_2$, so it has to contain at least one circuit. We have to prove that for a $1$-dimensional $x\subseteq C_1\cap C_2$ we have such a circuit that trivially intersects $x$. Consider a codimension $1$ subspace $D$ of $C_1+C_2$ that does not contain $x$. Then $\dim D=\dim(C_1+C_2)-1$. Assume that $D$ is independent. \\
Now we know that $C_1-C_2$ can not be empty, because then $C_1\subseteq C_2$ which violates (C2). Similarly, $C_2-C_1$ is nonempty. Let $X\subseteq C_1$ of codimension $1$ with $C_1\cap C_2\subseteq X$. Such an $X$ exists because $C_1-C_2$ is nonempty. $X$ is independent, because it is a proper subspace of a circuit. Use (I3) multiple times to extend $X$ to a maximal independent space in $C_1+C_2$, call it $Y$. Now $Y$ contains $C_1\cap C_2$, but it does not contain all of $C_1$ or $C_2$ by construction. So $\dim Y\leq(\dim C_1-1)+(\dim C_2-1)-\dim(C_1\cap C_2)\leq\dim(C_1+C_2)-2$. \\
We now have two independent spaces in $C_1+C_2$: $D$ and $Y$. But $\dim Y<\dim D$ contradicts the maximality of $Y$. So $D$ has to be dependent and we can find a circuit $C_3\subseteq D$ with $x\not\subseteq C_3$. This proves (C3).
\end{proof}

We can already say that these three properties (C1),(C2),(C3) will not be enough to determine a $q$-matroid, for the same reasons as mentioned in Remark \ref{r-axioms}. If we take the family of circuits of a $q$-matroid and embed them in a space of higher dimension, then the properties (C1),(C2),(C3) still hold, but Lemma \ref{loopsum} fails.

\subsection{Closure}

\begin{definition}
Let $M=(E,r)$ be a $q$-matroid. For all subspaces $A\subseteq E$ we define the \emph{closure} of $A$ as
\[ \cl(A)=\bigcup\{x\subseteq E: r(A+x)=r(A)\}. \]
So $\cl$ is a function from the subspaces of $E$ to the subspaces of $E$. If a subspace is equal to its closure, we call it a \emph{flat}.
\end{definition}

Note that the closure is in fact a subspace, by Proposition \ref{p-rank2} and (r3).

\begin{example}
Let $U_{r,n}$ be the uniform $q$-matroid of Example \ref{ex-uniform}. All subspaces of dimension at most $k-1$ are flats, since adding a $1$-dimensional subspace will increase the rank. The closure of a basis is the whole space $E$ -- in fact, this is true for any $q$-matroid.
\end{example}

\begin{example}
Let $M$ be the $q$-matroid of Example \ref{ex-2}. To find the closure of a $1$-dimensional space, we can always add the loop $\langle0001\rangle$.
\end{example}

The closure satisfies the following properties.

\begin{theorem}
Let $M=(E,r)$ be a $q$-matroid and $\cl$ its closure. Then $\cl$ satisfies for all $A,B\subseteq E$ and $1$-dimensional subspaces $x,y\subseteq E$:
\begin{itemize}
\item[(cl1)] $A\subseteq\cl(A)$
\item[(cl2)] If $A\subseteq B$ then $\cl(A)\subseteq\cl(B)$.
\item[(cl3)] $\cl(A)=\cl(\cl(A))$
\item[(cl4)] If $y\subseteq\cl(A+x)$ and $y\not\subseteq\cl(A)$, then $x\subseteq\cl(A+y)$.
\end{itemize}
\end{theorem}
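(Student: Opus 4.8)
The plan is to verify the four closure axioms in order, leaning on the rank-function properties (r1)--(r3) and on Propositions~\ref{p-rank1} and~\ref{p-rank2}. Throughout I will use the observation already noted after the definition: $\cl(A)$ is genuinely a subspace, because if $x,y$ are $1$-dimensional with $r(A+x)=r(A+y)=r(A)$ then $r(A+x+y)=r(A)$ by Proposition~\ref{p-rank2}, and more generally every $1$-dimensional subspace of $A+x+y$ then also has this property by (r2) and (r3); combined with Proposition~\ref{p-rank1} this shows $r(A+\cl(A))=r(A)$, a fact I expect to use repeatedly.

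For (cl1): if $x\subseteq A$ then $A+x=A$ so trivially $r(A+x)=r(A)$, hence every $1$-dimensional subspace of $A$ lies in $\cl(A)$, giving $A\subseteq\cl(A)$. For (cl2): assume $A\subseteq B$ and let $x$ be a $1$-dimensional subspace with $r(A+x)=r(A)$; I want $r(B+x)=r(B)$. Here I would invoke Proposition~\ref{p-rank1}-style reasoning: since $r(A+x)=r(A)$, applying (r3) to $B$ and $A+x$ together with $r(A+\cl(A))=r(A)$ and $A\subseteq B$ should force $r(B+x)\le r(B)$, and (r2) gives the reverse inequality. Actually the cleanest route is: $r(B+x)+r(B\cap(A+x))\le r(B)+r(A+x)=r(B)+r(A)$, and since $A\subseteq B\cap(A+x)$ we get $r(B\cap(A+x))\ge r(A)$, so $r(B+x)\le r(B)$; with (r2) this yields equality and hence $x\subseteq\cl(B)$.

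For (cl3): one inclusion is (cl1) applied to $\cl(A)$. For the other, I will show $r(\cl(A))=r(A)$ first (this is the consequence of Propositions~\ref{p-rank1}/\ref{p-rank2} mentioned above: $r(A+\cl(A))=r(A)$ and $A\subseteq\cl(A)$, so $r(\cl(A))=r(A)$). Then for any $1$-dimensional $x$ with $r(\cl(A)+x)=r(\cl(A))=r(A)$, I need $r(A+x)=r(A)$; this follows from (r2) since $A+x\subseteq\cl(A)+x$ and $r(A)\le r(A+x)\le r(\cl(A)+x)=r(A)$. Hence $\cl(\cl(A))\subseteq\cl(A)$.

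The main obstacle is (cl4), the exchange property. Suppose $y\subseteq\cl(A+x)$ and $y\not\subseteq\cl(A)$. From $y\not\subseteq\cl(A)$ and Lemma~\ref{unit-rank-increase} we get $r(A+y)=r(A)+1$. From $y\subseteq\cl(A+x)$ we get $r(A+x+y)=r(A+x)$. There are two cases according to whether $x\subseteq\cl(A)$ or not. If $x\subseteq\cl(A)$ then $r(A+x)=r(A)$, so $r(A+x+y)=r(A)$, but $r(A+y)=r(A)+1>r(A)$ contradicts (r2) since $A+y\subseteq A+x+y$; so this case cannot occur under the hypotheses, or rather it forces $y\subseteq\cl(A)$, contradiction --- so actually $x\not\subseteq\cl(A)$, giving $r(A+x)=r(A)+1$. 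Now the key computation: $r(A+x+y)=r(A+x)=r(A)+1$, while $r(A+y)=r(A)+1$ as well, and both contain $A$ with $r(A)$. I want to conclude $r(A+y+x)=r(A+y)$, i.e.\ that $x$ does not raise the rank of $A+y$; this is exactly $r(A+y+x)=r(A+y)$, which holds since both sides equal $r(A)+1$. Then $x\subseteq\cl(A+y)$, as desired. The delicate point to get right is ruling out the degenerate configurations and making sure the rank bookkeeping via (r1)--(r3) and Lemma~\ref{unit-rank-increase} is airtight, but no deep new idea beyond the two rank-propositions already established should be needed.
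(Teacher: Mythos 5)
Your proof is correct and follows essentially the same route as the paper: (cl1) from the definition, (cl2) by applying (r3) to get $r(B+x)\le r(B)$, (cl3) via $r(\cl(A))=r(A)$ and a squeeze with (r2), and (cl4) via Lemma~\ref{unit-rank-increase} and the chain $r(A)+1=r(A+y)\le r(A+y+x)=r(A+x)\le r(A)+1$. The only cosmetic differences are that the paper applies (r3) once to $B$ and $\cl(A)$ rather than pointwise to each $x$, and handles (cl4) without your explicit case split on whether $x\subseteq\cl(A)$.
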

\begin{proof}
Property (cl1) follows directly from the definition of closure. For (cl2), assume $A\subseteq B$. By (r3) we have that
\[ r(\cl(A)+B)+r(\cl(A)\cap B)\leq r(\cl(A))+r(B)=r(A)+r(B). \]
Because $A\subseteq\cl(A)\cap B$ we have by (r2) that $r(A)\leq r(\cl(A)\cap B)$. Combing gives that $r(\cl(A)+B)\leq r(B)$. On the other hand, $B\subseteq\cl(A)+B$, hence (r2) gives that $r(B)\leq r(\cl(A)+B)$. It follows that equality must hold, so $r(B)=r(\cl(A)+B)$ and therefore $B+\cl(A)\subseteq\cl(B)$. Finally, since $\cl(A)\subseteq\cl(A)+B$, it follows that $\cl(A)\subseteq\cl(B)$. \\
We prove (cl3) by proving the two inclusions. From (cl1) it follows that $\cl(A)\subseteq\cl(\cl(A))$. For the other inclusion, let $x\subseteq\cl(\cl(A))$ be a $1$-dimensional subspace. Then we have $r(\cl(A)+x)=r(\cl(A))+r(A)$. But by (r2), we have $r(\cl(A)+x)\geq r(A+x)\geq r(A)$, so equality must hold in throughout this statement. It follows that $x\subseteq\cl(A)$, hence $\cl(\cl(A)\subseteq\cl(A)$ and (cl3) is proved. \\
To prove (cl4), let $y\subseteq\cl(A+x)$ and $y\not\subseteq\cl(A)$. Then $r(A+x+y)=r(A+x)$ and $r(A+y)\neq r(A)$, so by Lemma \ref{unit-rank-increase} it follows that $r(A+y)=r(A)+1$. We have that
\[ r(A)+1=r(A+y)\leq r(A+y+x)=r(A+x)\leq r(A)+1, \]
so equality must hold everywhere. This means $r(A+y)=r(A+y+x)$, hence $x\subseteq\cl(A+y)$ and we have proved (cl4).
\end{proof}

It is not known if these properties (cl1),(cl2),(cl3),(cl4) are enough to completely determine a $q$-matroid.

\section{Further research directions}

We have established the definitions and several basic properties of $q$-matroids. However, this is just the beginning of the research: in potential, all that is known about matroids could have a $q$-analogue. In this section we make a modest (and somewhat personal) wish-list on where to go next with the research in $q$-matroids. \\

In a late stadium, we learned about the work of Crapo \cite{crapo:1964} on a very closely related topic. Defining an ordinary matroid by its rank function can be viewed as assigning a rank to every element of a Boolean lattice, in such a way that the following properties hold:
\begin{itemize}
\item[(r1)] $0\leq r(A)\leq h(A)$
\item[(r2)] If $A\leq B$, then $r(A)\leq r(B)$.
\item[(r3)] $r(A\vee B)+r(A\wedge B)\leq r(A)+r(B)$
\end{itemize}
Here $h(A)$ is the hight of $A$ in the Boolean lattice, that is, the size of the subset. Join and meet in the Boolean lattice correspond to union and intersection. In this work, we assign a rank with the same properties to every element in a (finite) subspace lattice. The hight of an element in the subspace lattice is its dimension, and the equivalents of join and meet are sum and intersection. The work of Crapo generalises this idea: it turns out that for every complemented modular lattice, one can give the elements a rank function that satisfies the above properties. \\
So, this work on $q$-matroids can be viewed as a special case of the work of Crapo. Where Crapo's motivation and point of view are much more combinatorial, our work relies heavily on linear algebra and therefore might not be easily generalised. We strongly believe that a combination of the two approaches can greatly benefit the study of $q$-matroids. \\

There are many more ways to define matroids that probably have a $q$-analogue. For example in terms of circuits, flats, hyperplanes, or the closure function. First steps in this direction were taken in Section \ref{sec-morecrypt}. Another property of matroids that could have a $q$-analogue is that of connectivity and the direct sum. Special properties of matroids for which we want to decide there is a $q$-analogue include Pappus, Desargues and Vamos. \\

The motivation to study $q$-matroids comes from rank metric codes. There is a link between the weight enumerator of a linear code (in the Hamming metric) and the Tutte polynomial of the associated matroid. It can be established via the function $l(J)$. Can we do the same for $q$-matroids and rank metric codes? \\
To answer this question, we must first find the right definition of the Tutte polynomial. Originally, it was defined in terms of internal and external activity of bases of a matroid. It seems not so easy to do the same for $q$-matroids. A better place to start would be the rank generating polynomial:
\[ R_M(X,Y)=\sum_{A\subseteq E}X^{r(E)-r(A)}Y^{|A|-r(A)}. \]
First notice that in order to get a finite sum, we need $E$ to be a vector space over a finite field -- or maybe we need a different definition to begin with. In the case of a finite field the formula above has a straightforward $q$-analogue: just replace $|A|$ with $\dim A$. For normal matroids, this polynomial is equivalent to the Tutte polynomial. Greene \cite{greene:1976} was the first to prove the link between the Tutte polynomial and the weight enumerator. He used that both behave the same under deletion and contraction. How would that work in $q$-matroids? This is by no means straightforward. In ordinary matroids, we have that $|\mathcal{B}(M-e)|+|\mathcal{B}(M/e)|=|\mathcal{B}(M)|$, which can be used to show the relation between the Tutte polynomials (hence rank generating polynomials) of $M$, $M-e$ and $M/e$. For $q$-matroids, life is less pretty. $\mathcal{B}(M|_{e^\perp})$ comes from the bases of $M$ that are contained in $e^\perp$ while $\mathcal{B}(M/e)$ comes from the bases of $M$ that contain $e$. Because of self-duality in finite vector spaces, these families are not disjoint and also together they do not have to give all bases of $M$. \\
Another question regarding the Tutte polynomial, that looks easier to solve, is how it behaves under duality. \\

For linear error-correcting codes and matroids, the notions of puncturing and shortening of codes generalize to deletion and contraction in matroids. For rank metric codes, the operations of puncturing and shortening are studied in \cite{martinez-penas:2016}. Linking the notions of restriction and contraction of $q$-matroids and puncturing and shortening in rank metric codes should help to find a $q$-analogue for the proof of Greene \cite{greene:1976} of the link between the Tutte polynomial and the weight enumerator. \\

We can consider $q$-matroids that arise from rank metric codes as \emph{representable}, analogous to the case for normal matroids. Are all $q$-matroids representable? A big difference between normal matroids and $q$-matroids is that all uniform $q$-matroids are representable by MRD codes, as we have seen in Example \ref{ex-uniform-MRD}, and MRD codes are known tho exist for all parameters over finite fields \cite{gabidulin:1985}, in characteristic zero \cite{augot:2013}, as well as over rational function fields \cite{augot:2014}. \\

A very important reason why matroids are studies extensively, is that they are generalizations of many objects in discrete mathematics. It is interesting to see if this holds for $q$-matroids as well. It is known \cite{deza:1992} that Steiner systems give matroids, so called \emph{perfect matroid designs}: these are matroids where all flats of the same rank have the same size. Do $q$-ary Steiner systems, the $q$-analogue of Steiner systems, also give us a special kind of $q$-matroids? Currently, there is only one $q$-ary Steiner system known \cite{braun:2013}. Perfect matroid designs have been used to construct new Steiner systems. If a $q$-analogue of a perfect matroid design exists, it provides a new tool in the search for $q$-ary Steiner systems. \\

Matroids generalize graphs and graphs are an important class of matroids. For $q$-matroids, it is not clear if they generalize a $q$-analogue of a graph. We would expect that if such analogy exists, it follows directly from the notion of circuits of $q$-matroids. There are some results about $q$-Kneser graphs, see for example \cite{mussche:2009}, which are the $q$-analogues of Kneser graphs. But these $q$-Kneser graphs are still ``ordinary'' graphs, so it is unlikely that they play the role to $q$-matroids as graphs do for matroids. \\

To summarize, we think that one should study $q$-matroids for the same reasons one should study matroids. There are a lot of problems and questions regarding $q$-matroids waiting for interested researchers.

\section*{Acknowledgement}

This paper has been ``work in progress'' for quite some time. The authors would like to thank all colleagues who have discussed the subject with us over time. Specifically, the members of the COST action ``Random network coding and designs over GF(q)'' and the attendants of the 2016 International Workshop on Structure in Graphs and Matroids. We are grateful to Henry Crapo for sending us his thesis \cite{crapo:1964} and sharing his approach on the subject.

\bibliographystyle{plain}
\bibliography{qmatroid}

\end{document}